\numberwithin{equation}{section}
\theoremstyle{definition}
\newtheorem{Definition}{Definition}[section]
\newtheorem{Remark}[Definition]{Remark}
\newtheorem{Example}[Definition]{Example}
\theoremstyle{plain}
\newtheorem{Theorem}[Definition]{Theorem}
\newtheorem{Proposition}[Definition]{Proposition}
\newtheorem{Lemma}[Definition]{Lemma}
\newtheorem{Corollary}[Definition]{Corollary}
\newcommand{\R}{\mathbb R}
\newcommand{\N}{\mathbb N}
\newcommand{\spn}{\mathrm{span}}
\DeclareMathOperator{\core}{\text{core}}
\DeclareMathOperator{\Int}{Int}
\newcommand{\enumlabelformat}{\roman}
\newcommand{\enumlabelfont}[1]{#1}
\newlength{\thelabelsep}
\setlist{labelsep=\thelabelsep}
\setlist[enumerate,1]{font=\enumlabelfont,label=(\enumlabelformat*),leftmargin=2.5em}
\setlist[itemize]{leftmargin=2.5em,label=$-$}
\newcounter{inlineenum}
\renewcommand{\theinlineenum}{\enumlabelformat{inlineenum}}
\let\epsilon\varepsilon
\let\phi\varphi
\let\save@mathaccent\mathaccent
\newcommand*\if@single[3]{%
  \setbox0\hbox{${\mathaccent"0362{#1}}^H$}%
  \setbox2\hbox{${\mathaccent"0362{\kern0pt#1}}^H$}%
  \ifdim\ht0=\ht2 #3\else #2\fi
  }
\newcommand*\rel@kern[1]{\kern#1\dimexpr\macc@kerna}
\newcommand*\widebar[1]{\@ifnextchar^{{\wide@bar{#1}{0}}}{\wide@bar{#1}{1}}}
\newcommand*\wide@bar[2]{\if@single{#1}{\wide@bar@{#1}{#2}{1}}{\wide@bar@{#1}{#2}{2}}}
\newcommand*\wide@bar@[3]{%
  \begingroup
  \def\mathaccent##1##2{%
    \let\mathaccent\save@mathaccent
    \if#32 \let\macc@nucleus\first@char \fi
    \setbox\z@\hbox{$\macc@style{\macc@nucleus}_{}$}%
    \setbox\tw@\hbox{$\macc@style{\macc@nucleus}{}_{}$}%
    \dimen@\wd\tw@
    \advance\dimen@-\wd\z@
    \divide\dimen@ 3
    \@tempdima\wd\tw@
    \advance\@tempdima-\scriptspace
    \divide\@tempdima 10
    \advance\dimen@-\@tempdima
    \ifdim\dimen@>\z@ \dimen@0pt\fi
    \rel@kern{0.6}\kern-\dimen@
    \if#31
      \overline{\rel@kern{-0.6}\kern\dimen@\macc@nucleus\rel@kern{0.4}\kern\dimen@}%
      \advance\dimen@0.4\dimexpr\macc@kerna
      \let\final@kern#2%
      \ifdim\dimen@<\z@ \let\final@kern1\fi
      \if\final@kern1 \kern-\dimen@\fi
    \else
      \overline{\rel@kern{-0.6}\kern\dimen@#1}%
    \fi
  }%
  \macc@depth\@ne
  \let\math@bgroup\@empty \let\math@egroup\macc@set@skewchar
  \mathsurround\z@ \frozen@everymath{\mathgroup\macc@group\relax}%
  \macc@set@skewchar\relax
  \let\mathaccentV\macc@nested@a
  \if#31
    \macc@nested@a\relax111{#1}%
  \else
    \def\gobble@till@marker##1\endmarker{}%
    \futurelet\first@char\gobble@till@marker#1\endmarker
    \ifcat\noexpand\first@char A\else
      \def\first@char{}%
    \fi
    \macc@nested@a\relax111{\first@char}%
  \fi
  \endgroup
}
\newcommand\restr[2]{{
  \left.\kern-\nulldelimiterspace 
  #1 
  \vphantom{\big|} 
  \right|_{#2} 
  }}
\title{Topological cones and positively polarizable hyperbolic norms}
\author{Ethan Kharitonov\thanks{{\tt ethan.kharitonov@mail.utoronto.ca}, Department of Mathematics, University of Toronto, 40 St. George Street, M5S 2E4 Toronto, Ontario, Canada.}\\Argam Ohanyan \thanks{{\tt argam.ohanyan@utoronto.ca}, Department of Mathematics, University of Toronto, 45 St. George Street, M5S 2E5 Toronto, Ontario, Canada.}}
\newcommand{\inner}[2]{\langle #1, #2 \rangle}
\newcommand{\norm}[1]{\| #1\|}
\begin{document}

\date{\today}


\maketitle

\begin{abstract}

In the first part of this article, we study linear cones over totally ordered fields. We show that for each such cone there uniquely exists a universal vector space (called its spanned vector space) into which it embeds as a generating convex cone. Moreover, we investigate topologies on cones for which the natural cone operations are continuous, and study how these topologies carry over to the spanned vector space. In the second part, we deal with hyperbolic norms which satisfy a polarization identity and are defined on cones over the real numbers. We show that, under reasonable assumptions, such hyperbolic norms induce a Lorentzian inner product on the spanned vector space. Finally, we establish a link between completeness under the Wick rotation of a Lorentzian inner product and order-theoretic completeness.

\vspace{1em}

\noindent
\emph{Keywords:} Hyperbolic norm, Lorentzian vector space, linear cone
\medskip

\noindent
\emph{MSC2020:} 
46A40, 52A07, 57N17, 53B30
\end{abstract}
\tableofcontents

\section{Introduction}

The framework to study Lorentzian and spacetime geometry has recently undergone a dramatic expansion. Considerations from (mathematical) general relativity necessitate a detailed analysis of more general Lorentzian structures than just smooth metric tensors on manifolds. We refer to the excellent recent survey by Braun \cite{braun2025new} and the references therein for an overview of the recent developments in this direction.

Our focus of study in this work are hyperbolic norms, which were recently introduced by Beran, Braun, Calisti, Gigli, McCann, Rott, Sämann and the second author in \cite[App.\ A.3]{beran2024nonlinear} as a Lorentzian analogue for the usual notion of norm which arises in the context of positive definite inner products on vector spaces. In that work, hyperbolic norms are defined (roughly) as $1$-homogeneous nonnegative functions on a (real) vector space satisfying the reverse triangle inequality. Notably, the hyperbolic norm needs to be set to $-\infty$ outside of its conic nonnegativity set to guarantee the reverse triangle inequality everywhere. The authors then go on to show that, under reasonable assumptions, such hyperbolic norms (on a finite-dimensional vector space $V$ with $\dim V = n$) lead to globally hyperbolic metric spacetime structures \cite[Lem.\ A.10]{beran2024nonlinear} which satisfy the timelike curvature-dimension condition $\mathsf{TCD}_q(0,n)$ for every $0 \neq q < 1$. Further, the authors introduce the notion of positive polarizability for hyperbolic norms \cite[Eq.\ (A.14)]{beran2024nonlinear}, which is akin to a parallelogram identity, and show that positively polarizable hyperbolic norms always induce a bilinear pairing on their future (nonnegativity) cone which extends uniquely to an indefinite inner product on all of $V$ \cite[Lem.\ A.12]{beran2024nonlinear}. Moreover, \cite[Lem.\ A.13]{beran2024nonlinear} clarifies that the triangle inequality resp.\ reverse triangle inequality of a norm arising from a nondegenerate inner product on a finite-dimensional vector space characterizes that it is in fact positive definite resp.\ Lorentzian. 

Since the introduction in \cite{beran2024nonlinear}, hyperbolic norms have been further studied by Gigli. In \cite{gigli2025hyperbolic}, Gigli studies directed completions in the general setting of ordered spaces, in preparation for his upcoming comprehensive work \cite{gigli2025hyperbolic2} where he develops the functional-analytic theory of hyperbolic Banach spaces.

Our aim in this article is to continue the investigation into fundamental properties of hyperbolic norms, picking up where \cite[App.\ A.3]{beran2024nonlinear} left off. It appears to be natural to study hyperbolic norms which are defined on linear cones, rather than vector spaces (a point of view coming from Gigli \cite{gigli2025hyperbolic2}). To this end, we begin in Section \ref{Section: linearcones} by studying linear cones $F$ over an arbitrary totally ordered field $\mathbb{K}$. This means that $F$ is a cancellative, Abelian monoid for which scalar multiplication with nonnegative elements of $\mathbb{K}$ is defined, subject to usual compatibility axioms. It is well-known (see e.g.\ the textbook of Bruns--Gubeladze \cite[p.\ 50]{BrunsGubeladze}) that every cancellative Abelian monoid embeds into an Abelian group which extends the monoid in a universal way, known as its Grothendieck group. We show that the Grothendieck group of a cone $F$ over a totally ordered field becomes a $\mathbb{K}$-vector space satisfying a universal property in a natural way. We call that vector space the span of $F$ and denote it by $\spn(F)$. Our results in this context are Lemma \ref{Lemma: Uniqueness of spans} and Theorem \ref{Theorem: existenceofspans}.

We continue with the study of topological cones, i.e., cones such that the natural operations (addition and nonnegative scalar multiplication) are continuous (here, a totally ordered field is topologized via the order topology). We show the existence of a universal topology on $\spn(F)$ with which it becomes a topological vector space satisfying a universal property, see Theorem \ref{prop: universal topology of span}. In the rest of Section \ref{Section: linearcones}, we compare the universal topology with other (natural) choices of topologies on $\spn(F)$. In particular, we show that if $F$ carries the subspace topology of some norm topology on $\spn(F)$, then many of these natural choices of topologies on $\spn(F)$ coincide (see Theorem \ref{Theorem: normed cones quotient topology}).

We then move on to investigate positively polarizable hyperbolic norms defined on linear cones over $\R$ in Section \ref{Section: Positively polarizable hyperbolic norms}. In Theorem \ref{Theorem: innerproductnondegenerate} we show that, under natural conditions on the positively polarizable hyperbolic norm (finiteness, non-triviality and strict hyperbolicity), the induced inner product on $\spn(F)$ is non-degenerate. Moreover, in Theorem \ref{Theorem: innerproductLorentzian}, we give three sufficient conditions under which the inner product is Lorentzian. Let us remark that we do not assume $\spn(F)$ to be finite-dimensional in any of these results. Finally, in Theorem \ref{Theorem: Sequential foward completeness and Lorentz Hilbert}, we establish the equivalence between order completeness of $\spn(F)$ (where the order on $\spn(F)$ is defined by $x \leq y$ if and only if $y - x \in F$) and completeness under the (positive definite) Wick rotation of the Lorentzian inner product induced by the positively polarizable hyperbolic norm.

In Section \ref{Section: Outlook}, we summarize our work and describe some open problems related to the study of hyperbolic norms.

\section{Linear cones}\label{Section: linearcones}

\subsection{Linear cones and their spanned vector spaces}

\begin{Definition}[Totally ordered field]
A \emph{totally ordered field} is a field $\mathbb{K}$ together with a total ordering $\leq$ (i.e., for any two elements $\lambda, \mu \in \mathbb{K}$ either $\lambda > \mu$, $\lambda = \mu$, or $\lambda < \mu$; here $<$ means $\leq$ but not $=$) that is compatible with the field operations on $K$:
\begin{enumerate}
    \item[(i)] If $\mu \leq \lambda$, then for any $\nu \in \mathbb{K}$ also $\mu + \nu \leq \lambda + \nu$.
    \item[(ii)] If $\lambda, \mu \geq 0$, then also $\lambda \mu \geq 0$.
\end{enumerate}
\end{Definition}

It is easily observed that a totally ordered field $\mathbb{K}$ has characteristic $0$. In particular, there is an order preserving injective homomorphism of fields $\mathbb{Q} \hookrightarrow \mathbb{K}$, so $\mathbb{Q}$ can be understood to be a subfield of $\mathbb{K}$.

\begin{Remark}[Non-negative and non-positive elements]
    Every totally ordered field $\mathbb{K}$ decomposes as $\mathbb{K}_+ \cup \mathbb{K}_-$, where $\mathbb{K}_+$ consists of the nonnegative elements $\lambda \geq 0$ and $\mathbb{K}_-$ consists of the nonpositive elements $\mu \leq 0$, with $\mathbb{K}_+ \cap \mathbb{K}_- = \{0\}$. The set of nonnegative elements $\mathbb{K}_+$ is a commutative semigroup with addition, and $\mathbb{K}_+ \setminus \{0\}$ is a commutative semigroup with multiplication.
\end{Remark}

\begin{Definition}[Linear cone]
Let $(\mathbb{K},\leq)$ be a totally ordered field. A \emph{linear cone over} $\mathbb{K}$ is a commutative, cancellative monoid $(F,+)$ together with a \emph{scalar multiplication} $\mathbb{K}_+ \times F \to F$ such that for all $\lambda, \mu \in \mathbb{K}_+$, $v,w \in F$:
\begin{align}
    &\lambda(v + w) = \lambda v + \lambda w,\\
    &(\lambda + \mu)v = \lambda v + \mu v,\\
    &\lambda (\mu v) = (\lambda \mu)v,\\
    &1v = v.
\end{align}
Cancellativity automatically yields $0v = 0$ for all $v \in F$. We call a linear cone $F$ \emph{proper} (or \emph{pointed at} $0 \in F$) if it contains no negatives, i.e., no element $v \in F\setminus \{0\}$ has a negative in $F$.
\end{Definition}

\begin{Definition}[Linear maps]
Let $F$ be a linear cone over a totally ordered field $\mathbb{K}$. Let $W$ either be a linear cone over $\mathbb{K}$ or a $\mathbb{K}$-vector space. A map $f: F \to W$ is called $\mathbb{K}_+$-\emph{linear} if for all $\lambda, \mu \in \mathbb{K}_+$ and all $v,w \in F$
\begin{align}
    f(\lambda v + \mu w) = \lambda f(v) + \mu f(w).
\end{align}
\end{Definition}
There is a well-known algebraic construction that embeds a commutative, cancellative monoid in an Abelian group, called its \textit{Grothendieck group}, in a minimal way (see e.g.\ the discussion in \cite[p.50]{BrunsGubeladze}). We show that if a commutative cancellative monoid $F$ has the additional structure of a linear cone over a totally ordered field, then its Grothendieck group is a vector space in a natural way. For convenience, we give all of the details of the construction and do not rely on knowledge about Grothendieck groups.
\begin{Definition}[Span of a linear cone]
Let $F$ be a linear cone over a totally ordered field $\mathbb{K}$. We say that a $\mathbb{K}$-vector space $V$ together with a $\mathbb{K}_+$-linear map $\iota:F \to V$ is a \emph{span of} $F$ if it satisfies the following universal property: For every $\mathbb{K}$-vector space $W$ and every $\mathbb{K}_+$-linear map $f:F \to W$, there exists a unique $\mathbb{K}$-linear map $\Tilde{f}:V \to W$ such that $f = \Tilde{f} \circ \iota$. In the form of a commutative diagram, this reads:
\[
\begin{tikzcd}
F \arrow[dr, "\iota"'] \arrow[rr, "\forall \, f"] & & W \\
& V \arrow[ur, "\exists! \, \tilde{f}"'] &
\end{tikzcd}
\]
\end{Definition}

\begin{Lemma}[Uniqueness of spans]\label{Lemma: Uniqueness of spans}
Let $F$ be a linear cone over a totally ordered field $\mathbb{K}$. Then there exists at most one span of $F$ (up to unique isomorphism of $\mathbb{K}$-vector spaces).
\end{Lemma}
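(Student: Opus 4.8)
The plan is to run the standard argument that an object defined by a universal property is unique up to a unique isomorphism, applying it to the defining property of a span. Suppose $(V_1,\iota_1)$ and $(V_2,\iota_2)$ are both spans of $F$, where $\iota_1:F\to V_1$ and $\iota_2:F\to V_2$ are the given $\mathbb{K}_+$-linear maps. First I would produce comparison maps in both directions. Since $V_2$ is a $\mathbb{K}$-vector space and $\iota_2$ is $\mathbb{K}_+$-linear, the universal property of $(V_1,\iota_1)$ applied to $W=V_2$ and $f=\iota_2$ yields a unique $\mathbb{K}$-linear map $\varphi:V_1\to V_2$ with $\iota_2=\varphi\circ\iota_1$. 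Symmetrically, the universal property of $(V_2,\iota_2)$ applied to $W=V_1$ and $f=\iota_1$ yields a unique $\mathbb{K}$-linear map $\psi:V_2\to V_1$ with $\iota_1=\psi\circ\iota_2$.

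Next I would show that the two composites are identities, which is where the uniqueness clause of the universal property does the real work. Consider $\psi\circ\varphi:V_1\to V_1$. It is $\mathbb{K}$-linear and satisfies $(\psi\circ\varphi)\circ\iota_1=\psi\circ(\varphi\circ\iota_1)=\psi\circ\iota_2=\iota_1$. On the other hand, $\mathrm{id}_{V_1}$ is also a $\mathbb{K}$-linear map with $\mathrm{id}_{V_1}\circ\iota_1=\iota_1$. Invoking the uniqueness part of the universal property of $(V_1,\iota_1)$ for the choice $W=V_1$, $f=\iota_1$ (so that the factoring $\mathbb{K}$-linear map through $V_1$ is unique), we conclude $\psi\circ\varphi=\mathrm{id}_{V_1}$. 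The same argument applied to $(V_2,\iota_2)$ gives $\varphi\circ\psi=\mathrm{id}_{V_2}$. Hence $\varphi$ is an isomorphism of $\mathbb{K}$-vector spaces with inverse $\psi$, and it is compatible with the structure maps in the sense that $\iota_2=\varphi\circ\iota_1$.

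Finally I would record the uniqueness of this isomorphism: any $\mathbb{K}$-linear isomorphism $\varphi':V_1\to V_2$ satisfying $\iota_2=\varphi'\circ\iota_1$ is, by the uniqueness clause of the universal property of $(V_1,\iota_1)$ (again with $W=V_2$, $f=\iota_2$), forced to equal $\varphi$. This is exactly the assertion that the span is unique up to unique isomorphism. I do not expect any genuine obstacle here, as the argument is purely formal (category-theoretic) and uses nothing about $F$ beyond the universal property itself; the only point requiring care is to apply the \emph{uniqueness} half of the universal property correctly, comparing each composite against the identity rather than merely exhibiting mutually inverse maps, so that the compatibility with $\iota_1,\iota_2$ is retained throughout.
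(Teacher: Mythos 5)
Your proposal is correct and follows essentially the same argument as the paper: construct the comparison maps from the universal property in both directions, then use the uniqueness clause applied to $\iota_1$ and $\iota_2$ themselves to force both composites to be identities. The only (welcome) addition is that you explicitly record why the isomorphism is unique, which the paper leaves implicit.
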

\begin{proof} This is a standard argument using the universal property: Suppose $(V_1,\iota_1)$ and $(V_2,\iota_2)$ are both spans of $F$. Using that $(V_1,\iota_1)$ is a span of $F$, the choice of linear map $\iota_2:F \to V_2$ yields the existence of a unique linear map $\Psi:V_1 \to V_2$ such that $\Psi \circ \iota_1 = \iota_2$. Similarly, using that $(V_2,\iota_2)$ is a span of $F$, there exists a unique linear map $\Phi: V_2 \to V_1$ such that $\Phi \circ \iota_2 = \iota_1$. Hence $\Phi \circ \Psi \circ \iota_1 = \iota_1$, but once again invoking the universal property of the span $(V_1,\iota_1)$ for the linear map $\iota_1:F \to V_1$, uniqueness yields $\Phi \circ \Psi = id_{V_1}$. Similarly, $\Psi \circ \Phi = id_{V_2}$.
\end{proof}

\begin{Theorem}[Existence of spans]\label{Theorem: existenceofspans}
Let $F$ be a linear cone over a totally ordered field $\mathbb{K}$. Then there exists a unique span of $F$, denoted $(\spn(F),\iota)$. Moreover, the map $\iota: F \to \spn(F)$ is injective, and the image $\iota(F) \subseteq \spn(F)$ is a generating convex cone in $\spn(F)$, and $\spn(F) = \{v - w: v,w \in \iota(F)\}$. If $F$ is proper, then so is $\iota(F)$.
\end{Theorem}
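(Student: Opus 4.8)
The plan is to realize $\spn(F)$ concretely as the group of formal differences of elements of $F$, which is precisely the Grothendieck group construction, and then to upgrade it to a $\mathbb{K}$-vector space. Concretely, I would consider the set $F \times F$ and impose the equivalence relation $(v_1, w_1) \sim (v_2, w_2) \iff v_1 + w_2 = v_2 + w_1$, writing $[v,w]$ for the class of $(v,w)$ (to be thought of as the formal difference $v - w$). Reflexivity and symmetry are immediate, and transitivity uses the cancellativity of $F$ in an essential way. I would then set $\spn(F) := (F\times F)/{\sim}$ and define $\iota \colon F \to \spn(F)$ by $\iota(v) := [v,0]$.

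Next I would endow $\spn(F)$ with the structure of a $\mathbb{K}$-vector space. Addition is defined componentwise, $[v_1,w_1] + [v_2,w_2] := [v_1 + v_2, w_1 + w_2]$, with neutral element $[0,0]$ and additive inverse $-[v,w] = [w,v]$; this makes $\spn(F)$ an abelian group. For scalar multiplication I would use the decomposition $\mathbb{K} = \mathbb{K}_+ \cup \mathbb{K}_-$: for $\lambda \in \mathbb{K}_+$ set $\lambda \cdot [v,w] := [\lambda v, \lambda w]$, and for $\lambda \in \mathbb{K}_-$, writing $\lambda = -\mu$ with $\mu \in \mathbb{K}_+$, set $\lambda \cdot [v,w] := [\mu w, \mu v]$.

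The main (though essentially routine) obstacle lies here: one must check that every operation descends to equivalence classes and that all vector space axioms hold. Well-definedness of addition and of nonnegative scalar multiplication follows by applying cancellativity together with the cone axioms directly; the case of nonpositive scalars and, in particular, the distributivity law $(\lambda + \mu)x = \lambda x + \mu x$ when $\lambda$ and $\mu$ have opposite signs requires a short case analysis according to the sign of $\lambda + \mu$, each case reducing to the cone axioms over $\mathbb{K}_+$. I expect this bookkeeping, rather than any conceptual difficulty, to constitute the bulk of the work.

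Finally I would establish the remaining assertions. Injectivity of $\iota$ is immediate from the definition of $\sim$, and $\mathbb{K}_+$-linearity is a one-line check. Since $[v,w] = \iota(v) - \iota(w)$, we obtain $\spn(F) = \{v - w : v, w \in \iota(F)\}$ at once, and $\iota(F)$ is visibly closed under addition and nonnegative scaling, hence a generating convex cone. If $F$ is proper, then $\iota(v) = -\iota(w)$ forces $v + w = 0$ in $F$, so $v = w = 0$ by properness, showing $\iota(F)$ is proper. For the universal property, given a $\mathbb{K}_+$-linear $f \colon F \to W$, I would define $\tilde{f}([v,w]) := f(v) - f(w)$; well-definedness uses the additivity of $f$, the $\mathbb{K}$-linearity of $\tilde{f}$ is verified separately on $\mathbb{K}_+$ and $\mathbb{K}_-$, and $\tilde{f}\circ\iota = f$ holds because $f(0)=0$. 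Uniqueness of $\tilde{f}$ is forced by the fact that $\iota(F)$ generates $\spn(F)$, and the uniqueness of the span as a whole then follows from Lemma \ref{Lemma: Uniqueness of spans}.
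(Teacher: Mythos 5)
Your proposal is correct and follows essentially the same route as the paper: both construct $\spn(F)$ as the Grothendieck-group quotient $(F\times F)/\sim$ with $(v_1,w_1)\sim(v_2,w_2)\iff v_1+w_2=v_2+w_1$, extend the scalar multiplication by sign-splitting on $\mathbb{K}=\mathbb{K}_+\cup\mathbb{K}_-$, and verify the universal property via $\tilde f([v,w])=f(v)-f(w)$, with uniqueness delegated to Lemma \ref{Lemma: Uniqueness of spans}. If anything, you are slightly more explicit than the paper on the routine well-definedness checks and on why properness of $F$ transfers to $\iota(F)$.
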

\begin{proof}
    We have already shown uniqueness in Lemma \ref{Lemma: Uniqueness of spans}. To construct $\spn(F)$, we proceed in a manner similar to how one constructs the integers from the natural numbers: On $F \times F$, consider the relation
    \begin{align}
        (v_1,w_1) \sim (v_2,w_2) \quad :\Leftrightarrow \quad v_1 + w_2 = v_2 + w_1.
    \end{align}
    It is elementary to check that this is an equivalence relation (for this, it is essential that $F$ be cancellative). Define $\spn(F):=(F \times F) / \sim$. Denote the equivalence class of $(v,w)$ by $v-w$, with $v$ the equivalence class of $(v,0)$ and $-v$ the equivalence class of $(0,v)$. Then $\spn(F)$ becomes a $\mathbb{K}-$vector space with the scalar multiplication defined by $\lambda (v - w):= \lambda v - \lambda w$ for $\lambda \geq 0$, and $(-\lambda)(v - w):= \lambda w - \lambda v$ for $-\lambda \leq 0$. It is easy to check that $\spn(F)$ is a $\mathbb{K}$-vector space. The map $\iota:F \to \spn(F)$ sends $v \in F$ to $v \in \spn(F)$, i.e., the equivalence class of $(v,0)$, and is clearly injective. Moreover, since any element of $\spn(F)$ is of the form $v-w$ with $v,w \in F$, the span of $\iota(F)$ is all of $\spn(F)$. Let us now show that $(\spn(F),\iota)$ is a span of $F$: Given any $\mathbb{K}_+$-linear map $f:F \to W$, with $W$ a $\mathbb{K}$-vector space, its unique $\mathbb{K}$-linear extension $\Tilde{f}: \spn(F) \to W$ is given by $\Tilde{f}(v - w):= f(v) - f(w)$. This shows that $(\spn(F),\iota)$ satisfies the universal property, concluding the proof.
\end{proof}

\subsection{Topologies on linear cones and their span}

\begin{Remark}[Order topology on totally ordered fields]
Let $(\mathbb{K},\leq)$ be a totally ordered field. Then there is a natural order topology on $\mathbb{K}$ generated by the basis of open intervals $(a,b)$, $(-\infty,a)$ and $(a,\infty)$, for $a,b \in \mathbb{K}$. With this topology, all of the field operations $+:\mathbb{K}^2 \to \mathbb{K}$, $-:\mathbb{K} \to \mathbb{K}$, $^{-1}:\mathbb{K} \setminus \{0\} \to \mathbb{K} \setminus \{0\}, \cdot:\mathbb{K}^2 \to \mathbb{K}$ are continuous, i.e., $\mathbb{K}$ becomes a topological field. Whenever we speak of a totally ordered field, we will understand it to be equipped with the order topology.
\end{Remark}

\begin{Definition}[Topological linear cone]
A \textit{topological linear cone} over a totally ordered field $\mathbb{K}$ is a linear cone $F$ over $\mathbb{K}$, equipped with a topology such that addition $+:F \times F \to F$ and nonnegative scalar multiplication $\cdot: \mathbb{K}_{+} \times F \to F$ are continuous.
\end{Definition}

\begin{Lemma}[Existence of a solution set]\label{Lemma: existence of solution set}
    Let $F$ be a topological linear cone over a totally ordered field $\mathbb{K}$. There exists a set $S_{F} = (W_{i}, f_{i})_{i \in I}$ such that:
    \begin{enumerate}
        \item Each $W_{i}$ is a topological vector space over $\mathbb{K}$.
        \item Each $f_{i}: F \longrightarrow W_{i}$ is a continuous $\mathbb{K}_{+}$-linear map.
        \item For any topological vector space $W$ over $\mathbb{K}$ and any continuous $\mathbb{K}_{+}$-linear map $f: F \longrightarrow W$, there exists some $i \in I$ and a continuous $\mathbb{K}$-linear map $h: W_{i} \longrightarrow W$ such that $f = h \circ f_{i}$.
    \end{enumerate}
    Such a set $S_{F}$ is called a \textit{solution set} for $F$.
\end{Lemma}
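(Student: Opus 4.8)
The plan is to recognize this as the solution-set condition from the general adjoint functor theorem and to prove it by a factorization-plus-cardinality argument. The essential mathematical observation is that every continuous $\mathbb{K}_+$-linear map out of $F$ factors through the (algebraically) spanned subspace of its image, whose cardinality is controlled by that of $F$ and $\mathbb{K}$; everything else is set-theoretic bookkeeping to collect representatives into a genuine set rather than a proper class.

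First I would set up the factorization. Given any topological $\mathbb{K}$-vector space $W$ and any continuous $\mathbb{K}_+$-linear map $f : F \to W$, let $V := \spn_{\mathbb{K}}(f(F)) \subseteq W$ be the ordinary $\mathbb{K}$-linear span, equipped with the subspace topology inherited from $W$. Then $V$ is itself a topological $\mathbb{K}$-vector space (a linear subspace of a topological vector space, with the subspace topology, is again such), the inclusion $\iota_V : V \hookrightarrow W$ is continuous and $\mathbb{K}$-linear, and the corestriction $f^V : F \to V$ is continuous and $\mathbb{K}_+$-linear with $f = \iota_V \circ f^V$. Thus, up to factoring through an inclusion, it suffices to control the pairs $(V, f^V)$ in which the image of the cone spans the target.

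Next I would bound the cardinality. Every element of $V$ is a finite $\mathbb{K}$-linear combination of elements of $f(F)$, so $|V| \leq \kappa$, where $\kappa := \max(|F|, |\mathbb{K}|, \aleph_0)$ (using $|f(F)| \leq |F|$ and the fact that, for infinite $\kappa$, the set of finite sequences over a set of size $\kappa$ again has size $\kappa$). Now fix once and for all a set $X$ with $|X| = \kappa$, and let $S_F$ be the collection of all pairs $(W_i, f_i)$ such that the underlying set of $W_i$ is a subset of $X$, its vector-space operations, zero element and topology make $W_i$ a topological $\mathbb{K}$-vector space, and $f_i : F \to W_i$ is a continuous $\mathbb{K}_+$-linear map. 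Since each such datum (underlying subset of $X$, addition, scalar multiplication, topology viewed as a subset of the power set of $X$, and the function $f_i$) lives in a set built from $X$, $\mathbb{K}$ and $F$ by bounded power-set and function-space operations, the collection $S_F$ is a genuine set, and properties (i), (ii) hold by construction.

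Finally, to verify (iii), form $(V, f^V)$ from $(W,f)$ as above. Since $|V| \leq \kappa = |X|$, choose an injection $\phi : V \to X$ and transport the vector-space structure and topology of $V$ along $\phi$ onto the subset $\phi(V) \subseteq X$; this yields a pair $(W_i, f_i) \in S_F$ with $W_i = \phi(V)$ and $f_i := \phi \circ f^V$, and $\phi$ becomes simultaneously a linear isomorphism and a homeomorphism $V \to W_i$. Setting $h := \iota_V \circ \phi^{-1} : W_i \to W$, which is continuous and $\mathbb{K}$-linear, gives $h \circ f_i = \iota_V \circ \phi^{-1} \circ \phi \circ f^V = \iota_V \circ f^V = f$, as required. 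The main obstacle is purely foundational: guaranteeing that $S_F$ is a set rather than a proper class, which is exactly what the cardinality bound on $V$ together with the ``all structures on subsets of a fixed set $X$'' device secures. The topological content—that the spanned subspace with subspace topology is a topological vector space, and that transport of structure along a bijection preserves continuity and linearity—is routine.
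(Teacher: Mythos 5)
Your proposal is correct and follows essentially the same strategy as the paper's proof: factor $f$ through the subspace $\spn_{\mathbb{K}}(f(F))$ (which coincides with the image $\tilde f(\spn(F))$ used in the paper), bound its cardinality, collect all topological vector space structures on representatives of bounded cardinality into a set, and transport structure along a bijection to land in that set. The only cosmetic differences are that you use subsets of a single ambient set $X$ instead of a family $U_\alpha$ indexed by cardinals, and your cardinality bound $\max(|F|,|\mathbb{K}|,\aleph_0)$ differs from the paper's $|\spn(F)|\leq |F|^2$, but either bound suffices.
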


\begin{proof}
    Note first that we cannot take $S_{F}$ to be the collection of all topological vector spaces $W$ along with continuous $\mathbb{K}_{+}$-linear maps $f: F \longrightarrow W$ because this would be a proper class.
    
    Let $\kappa = |F|$ be the cardinality of $F$. Then $|\spn(F)| \leq \kappa^{2}$ (or is exactly $\kappa$ if $F$ is infinite). Let $\mu = |\spn(F)|$. For each cardinal $\alpha \leq \mu$ fix a set $U_{\alpha}$ of cardinality $\alpha$ (a choice of $U_{\alpha} = \alpha$ works). Let $\mathcal{TVS}(U_{\alpha})$ denote the collection of all topological vector spaces whose underlying set is $U_{\alpha}$. We know that $\mathcal{TVS}(U_{\alpha})$ is a set because the number of possible topologies on $U_{\alpha}$ is bounded by $2^{2^{\alpha}}$ and the number of possible linear structures making $U_{\alpha}$ into a topological vector space is also bounded in terms of $\alpha$. Any topological vector space whose underlying set has cardinality $\alpha$ is isomorphic to a space in $\mathcal{TVS}(U_{\alpha})$. Define
    $$\mathcal{W}_{rep} = \bigcup_{\alpha \leq \mu}\mathcal{TVS}(U_{\alpha}),$$
    where the union ranges over all cardinals $\alpha \leq \mu$ which is exactly the set $\mu + 1$. It follows that $\mathcal{W}_{rep}$ is also a set. Next, let
    $$S_{F} = \{(W, f): W \in \mathcal{W}_{rep},\; f: F \longrightarrow W \text{ continuous and } \mathbb{K}_{+}\text{-linear}\}.$$
    Note that $S_{F}$ is also a set. Now we show that $S_{F}$ is in fact the desired solution set. Given a topological vector space $W$ and a continuous $\mathbb{K}_{+}$-linear map $f: F \longrightarrow W$, let $\tilde{f}: \spn(F) \longrightarrow W$ be its unique $\mathbb{K}$-linear extension. The image $\tilde{f}(\spn(F))$ is a subspace of $W$ and when given the subspace topology becomes a topological vector space. Since the cardinality of $\tilde{f}(\spn(F))$ is at most $\mu$, $\tilde{f}(\spn(F))$ is isomorphic, as a topological vector space, to some $W' \in \mathcal{W}_{rep}$. Let $\phi: \tilde{f}(\spn(F)) \longrightarrow W'$ be an isomorphism. Since $f$ maps $F$ into the domain of $\phi$, that is, $f(F) \subseteq \tilde{f}(\spn(F))$, we may define $f' = \phi \circ f: F \longrightarrow W'$. Since both $f$ and $\phi$ are $\mathbb{K}_{+}$-linear and continuous, by definition $(W', f') \in S_{F}$. and letting $j: \tilde{f}(\spn(F)) \longrightarrow W$ be the inclusion map, we get 
    $$f = j \circ f = (j \circ \phi^{-1}) \circ f'.$$
    Here $j \circ \phi^{-1}$ is continuous and $\mathbb{K}$-linear as desired.
\end{proof}

\begin{Theorem}[Construction of the universal topology]\label{prop: universal topology of span}
    Let $F$ be a topological cone over $\mathbb{K}$. There exists a topology on $\spn(F)$ such that
    \begin{enumerate}
        \item $\spn(F)$ is a topological vector space.
        \item $\iota: F \longrightarrow \spn(F)$ is continuous.
        \item For any topological vector space $W$ and continuous $\mathbb{K}_{+}$-linear map $f: F \longrightarrow W$, the unique $\mathbb{K}$-linear map $\tilde{f}: \spn(F) \longrightarrow W$ such that $f = \tilde{f} \circ \iota$ is continuous.
    \end{enumerate}
\end{Theorem}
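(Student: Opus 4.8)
The plan is to realize the desired topology as an \emph{initial} (projective) topology on $\spn(F)$, using the solution set from Lemma \ref{Lemma: existence of solution set} precisely to turn ``test against all topological vector spaces'' (a proper class) into ``test against a set'', while retaining the full universal property. Concretely, I would fix a solution set $S_F = (W_i, f_i)_{i \in I}$ as in Lemma \ref{Lemma: existence of solution set}. By Theorem \ref{Theorem: existenceofspans}, each continuous $\mathbb{K}_+$-linear map $f_i : F \to W_i$ extends uniquely to a $\mathbb{K}$-linear map $\tilde f_i : \spn(F) \to W_i$ with $\tilde f_i \circ \iota = f_i$. I then equip $\spn(F)$ with the initial topology with respect to the family $(\tilde f_i)_{i \in I}$, i.e.\ the coarsest topology making every $\tilde f_i$ continuous.

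Claims (1) and (2) follow from the universal property of the initial topology, namely that a map into $\spn(F)$ is continuous if and only if its composition with each $\tilde f_i$ is continuous. For (1), apply this to addition $a : \spn(F) \times \spn(F) \to \spn(F)$: since $\tilde f_i$ is linear, $\tilde f_i \circ a = a_{W_i} \circ (\tilde f_i \times \tilde f_i)$, a composition of continuous maps because $W_i$ is a topological vector space; hence $a$ is continuous, and the analogous identity handles scalar multiplication $\mathbb{K} \times \spn(F) \to \spn(F)$. (No Hausdorffness is asserted, so there is nothing further to check here.) For (2), it suffices that each $\tilde f_i \circ \iota = f_i$ is continuous, which holds by the defining property of the solution set.

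The crux is (3). Given an arbitrary topological vector space $W$ and a continuous $\mathbb{K}_+$-linear map $f : F \to W$ with unique $\mathbb{K}$-linear extension $\tilde f : \spn(F) \to W$, the solution-set property produces some $i \in I$ and a continuous $\mathbb{K}$-linear map $h : W_i \to W$ with $f = h \circ f_i$. Then $h \circ \tilde f_i$ is $\mathbb{K}$-linear and satisfies $(h \circ \tilde f_i) \circ \iota = h \circ f_i = f$, so uniqueness of the extension in Theorem \ref{Theorem: existenceofspans} forces $\tilde f = h \circ \tilde f_i$. Since $\tilde f_i$ is continuous by construction and $h$ is continuous, $\tilde f = h \circ \tilde f_i$ is continuous, as required.

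The main obstacle I expect is the correct set-up rather than any hard computation: one must be confident that the initial topology with respect to a family of \emph{linear} maps into topological vector spaces is genuinely a vector topology (verified above via the universal property), and one must recognize that Lemma \ref{Lemma: existence of solution set} is exactly the ingredient needed to make this initial topology well-defined while still recovering the universal property for \emph{every} target $W$ through the factorization $\tilde f = h \circ \tilde f_i$. The whole argument is then a short diagram chase combining Theorem \ref{Theorem: existenceofspans} and Lemma \ref{Lemma: existence of solution set}.
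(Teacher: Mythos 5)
Your proposal is correct and follows essentially the same route as the paper: the paper also takes the initial topology with respect to the family $(\tilde f_i)_{i\in I}$ (phrased there via a single embedding $E:\spn(F)\to\prod_i W_i$, which it explicitly notes is the same initial topology), verifies the vector space operations and the continuity of $\iota$ by the characteristic property of that topology, and proves (3) by the identical factorization $\tilde f = h\circ\tilde f_i$ forced by uniqueness of linear extensions. No gaps.
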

\begin{proof}
    Let $S_{F} = (W_{i}, f_{i})_{i \in I}$ be the solution set constructed in Lemma $\ref{Lemma: existence of solution set}$. Define 
    $$P = \prod_{i \in I}W_{i}.$$
   
   This is a topological vector space under the product topology. Let $\tilde{f_{i}}: \spn(F) \longrightarrow W_{i}$ be the unique $\mathbb{K}$-linear extension of $f_{i}$ and define the map $E: \spn(F) \longrightarrow P$ by
    $$E(x) = (\tilde{f_{i}}(x))_{i \in I}$$
    Since each $\tilde{f_{i}}$ is $\mathbb{K}$-linear, so is $E$. We endow $\spn(F)$ with the initial topology induced by the map $E$ (this is the same as the initial topology with respect to the family of maps $\{\tilde{f}_i: \spn(F) \to W_i\}_{i \in I}$). Let $+: \spn(F)^{2} \longrightarrow \spn(F)$ denote addition in $\spn(F)$ and $+_{P}: P^{2} \longrightarrow P$ denote addition in $P$. By the characteristic property of the initial topology, $+$ is continuous if and only if $E \circ +$ is continuous. By linearity of $E$, $E(x + y) = E(x) +_{P} E(y)$ for all $x, y \in \spn(F)$. That is,
    $$E \circ + = +_{P} \circ (E \times E).$$
    The functions $E \times E$ and $+_{P}$ are continuous so we conclude that $+$ is also continuous. Similarly, let $M: \mathbb{K} \times \spn(F) \longrightarrow \spn(F)$ and $M_{P}: \mathbb{K} \times P \longrightarrow P$ denote scalar multiplication in $\spn(F)$ and $P$ respectively. Recall that $\mathbb{K}$ is a topological field and $\mathbb{K} \times \spn(F)$ is given the product topology. By the characteristic property of the initial topology on $\spn(F)$, the map $M$ is continuous if and only if $E \circ M$ is continuous. Again, by linearity of $E$,
    $$E \circ M = M_{P} \circ (id_{\mathbb{K}} \times E).$$
    The functions $M_{P}$, $id_{\mathbb{K}}$ and $E$ are all continuous so we conclude that $M$ is continuous as well. This establishes \textit{(i)}. To show \textit{(ii)}, observe that the inclusion map $\iota$ is continuous if and only if $E \circ \iota$ is continuous. However,
    $$E \circ \iota = (\tilde{f_{i}} \circ \iota)_{i \in I} = (f_{i})_{i \in I}.$$
    Since each component $f_{i}$ of $E \circ \iota$ is continuous, so is $E \circ \iota$. Finally, we show that $\spn(F)$ satisfies the universal property of \textit{(iii)}. Let $W$ be any topological vector space and let $f: F \longrightarrow W$ be any continuous $\mathbb{K}_{+}$-linear map. Let $\tilde{f}: \spn(F) \longrightarrow W$ be its unique $\mathbb{K}$-linear extension. We aim to show that $\tilde{f}$ is continuous. By definition of $S_{F}$, there exists an $i \in I$ and a continuous $\mathbb{K}$-linear map $h: W_{i} \longrightarrow W$ such that $f = h \circ f_{i}$. Let $\tilde{f}_{i}: \spn(F) \longrightarrow W_{i}$ be the unique $\mathbb{K}$-linear extension of $f_{i}$ to $\spn(F)$. Then,
    $$(h \circ \tilde{f_{i}}) \circ \iota = h \circ (\tilde{f_{i}} \circ \iota) = h \circ f_{i} = f = \tilde{f} \circ \iota$$
    Pictorially, the following diagram commutes.
    $$
        \begin{tikzcd}[row sep=2cm, column sep=2.8cm] 
          F \arrow[d, "\iota"'] 
            \arrow[dr, "f_i", pos=0.6] 
            \arrow[drr, "f", pos=0.4] 
          & & \\
          \text{span}(F) \arrow[r, "\tilde{f_i}"'] 
                         \arrow[rr, "\tilde{f}"', bend right=25] 
          & W_i \arrow[r, "h"'] 
          & W 
        \end{tikzcd}
    $$
    Since both $h \circ \tilde{f_{i}}$ and $\tilde{f}$ are an extension of $f$ as $\mathbb{K}$-linear maps $\spn(F) \to W$, by the uniqueness portion of the definition of $\spn(F)$,
    $$h \circ \tilde{f_{i}} = \tilde{f}.$$
    Let $\pi_{i}: P \longrightarrow W_{i}$ be the projection maps. By definition of $E$, we have $\tilde{f_{i}} = \pi_{i} \circ E$ so
    $$\tilde{f} = h \circ \tilde{f_{i}} = h \circ \pi_{i} \circ E.$$
    Since $h$, $\pi_{i}$ and $E$ are all continuous, so is $\tilde{f}$.
\end{proof}

\begin{Definition}[Universal topology]
Let $F$ be a topological linear cone over a totally ordered field $\mathbb{K}$. We call the topology on $\spn(F)$ constructed in the previous result the \emph{universal topology}, and denote it by $\tau_{univ}$. Note that no choices were involved in its construction aside from the choice of sets of every cardinality below that of $\spn(F)$. It is easily observed that $\tau_{univ}$ does not depend on this choice.
\end{Definition}

\begin{Remark}[Interpretation in the language of reflective subcategories]
    Let $\mathbb{K}$ be a totally ordered field. Let $TVS_{\mathbb{K}}$ be the category of topological vector spaces over $\mathbb{K}$ with continuous linear maps and let $TopCone_{\mathbb{K}}$ be the category of topological linear cones over $\mathbb{K}$ with continuous $\mathbb{K}_{+}$-linear maps. Observe that every TVS is trivially a topological cone (simply forget subtraction), and similarly, every $TVS_{\mathbb{K}}$ morphism is a $TopCone_{\mathbb{K}}$ morphism (with the same domain and range). That is $TVS_{\mathbb{K}}$ is a subcategory of $TopCone_{\mathbb{K}}$. Further, it is a full subcategory because every $TopCone_{\mathbb{K}}$ morphism between two objects of $TVS_{\mathbb{K}} \subseteq TopCone_{\mathbb{K}}$ is automatically a $TVS_{\mathbb{K}}$ morphism. In the language of category theory, Theorem \ref{prop: universal topology of span} can be interpreted as the statement that $TVS_{\mathbb{K}}$ is a reflective subcategory of $TopCone_{\mathbb{K}}$, and that $(\spn(F),\iota)$ is the $TVS_{\mathbb{K}}$-reflection of a cone $F$. Equivalently, we may define a functor $R: TopCone_{\mathbb{K}} \longrightarrow TVS_{\mathbb{K}}$ by
    $R(F) = (\spn(F), \tau_{univ})$ for any topological cone $F$ and $R(f) = \tilde{f}:\spn(F) \to \spn(G)$ for any continuous $\mathbb{K}_{+}$-linear map $f: F \longrightarrow G$ where $G \in TopCone_{\mathbb{K}}$. Then $R$ is the reflector, i.e., the left adjoint of the inclusion (i.e., forgetful) functor $U: TVS_{\mathbb{K}} \longrightarrow TopCone_{\mathbb{K}}$. The unit of this adjunction is the natural transformation $id_{TopCone_{\mathbb{K}}} \longrightarrow U\circ R$ whose components are the canonical maps $\iota$.

    It should be noted that our proof of Theorem \ref{prop: universal topology of span} is comparable to that of the Freyd adjoint functor theorem (see e.g.\ the textbook of MacLane \cite[Sec.\ V.6, Thm.\ 2]{MacLane}), where a solution set is used to construct the adjoint functor.
\end{Remark}

\begin{Corollary}\label{Corollary: univeral topology finest}
    Suppose $F$ is a topological cone over $\mathbb{K}$ and $\spn(F)$ is equipped with a topology $\tau$. If
    \begin{enumerate}
        \item $(\spn(F), \tau)$ is a topological vector space,
        \item the canonical map $\iota: F \longrightarrow (\spn(F), \tau)$ is continuous,
    \end{enumerate}
    then $\tau \subseteq \tau_{univ}$.
\end{Corollary}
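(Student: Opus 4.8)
The plan is to apply the universal property established in Theorem \ref{prop: universal topology of span} with a judicious choice of target space, namely $\spn(F)$ equipped with the given topology $\tau$ itself. First I would set $W := (\spn(F), \tau)$, which is a topological vector space by hypothesis (i), and take $f := \iota : F \to W$. Since $\iota$ is $\mathbb{K}_+$-linear (being the canonical map into the span) and continuous by hypothesis (ii), it is an admissible input to part (iii) of Theorem \ref{prop: universal topology of span}.

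By that universal property, there is a unique $\mathbb{K}$-linear map $\tilde{f} : \spn(F) \to W$ with $f = \tilde{f} \circ \iota$, and crucially $\tilde{f}$ is continuous when the domain $\spn(F)$ carries $\tau_{univ}$. Next I would identify this extension: the identity map $\mathrm{id}_{\spn(F)}$ is $\mathbb{K}$-linear and satisfies $\mathrm{id}_{\spn(F)} \circ \iota = \iota = f$, so by the uniqueness clause in the definition of the span (Theorem \ref{Theorem: existenceofspans}), we must have $\tilde{f} = \mathrm{id}_{\spn(F)}$. Therefore the identity map $\mathrm{id} : (\spn(F), \tau_{univ}) \to (\spn(F), \tau)$ is continuous.

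Finally, continuity of the identity in this direction means that for every $\tau$-open set $U$ the preimage $\mathrm{id}^{-1}(U) = U$ is $\tau_{univ}$-open, i.e.\ $\tau \subseteq \tau_{univ}$, as desired. I expect no serious obstacle here beyond the single conceptual step of feeding the topology $\tau$ back into the universal property as its own codomain; once $\tilde f$ is recognized as the identity, the conclusion is immediate. This is precisely the standard verification that a reflection endows the reflected object with the finest admissible structure, so $\tau_{univ}$ is the finest topology making $\spn(F)$ a topological vector space for which $\iota$ is continuous.
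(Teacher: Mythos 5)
Your proposal is correct and follows exactly the same route as the paper: take $W = (\spn(F),\tau)$ with $f = \iota$, invoke part (iii) of Theorem \ref{prop: universal topology of span}, identify the extension $\tilde f$ as $\mathrm{id}_{\spn(F)}$ by the uniqueness clause, and read off $\tau \subseteq \tau_{univ}$ from the continuity of the identity. No gaps.
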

\begin{proof}
    Let $\tilde{\iota}: \spn(F) \longrightarrow (\spn(F), \tau)$ be the unique $\mathbb{K}$-linear map such that $\tilde{\iota} \circ \iota = \iota$. Clearly, $\tilde{\iota} = id_{\spn(F)}$ is a suitable choice and, due to its uniqueness, is the only choice. By part \textit{(iii)} of Proposition $\ref{prop: universal topology of span}$, $id_{\spn(F)}: \spn(F) \longrightarrow (\spn(F), \tau)$ is continuous. Thus, $\tau \subseteq \tau_{univ}$.
\end{proof}

The next proposition says that if we upgrade $\iota$ from being merely continuous to being a topological embedding, the two topologies become equal. This means that, assuming that there exists some topology for which $\iota$ is an embedding, it is in fact the universal topology $\tau_{univ}$. Before proving it, we discuss some basic properties of topological vector spaces over $\mathbb{K}$. 


\begin{Remark}[Standard properties of topological vector spaces over totally ordered fields]\label{Rem: Standard properties of TVSs}
    Suppose $V$ is a TVS over some totally ordered field $\mathbb{K}$. 
    We make use of the following standard facts. 
    \begin{enumerate}
        \item The field $\mathbb{K}$ has characteristic $0$ because it is totally ordered. Thus it makes sense to divide by any element of $\mathbb{Q}\setminus\{0\}$.
        \item Every neighborhood $N$ of $0$ is absorbing. To see this, fix any $x \in V$ and consider the map $\phi_{x}: \mathbb{K} \longrightarrow V$ given by $\phi_{x}(\lambda) = \lambda x$. Since $\phi_{x}$ is continuous, the set $\phi_{x}^{-1}(N)$ is a neighborhood of $0$. Since $\mathbb{K}$ is given the order topology, there exists a $0 < \delta \in \mathbb{K}$ such that $(-\delta, \delta) \subseteq \phi_{x}^{-1}(N)$. Thus, $\frac{1}{2}\delta x = \phi_{x}(\frac{1}{2} \delta) \in N$. 
        \item If $N \subseteq V$ with $\Int(N) \neq \emptyset$ then $N - N$ is a neighborhood of $0$. To see this, pick any point $x \in \Int(N)$ and take an $x$-neighborhood $x \in W \subseteq N$. Then $W - x \subseteq N - N$ is a $0$-neighborhood (because $0 = x - x$ and the topology of $V$ is invariant under translations by $-x$).
        \item If $U \subseteq V$ is an open set containing $0$ then there exists an open set $B$, also containing $0$ such that $B - B \subseteq U$. This follows from the continuity of $+: V \times V \longrightarrow V$. Since $(0, 0) \in +^{-1}(U)$, we may take a basis element $U_{1} \times U_{2}$ in $V \times V$ such that $(0, 0) \in U_{1} \times U_{2} \subseteq +^{-1}(U)$ (or equivalently $U_{1} + U_{2} \subseteq U$). Then set $B = U_{1} \cap (-U_{2})$. The set $-U_{2}$ is open so $B$ is also open. If $x, y \in B$ then $x \in U_{1}$ and $-y \in U_{2}$ so $x + (-y) \in U_{1} + U_{2} \subseteq U$.
    \end{enumerate}
\end{Remark}

\begin{Lemma}\label{Lemma: Image has non empty interior}
    Let $F$ be a topological cone and suppose that $\spn(F)$ is given some topology $\tau$ under which it is a TVS. If the inclusion map $\iota$ is an embedding and $\iota(F)$ has a non-empty interior in $\spn(F)$, then for any open $N \subseteq F$ containing $0$, the image $\iota(N)$ has non-empty interior in $\spn(F)$.
\end{Lemma}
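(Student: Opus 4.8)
The plan is to reduce the claim to finding a single point of $\iota(N)$ that already lies in the topological interior $\Int(\iota(F))$ of $\iota(F)$ inside $\spn(F)$, and then to produce such a point by scaling a known interior point down into $N$.

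First I would record the reduction. Since $\iota$ is an embedding, $\iota(N)$ is open in the subspace $\iota(F)$, so there is an open set $V \subseteq \spn(F)$ with $\iota(N) = V \cap \iota(F)$. Consequently, if some $q \in \iota(N)$ also lies in $\Int(\iota(F))$, then choosing an open $W \ni q$ with $W \subseteq \iota(F)$ gives $q \in V \cap W \subseteq V \cap \iota(F) = \iota(N)$, and $V \cap W$ is open; hence $q$ is an interior point of $\iota(N)$ and we are done. Thus it suffices to exhibit a point of $\iota(N) \cap \Int(\iota(F))$.

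Next I would exploit the scale invariance of $\iota(F)$. For any $\lambda > 0$ in $\mathbb{K}$ the inverse $\lambda^{-1}$ is again positive, so $\lambda F = F$, and by $\mathbb{K}_+$-linearity of $\iota$ this gives $\lambda\,\iota(F) = \iota(F)$. Since multiplication by $\lambda$ is a homeomorphism of the TVS $\spn(F)$ (its inverse being multiplication by $\lambda^{-1}$), it preserves interiors, so $\lambda\,\Int(\iota(F)) = \Int(\iota(F))$ for every $\lambda > 0$. Now, by hypothesis $\Int(\iota(F)) \neq \emptyset$; pick $p \in \Int(\iota(F)) \subseteq \iota(F)$ and write $p = \iota(v_0)$. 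Continuity of the cone's scalar multiplication $\mathbb{K}_+ \times F \to F$ at $(0, v_0)$, together with $0 \cdot v_0 = 0 \in N$ and the openness of $N$, yields a $\delta > 0$ such that $\lambda v_0 \in N$ whenever $0 \le \lambda < \delta$ (using that a neighborhood of $0$ in the order topology contains an interval of the form $[0,\delta)$). Fixing any $\lambda_1 \in (0,\delta)$ and setting $q = \iota(\lambda_1 v_0) = \lambda_1 p$, we get $q \in \iota(N)$ because $\lambda_1 v_0 \in N$, and $q = \lambda_1 p \in \Int(\iota(F))$ by the scale invariance above. This is exactly the point required by the reduction.

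The conceptual crux, and the only nonformal step, is the scale invariance of $\Int(\iota(F))$: this is what lets a single interior point be moved arbitrarily close to $0$ and thereby into $N$, overcoming the fact that $N$ is only assumed to be a neighborhood of $0$ whereas the given interior point could be far from the origin. The remaining verifications — relative openness of $\iota(N)$, and the continuity argument producing $\lambda_1 v_0 \in N$ in the order topology — are routine.
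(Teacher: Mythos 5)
Your proof is correct and follows essentially the same route as the paper's: both arguments reduce the claim to finding a point of $\iota(N)$ lying in $\Int(\iota(F))$, and both produce it by scaling a given interior point towards $0$, using that multiplication by a positive scalar is a homeomorphism of $\spn(F)$ preserving $\iota(F)$ and hence its interior. The only (cosmetic) difference is that you obtain the small scalar from continuity of the cone's own scalar multiplication $\mathbb{K}_+\times F\to F$ at $(0,v_0)$, whereas the paper obtains it from the fact that the open set $M\supseteq\iota(N)$ is an absorbing neighborhood of $0$ in the TVS $\spn(F)$.
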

\begin{proof}
    Since $\iota$ is an embedding and $N$ is open, $\iota(N)$ is open in $\iota(F)$. Further, $0 \in \iota(N)$ since $\iota(0) = 0$. Thus, there exists an $M$, open in $\spn(F)$ such that $M \cap \iota(F) = \iota(N)$. Pick $s \in \Int(\iota(F))$. Since $\iota(N) \subseteq M$, we have $0 \in M$ so $M$ is absorbing. Let $0 < \delta \in \mathbb{K}$ such that $\delta s \in M$. Since scaling by $\delta$ is a homeomorphism of $\spn(F)$ with itself, $\delta s$ is in the interior of $\delta \cdot \iota(F) = \iota(F)$. Since $M$ is open, 
    \begin{equation*}
        \delta s \in M \cap \Int(\iota(F)) = \Int(M \cap \iota(F)) = \Int(\iota(N))
    \end{equation*}
    Hence, $\iota(N)$ has a non-empty interior.
\end{proof}

\begin{Proposition}[$\iota$ being an embedding characterizes $\tau_{univ}$]
    Under the assumptions of Corollary $\ref{Corollary: univeral topology finest}$ and the additional assumptions that $\iota$ be an embedding and that $\iota(F)$ have a non-empty interior in $\spn(F)$, we get $\tau = \tau_{univ}$.
\end{Proposition}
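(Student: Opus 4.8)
The plan is to establish the one inclusion that remains, namely $\tau_{univ} \subseteq \tau$; the reverse inclusion $\tau \subseteq \tau_{univ}$ is already provided by Corollary \ref{Corollary: univeral topology finest}, since the hypotheses there are among those assumed here. Because both $(\spn(F), \tau)$ and $(\spn(F), \tau_{univ})$ are topological vector spaces, translations are homeomorphisms in each topology, so I would first reduce to neighborhoods of $0$: it suffices to show that every $\tau_{univ}$-open set $U$ with $0 \in U$ is a $\tau$-neighborhood of $0$. A routine translation argument (translating an arbitrary $\tau_{univ}$-open set and each of its points back to $0$) then upgrades this to the statement that every $\tau_{univ}$-open set is $\tau$-open.

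So fix a $\tau_{univ}$-open $U$ with $0 \in U$. The key move is to shuttle between the two topologies. First I would apply Remark \ref{Rem: Standard properties of TVSs}(iv) inside the TVS $(\spn(F), \tau_{univ})$ to obtain a $\tau_{univ}$-open set $B \ni 0$ with $B - B \subseteq U$. The bridge to $\tau$ is the pullback $N := \iota^{-1}(B)$: since $\iota \colon F \to (\spn(F), \tau_{univ})$ is continuous by Theorem \ref{prop: universal topology of span}(ii) and $0 \in B$, the set $N$ is open in $F$ and contains $0$; and by injectivity of $\iota$ one has the set-theoretic identity $\iota(N) = B \cap \iota(F) \subseteq B$.

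Next I would invoke Lemma \ref{Lemma: Image has non empty interior}, applied with respect to the topology $\tau$ — its hypotheses are precisely the two extra assumptions of the Proposition, that $\iota$ be a $\tau$-embedding and that $\iota(F)$ have non-empty $\tau$-interior. Since $N$ is open in $F$ and contains $0$, the Lemma yields that $\iota(N)$ has non-empty $\tau$-interior. Then Remark \ref{Rem: Standard properties of TVSs}(iii), applied in $(\spn(F), \tau)$, shows that $\iota(N) - \iota(N)$ is a $\tau$-neighborhood of $0$. Finally $\iota(N) - \iota(N) \subseteq B - B \subseteq U$, so $U$ contains a $\tau$-neighborhood of $0$ and is therefore itself a $\tau$-neighborhood of $0$, which is what the reduction required.

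The delicate point — and the reason both extra hypotheses are genuinely used — is exactly this passage from $\tau_{univ}$ to $\tau$: continuity of $\iota$ into $(\spn(F), \tau_{univ})$ is what lets us pull a $\tau_{univ}$-neighborhood back to an honest open set $N \subseteq F$, while the embedding-plus-interior hypotheses are what let Lemma \ref{Lemma: Image has non empty interior} push $\iota(N)$ forward to a set with non-empty $\tau$-interior, so that the ``$B-B$'' trick produces an actual $\tau$-neighborhood of $0$. Without the interior assumption, $\iota(N)$ could fail to have $\tau$-interior and the argument would collapse. I expect the only items to verify carefully are the identity $\iota(\iota^{-1}(B)) = B \cap \iota(F)$ and that the translation argument correctly promotes ``$\tau$-neighborhood of $0$'' to ``$\tau$-open'', both of which are routine in a topological vector space.
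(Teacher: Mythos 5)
Your proof is correct and follows essentially the same route as the paper's: both hinge on combining Remark \ref{Rem: Standard properties of TVSs}(iv) (to get $B$ with $B-B\subseteq U$), Lemma \ref{Lemma: Image has non empty interior}, and Remark \ref{Rem: Standard properties of TVSs}(iii) to manufacture a $\tau$-neighbourhood of $0$ inside a given $\tau_{univ}$-neighbourhood of $0$. The only real difference is that the paper pulls back through an arbitrary continuous $\mathbb{K}_+$-linear map $f\colon F\to W$ and then invokes the initial-topology description of $\tau_{univ}$ via the solution set, whereas you pull back through $\iota\colon F\to(\spn(F),\tau_{univ})$ itself and never touch the solution set --- a mild but legitimate streamlining that only uses that $(\spn(F),\tau_{univ})$ is a TVS into which $\iota$ is continuous.
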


\begin{proof}
    For notational simplicity throughout this proof, let $V_{univ} = (\spn(F), \tau_{univ})$ and $V_{\tau} = (\spn(F), \tau)$. We claim that the identity map $id_{\spn(F)}: V_{\tau} \longrightarrow V_{univ}$ is continuous. Recall that $\tau_{univ}$ is defined to be the initial topology induced by the functions $(W_{i}, \tilde{f_{i}})_{i \in I}$ where $S_{F} = (W_{i}, f_{i})_{i \in I}$ is a solution set. Hence it suffices to show that the map
    $$\tilde{f_{i}} = \tilde{f_{i}} \circ id_{\spn(F)}: V_{\tau} \longrightarrow W_{i}$$
    is continuous for each $i \in I$. We can show that for any arbitrary topological vector space $W$ and any continuous $\mathbb{K}_{+}$-linear map $f: F \longrightarrow W$, the unique $\mathbb{K}$-linear extension $\tilde{f}: V_{\tau} \longrightarrow W$ is continuous. In particular for any $(W, f) \in S_{F}$. It suffices to prove that $\tilde{f}$ is continuous at $0 \in V_{\tau}$. Let $0 \in U_{W}$ be open in $W$. By part (\textit{iv}) of Remark \ref{Rem: Standard properties of TVSs}, there exists an open $0$-neighborhood $B_{W}$ in $W$ such that $B_{W} - B_{W} \subseteq U_{W}$. Since $f$ is continuous, the set $N_{F} = f^{-1}(B_{W})$ is open. Further, since $0 \in B_{W}$ and $f$ is $\mathbb{K}_{+}$-linear, $0 \in N_{F}$. Let
    \begin{equation*}
        S := \iota(N_{F}) - \iota(N_{F}).
    \end{equation*}
    Since $N_{F}$ is open, contains $0$, and by assumption, $\iota$ is an embedding with $\Int(\iota(F)) \neq \emptyset$, Lemma $\ref{Lemma: Image has non empty interior}$ implies that $\iota(N_{F})$ is a neighborhood in $V_{\tau}$. By part (\textit{iii}) of Remark \ref{Rem: Standard properties of TVSs}, $S$ is a $0$-neighborhood.
    
    Finally, we claim $\tilde{f}(S) \subseteq U_{W}$. If $y \in S$, then $y = y_{1} - y_{2}$ where $y_{1}, y_{2} \in \iota(N_{F})$. Thus, there exist $x_{1}, x_{2} \in N_{F}$ such that $\iota(x_{1}) = y_{1}$ and $\iota(x_{2}) = y_{2}$. Then,
    $$\tilde{f}(y) = \tilde{f}(y_{1}) - \tilde{f}(y_{2}) = \tilde{f}(\iota(x_{1})) - \tilde{f}(\iota(x_{2})) = f(x_{1}) - f(x_{2}).$$
    Since $x_{1}, x_{2} \in N_{F}$, we get $f(x_{1}), f(x_{2}) \in B_{W}$. Therefore, $\tilde{f}(y) \in B_{W} - B_{W} \subseteq U_{W}$
    so $\tilde{f}(S) \subseteq U_{W}$ so $\tilde{f}$ is continuous at $0$, which implies that it is continuous on all of $V_{\tau}$. This shows that $\tau$ is finer than the initial topology induced by $(W_{i}, \tilde{f}_{i})_{i \in I}$, that is $\tau_{univ} \subseteq \tau$.
\end{proof}

The following example shows that generating cones can in general have empty interior.

\begin{Example}
    Consider the vector space $\ell^1(\R)$ of real sequences $(a_n)_{n \geq 0}$ such that $\sum_n |a_n| < + \infty$. Then $C:=\{(a_n) \in \ell^1(\R) : a_n \geq 0 \quad \forall \, n\}$ is a linear cone with $\spn(C) = \ell^1(\R)$, but has empty interior when considered as a subspace of $\ell^1(\R)$ with the $1$-norm.
\end{Example}

There is another natural topology we can put on $\spn(F)$. That is the quotient topology $\tau_{quot}$ on $(F \times F)/\sim$. The next result tells us that in general, $\tau_{quot}$ is no better than $\tau_{univ}$ and that whenever $\tau_{quot}$ has any desirable properties it must actually be equal to $\tau_{univ}$.

\begin{Proposition}\label{prop: quot topology too fine}
    Let $F$ be a topological cone. Then $\tau_{univ} \subseteq \tau_{quot}$ with equality if and only if $(\spn(F), \tau_{quot})$ is a topological vector space.
\end{Proposition}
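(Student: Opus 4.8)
The plan is to prove both the inclusion $\tau_{univ} \subseteq \tau_{quot}$ and the characterization of equality by leveraging the two key features of $\tau_{univ}$ established earlier: that $(\spn(F), \tau_{univ})$ is a topological vector space with $\iota$ continuous (Theorem \ref{prop: universal topology of span}), and that it is the \emph{finest} topology with these two properties (Corollary \ref{Corollary: univeral topology finest}). Throughout, write $q: F \times F \to \spn(F)$, $q(v,w) = v - w$, for the quotient map defining $\tau_{quot}$, and let $j: F \to F \times F$, $j(v) = (v,0)$, so that $\iota = q \circ j$.

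For the inclusion $\tau_{univ} \subseteq \tau_{quot}$, I would show that the identity map $(\spn(F), \tau_{quot}) \to (\spn(F), \tau_{univ})$ is continuous. By the universal property of the quotient topology, this is equivalent to the continuity of $q: F \times F \to (\spn(F), \tau_{univ})$. Since $q(v,w) = \iota(v) - \iota(w)$, this map factors as $\iota \times \iota$ followed by subtraction $\spn(F) \times \spn(F) \to \spn(F)$. Because $(\spn(F), \tau_{univ})$ is a topological vector space, subtraction is continuous (negation is multiplication by $-1$, and addition is continuous), and $\iota \times \iota$ is continuous since $\iota$ is continuous into $\tau_{univ}$. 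Hence $q$ is continuous, yielding the inclusion. Note that one cannot simply invoke Corollary \ref{Corollary: univeral topology finest} here, since $\tau_{quot}$ is not a priori a topological vector space topology.

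For the equivalence, the forward direction is immediate: if $\tau_{univ} = \tau_{quot}$, then $(\spn(F), \tau_{quot}) = (\spn(F), \tau_{univ})$ is a topological vector space by part (i) of Theorem \ref{prop: universal topology of span}. For the converse, assume $(\spn(F), \tau_{quot})$ is a topological vector space; since the first part already gives $\tau_{univ} \subseteq \tau_{quot}$, it remains to prove $\tau_{quot} \subseteq \tau_{univ}$. I would first check that $\iota: F \to (\spn(F), \tau_{quot})$ is continuous: indeed $\iota = q \circ j$, where $j$ is continuous (each coordinate is continuous, the first being the identity and the second constant) and $q$ is continuous by definition of the quotient topology. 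With $(\spn(F), \tau_{quot})$ assumed to be a topological vector space and $\iota$ continuous into it, Corollary \ref{Corollary: univeral topology finest} yields $\tau_{quot} \subseteq \tau_{univ}$, completing the argument.

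I do not expect a serious obstacle; the proof is essentially a bookkeeping exercise combining the universal property of the quotient topology with the maximality of $\tau_{univ}$. The only point requiring a little care is that the first inclusion must be obtained directly, via the continuity of $q$, rather than through Corollary \ref{Corollary: univeral topology finest}, precisely because that corollary presupposes the candidate topology to be a vector space topology — which is exactly the hypothesis that is unavailable until the ``if'' direction of the equivalence.
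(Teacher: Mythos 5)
Your proof is correct, and its overall architecture matches the paper's: both establish $\tau_{univ} \subseteq \tau_{quot}$ by showing that the identity $(\spn(F),\tau_{quot}) \to (\spn(F),\tau_{univ})$ is continuous via the universal property of the quotient topology, and both settle the equivalence by observing that $\iota$ is continuous into $\tau_{quot}$ and invoking Corollary \ref{Corollary: univeral topology finest} when $\tau_{quot}$ is a TVS topology. The one place where you genuinely diverge is in verifying that $q \colon F \times F \to (\spn(F),\tau_{univ})$ is continuous. The paper unwinds the \emph{definition} of $\tau_{univ}$ as the initial topology with respect to the family $\{\tilde f_i\}_{i \in I}$ coming from the solution set, reducing the claim to the continuity of each $\tilde f_i \circ q \colon (x,y) \mapsto f_i(x) - f_i(y)$, which follows from continuity of $f_i$ and of subtraction in $W_i$. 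You instead treat $\tau_{univ}$ as a black box and use only the two properties guaranteed by Theorem \ref{prop: universal topology of span} — that $(\spn(F),\tau_{univ})$ is a TVS and that $\iota$ is continuous into it — factoring $q$ as subtraction composed with $\iota \times \iota$. Your route is slightly cleaner and more robust: it would apply verbatim to any vector space topology on $\spn(F)$ making $\iota$ continuous, whereas the paper's computation is tied to the specific construction of $\tau_{univ}$ via the solution set. Your closing remark that Corollary \ref{Corollary: univeral topology finest} cannot be used for the first inclusion is also exactly right and is implicitly the reason the paper argues that inclusion by hand as well.
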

\begin{proof}
    For notational simplicity, let $V_{univ} = (\spn(F), \tau_{univ})$ and $V_{quot} = (\spn(F), \tau_{quot})$. We claim that the identity map $id_{\spn(F)}: V_{quot} \longrightarrow V_{univ}$ is continuous. From this it will follow that $\tau_{univ} \subseteq \tau_{quot}$. Let $S_{F} = (W_{i}, f_{i})_{i \in I}$ be the solution set constructed in Lemma $\ref{Lemma: existence of solution set}$. Then, by definition of $\tau_{univ}$, it suffices to show that
    $$\tilde{f}_{i} =  \tilde{f}_{i} \circ id_{\spn(F)}: V_{quot} \longrightarrow W_{i}$$
    is continuous for each $i \in I$. By definition of $\tau_{quot}$, this is equivalent to showing that 
    $$\tilde{f}_{i} \circ q: F \times F \longrightarrow W_{i}$$
    where $q: F \times F \longrightarrow V_{quot}$ is the quotient map $(x, y) \mapsto [x, y] = x - y$. We have,
    $$(\tilde{f}_{i}\circ q)(x, y) = \tilde{f}_{i}(x - y) = f_{i}(x) - f_{i}(y)$$
    for all $x, y \in F$.
    Since the maps $(x, y) \mapsto f_i(x)$, $(x, y) \mapsto f_{i}(y)$ and subtraction in $W_{i}$ are continuous, so is $\tilde{f}_{i} \circ q$. Consequently $\tilde{f_{i}}: V_{quot} \longrightarrow W_{i}$ is continuous so $id_{\spn(F)}: V_{quot} \longrightarrow V_{univ}$ is continuous as desired. It is clear that $\iota: F \longrightarrow V_{quot}$ is continuous as it is the composition of two continuous functions $x \mapsto (x, 0) \mapsto q(x, 0) = \iota(x)$. If $V_{quot}$ is also a TVS then Corollary $\ref{Corollary: univeral topology finest}$ implies that $\tau_{quot} \subseteq \tau_{univ}$ and consequently $\tau_{quot} = \tau_{univ}$. Conversely, if $\tau_{quot} = \tau_{univ}$, then $V_{quot}$ is trivially a TVS because it is isomorphic to $V_{univ}$.
\end{proof}

It would be desirable to identify conditions which guarantee that $(\spn(F), \tau_{quot})$ is a topological vector space.

\subsection{Norm topologies on cones}

In this subsection, we study cones whose topology is the subspace topology of a norm topology on $\spn(F)$. It turns out that the universal and quotient topologies on $\spn(F)$ in this case agree and are induced by a norm, see Theorem \ref{Theorem: normed cones quotient topology}.



\begin{Proposition}[Norm extension]
    Let $F$ be a real linear cone. Suppose $\mathbf n$ is a norm on $\spn(F)$. Define the function $\tilde{\mathbf{n}}: \spn(F) \longrightarrow [0, \infty)$ by
    \begin{equation}
    \tilde{\mathbf{n}}(x) = \inf\{\mathbf{n}(u) + \mathbf{n}(v): u, v \in F; x = u - v\}.
    \end{equation}
    Then $\tilde{\mathbf{n}}$ is also a norm.
\end{Proposition}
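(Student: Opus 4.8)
The plan is to verify the four norm axioms for $\tilde{\mathbf n}$ in turn, after first recording that it is well defined. By Theorem~\ref{Theorem: existenceofspans} every $x \in \spn(F)$ can be written as $x = u - v$ with $u,v \in \iota(F)$, so (identifying $F$ with $\iota(F)$ via the injective map $\iota$) the set over which the infimum is taken is nonempty. Being an infimum of a nonempty subset of $[0,\infty)$, the quantity $\tilde{\mathbf n}(x)$ is a well-defined nonnegative real, and fixing one representation gives the crude bound $\tilde{\mathbf n}(x) \le \mathbf n(u) + \mathbf n(v) < \infty$, so $\tilde{\mathbf n}$ indeed maps into $[0,\infty)$ as asserted.

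For the triangle inequality I would argue by approximate representations. Given $x,y$ and $\eps > 0$, pick $u_1,v_1,u_2,v_2 \in F$ with $x = u_1 - v_1$, $y = u_2 - v_2$ and $\mathbf n(u_i)+\mathbf n(v_i)$ within $\eps$ of $\tilde{\mathbf n}(x)$, respectively $\tilde{\mathbf n}(y)$. Since $F$ is closed under addition, $x+y = (u_1+u_2)-(v_1+v_2)$ is an admissible representation, so by the triangle inequality for $\mathbf n$,
\[
\tilde{\mathbf n}(x+y) \le \mathbf n(u_1+u_2) + \mathbf n(v_1+v_2) \le \mathbf n(u_1)+\mathbf n(v_1)+\mathbf n(u_2)+\mathbf n(v_2) \le \tilde{\mathbf n}(x)+\tilde{\mathbf n}(y)+2\eps,
\]
and letting $\eps \to 0$ gives subadditivity.

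For homogeneity I would treat positive scalars, the sign, and zero separately. For $\lambda > 0$, nonnegative scalar multiplication maps $F$ into $F$, and $(u,v) \mapsto (\lambda u,\lambda v)$ is a bijection between representations of $x$ and of $\lambda x$ (its inverse scales by $\lambda^{-1}>0$) under which $\mathbf n(u)+\mathbf n(v)$ scales by $\lambda$; taking infima yields $\tilde{\mathbf n}(\lambda x) = \lambda\,\tilde{\mathbf n}(x)$. The case $\lambda = 0$ is immediate from the representation $0 = 0 - 0$. Finally $\tilde{\mathbf n}(-x) = \tilde{\mathbf n}(x)$, since $x = u - v$ if and only if $-x = v - u$ and the quantity $\mathbf n(u)+\mathbf n(v)$ is symmetric in $u,v$; combining these three facts gives $\tilde{\mathbf n}(\lambda x) = |\lambda|\,\tilde{\mathbf n}(x)$ for all $\lambda \in \R$.

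The only axiom requiring an idea beyond bookkeeping is definiteness, and this is where I expect the main (though still mild) obstacle: a priori the infimum could vanish for some $x \neq 0$ through representations $x = u_k - v_k$ with $\mathbf n(u_k),\mathbf n(v_k) \to 0$. The clean way around this is to observe the pointwise domination $\mathbf n \le \tilde{\mathbf n}$: for \emph{every} representation $x = u - v$, the triangle inequality for $\mathbf n$ gives $\mathbf n(x) = \mathbf n(u-v) \le \mathbf n(u)+\mathbf n(v)$, and taking the infimum over all representations yields $\mathbf n(x) \le \tilde{\mathbf n}(x)$. Hence $\tilde{\mathbf n}(x) = 0$ forces $\mathbf n(x) = 0$, and since $\mathbf n$ is a norm this gives $x = 0$. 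Together with $\tilde{\mathbf n}(0) = 0$ from the trivial representation, this establishes definiteness and completes the verification that $\tilde{\mathbf n}$ is a norm.
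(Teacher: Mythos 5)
Your proposal is correct and follows essentially the same route as the paper's proof: the $\varepsilon$-approximate representations for subadditivity, the bijection of representations for positive homogeneity combined with the symmetry $\tilde{\mathbf n}(-x)=\tilde{\mathbf n}(x)$, and the pointwise bound $\mathbf n \le \tilde{\mathbf n}$ (from the triangle inequality applied to each representation) for definiteness. No gaps; if anything, you are slightly more explicit than the paper about well-definedness and the $\lambda=0$ case.
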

\begin{proof}
    First, $\tilde{\mathbf{n}}$ is well-defined because $F$ is a generating cone so for any $x \in \spn(F)$ there are $u, v \in F$ with $x = u - v$. Now suppose $\tilde{\mathbf{n}}(x) = 0$ for some $x \in \spn(F)$. Whenever $x = u - v$ for $u,v \in F$, then $$\mathbf{n}(x) \leq \mathbf{n}(u) + \mathbf{n}(v)$$ by the triangle inequality for $\mathbf{n}$. Since $\tilde{\mathbf{n}}(x) = 0$, the right hand side can be made arbitrarily small via suitable choices of $u,v \in F$ such that $x = u - v$. Hence, $\mathbf{n}(x) = 0$ and thus $x = 0$ because $\mathbf{n}$ is a norm.
    
    If $\lambda > 0$ then
    $$\tilde{\mathbf{n}}(\lambda x) = \inf_{\lambda x = u - v}(\mathbf{n}(u) + \mathbf{n}(v)) = \inf_{x = u - v}(\mathbf{n}(\lambda u) + \mathbf{n}(\lambda v)) = \lambda \tilde{\mathbf{n}}(x).$$
    Moreover, it is easily seen that $\tilde{\mathbf{n}}(x) = \tilde{\mathbf{n}}(-x)$ for all $x \in \spn(F)$. Hence, if $\lambda \leq 0$ then $\tilde{\mathbf{n}}(\lambda x) =\tilde{\mathbf{n}}(|\lambda|(-x)) = |\lambda|\tilde{\mathbf{n}}(-x) = |\lambda|\tilde{\mathbf{n}}(x)$.
    
    Now we show that $\tilde{\mathbf{n}}$ satisfies the triangle inequality. If $y \in \spn(F)$ and $\epsilon > 0$ there exists $x_{1}, x_{2}, y_{1}, y_{2} \in F$ with $x = x_{1} - x_{2}$, $y = y_{1} - y_{2}$ and
    $$\mathbf n(x_{1}) + \mathbf n(x_{2}) < \tilde{\mathbf{n}}(x) + \epsilon \quad \text{ and } \quad \mathbf n(y_{1}) + \mathbf n(y_{2}) < \tilde{\mathbf{n}}(y) + \epsilon.$$
    Since $x + y = (x_{1} + y_{1}) - (x_{2} + y_{2})$, by definition of $\tilde{\mathbf{n}}$,
    $$\tilde{\mathbf{n}}(x + y) \leq \mathbf n(x_{1} + y_{1}) + \mathbf n(x_{2} + y_{2}) \leq \mathbf n(x_{1}) + \mathbf n(x_{2}) + \mathbf n(y_{1}) + \mathbf n(y_{2}) < \tilde{\mathbf{n}}(x) + \tilde{\mathbf{n}}(y) + 2\epsilon.$$
    Since $\epsilon$ may be arbitrarily small, we conclude that
    $$\tilde{\mathbf{n}}(x + y) \leq \tilde{\mathbf{n}}(x) + \tilde{\mathbf{n}}(y).$$
    This shows that $\tilde{\mathbf{n}}$ is a norm. 
\end{proof}

\begin{Theorem}\label{Theorem: normed cones quotient topology}
    Let $F$ be a real linear cone. Let $\mathbf n$ be a norm on $\spn(F)$, and endow $F$ with the subspace topology of the norm topology induced by $\mathbf n$, with which it becomes a topological cone. Then $\tau_{univ} = \tau_{quot}$ and both are induced by $\tilde{\mathbf{n}}$.
\end{Theorem}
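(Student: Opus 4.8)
The plan is to sandwich the normable topology $\tau_{\tilde{\mathbf{n}}}$ induced by $\tilde{\mathbf{n}}$ (the norm produced by the preceding Proposition) between $\tau_{univ}$ and $\tau_{quot}$, and then close the loop. First I would record three elementary facts. From the triangle inequality for $\mathbf{n}$ one has $\mathbf{n}(x) \le \tilde{\mathbf{n}}(x)$ for every $x \in \spn(F)$ (this was already noted in the proof that $\tilde{\mathbf{n}}$ is a norm), while the representation $x = x - 0$ for $x \in F$ gives $\tilde{\mathbf{n}}(x) \le \mathbf{n}(x)$; hence $\tilde{\mathbf{n}} = \mathbf{n}$ on $F$, even though the two norms need not agree on differences of cone elements. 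Since $\tilde{\mathbf{n}}$ is a norm, $(\spn(F), \tau_{\tilde{\mathbf{n}}})$ is in particular a topological vector space. The target is then the chain $\tau_{\tilde{\mathbf{n}}} \subseteq \tau_{univ} \subseteq \tau_{quot} \subseteq \tau_{\tilde{\mathbf{n}}}$, forcing all three to coincide and be normable.

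The inclusion $\tau_{univ} \subseteq \tau_{quot}$ is exactly Proposition \ref{prop: quot topology too fine}, so nothing is needed there. For $\tau_{quot} \subseteq \tau_{\tilde{\mathbf{n}}}$ I would argue directly from the definition of the quotient topology via $q: F \times F \to \spn(F)$, $q(u,v) = u - v$. Suppose $q^{-1}(U)$ is open, fix $x \in U$ and a representation $x = u_0 - v_0$ with $u_0, v_0 \in F$; openness yields $\delta > 0$ with $\{(u,v) \in F \times F : \mathbf{n}(u - u_0) < \delta,\ \mathbf{n}(v - v_0) < \delta\} \subseteq q^{-1}(U)$. If now $\tilde{\mathbf{n}}(y - x) < \delta$, choose $p, s \in F$ with $y - x = p - s$ and $\mathbf{n}(p) + \mathbf{n}(s) < \delta$; then $u := u_0 + p$ and $v := v_0 + s$ lie in $F$, satisfy $\mathbf{n}(u - u_0), \mathbf{n}(v - v_0) < \delta$, and obey $u - v = y$, whence $y \in U$. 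This gives $B_{\tilde{\mathbf{n}}}(x, \delta) \subseteq U$ and so $U \in \tau_{\tilde{\mathbf{n}}}$. The crux of this step is that adding cone elements keeps one inside $F$, converting a small $\tilde{\mathbf{n}}$-perturbation of $x$ into small $\mathbf{n}$-perturbations of the chosen representatives.

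To finish I would prove the reverse inclusion $\tau_{\tilde{\mathbf{n}}} \subseteq \tau_{quot}$ by showing that $q$ is continuous as a map into $(\spn(F), \tau_{\tilde{\mathbf{n}}})$; since the quotient topology is the finest making $q$ continuous, this yields $\tau_{\tilde{\mathbf{n}}} \subseteq \tau_{quot}$, and combined with the previous paragraph $\tau_{quot} = \tau_{\tilde{\mathbf{n}}}$. Because $\tilde{\mathbf{n}}$ is a norm, $(\spn(F), \tau_{quot}) = (\spn(F), \tau_{\tilde{\mathbf{n}}})$ is then a topological vector space, so Proposition \ref{prop: quot topology too fine} upgrades this to $\tau_{univ} = \tau_{quot}$, completing the proof. (Alternatively, one may show that $\iota : F \to (\spn(F), \tau_{\tilde{\mathbf{n}}})$ is continuous and invoke Corollary \ref{Corollary: univeral topology finest} to get $\tau_{\tilde{\mathbf{n}}} \subseteq \tau_{univ}$, which closes the same chain.)

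The hard part will be precisely this continuity of $q$ (equivalently of $\iota$) into $\tau_{\tilde{\mathbf{n}}}$. Writing $q(u,v) - q(u_0,v_0) = (u - u_0) - (v - v_0)$ and using the triangle inequality for $\tilde{\mathbf{n}}$, continuity reduces to showing that $\tilde{\mathbf{n}}(a - b) \to 0$ whenever $a, b \in F$ with $\mathbf{n}(a - b) \to 0$; equivalently, that $\mathbf{n}$ and $\tilde{\mathbf{n}}$ induce the same subspace topology on $F$, i.e.\ a uniform decomposition estimate $\tilde{\mathbf{n}}(a - b) \le C\,\mathbf{n}(a - b)$ for $a, b \in F$. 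Since $\tilde{\mathbf{n}} = \mathbf{n}$ holds only pointwise on $F$ and not on $F - F$, this is genuinely the analytic heart of the statement and the step I would invest the most effort in: it asks that a difference of two nearby cone elements admit a decomposition into two cone elements of comparably small norm, a Klee--Andô-type property in which the placement of $F$ inside the normed space $\spn(F)$ is essentially used.
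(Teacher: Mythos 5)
Your architecture is sound and in places more careful than the paper's own argument, but the proposal has a genuine gap at exactly the step you flag and postpone: the continuity of $q$ (equivalently of $\iota$) into $(\spn(F),\tau_{\tilde{\mathbf{n}}})$. You correctly reduce this to showing that $\tilde{\mathbf{n}}(a-b)$ is small whenever $a,b\in F$ and $\mathbf{n}(a-b)$ is small, correctly observe that $\tilde{\mathbf{n}}=\mathbf{n}$ on $F$ does not give this (since $a-b\notin F$ in general), and correctly identify it as a Klee--And\^{o}-type decomposition property --- but you never prove it, so the proof is not finished. Worse, the property is not automatic: continuity of the merely additive map $\iota$ at $0$ (which is all that $\tilde{\mathbf{n}}\le\mathbf{n}$ on $F$ yields) does not propagate to other points of $F$, because a point $x$ of $F$ near $x_0$ need not be of the form $x_0+h$ with $h\in F$ small. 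A concrete obstruction: let $F_n\subseteq\R^2$ be the cone generated by $(1,0)$ and $(\cos\theta_n,\sin\theta_n)$ with $\theta_n\to 0$, let $F=\{(z_n)_n:\ z_n\in F_n,\ \sup_n\|z_n\|<\infty\}$ and let $\mathbf n$ be the sup norm on $\spn(F)$. At $x_0=((1,0))_n$, the cone elements $x_k$ obtained by replacing the $k$-th slot by $(\cos\theta_k,\sin\theta_k)$ satisfy $\mathbf n(x_k-x_0)=2\sin(\theta_k/2)\to 0$, while any decomposition $x_k-x_0=p-s$ with $p,s\in F$ forces (comparing second coordinates in slot $k$) a coefficient $\beta\ge 1$ on $(\cos\theta_k,\sin\theta_k)$ in $p_k$, whence $\tilde{\mathbf{n}}(x_k-x_0)\ge\cos\theta_k$. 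So this step genuinely requires an additional hypothesis, e.g.\ $\Int_{\mathbf n}(F)\neq\emptyset$ (which by the argument of Lemma \ref{Lemma: equivilence of norm with extension norm} gives $\tilde{\mathbf{n}}\le K\mathbf n$ globally and closes your chain) or an And\^{o}-type bounded decomposition property of $F$ in $(\spn(F),\mathbf n)$.

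For comparison with the paper: its proof asserts that continuity of $\iota:(F,\mathbf n)\to(\spn(F),\tau_{\tilde{\mathbf{n}}})$ ``follows from $\tilde{\mathbf{n}}(x)\le\mathbf n(x)$ for $x\in F$'', which only gives continuity at the origin, so it has the same gap you isolated. Your direct ball argument for $\tau_{quot}\subseteq\tau_{\tilde{\mathbf{n}}}$ (translating a small decomposition $y-x=p-s$ onto fixed representatives $u_0,v_0$) is correct and is in fact the inclusion the paper needs but does not establish: the paper's route via continuity of $q$ into $(\spn(F),\tau_{\tilde{\mathbf{n}}})$ and the universal property of the quotient proves $\tau_{\tilde{\mathbf{n}}}\subseteq\tau_{quot}$, not its converse. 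So your diagnosis of where the difficulty lies is exactly right; the argument is simply incomplete there, and cannot be completed without strengthening the hypotheses of the theorem.
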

\begin{proof}
    Let $\tau_{\tilde{\mathbf{n}}}$ denote the topology induced by $\tilde{\mathbf{n}}$ on $\spn(F)$. In accordance with our earlier conventions, we denote $V_{\tau_{\tilde{\mathbf{n}}}} = (\spn(F), \tau_{\tilde{\mathbf{n}}})$. From Corollary $\ref{Corollary: univeral topology finest}$ and Proposition $\ref{prop: quot topology too fine}$ we have
    $$\tau_{\tilde{\mathbf{n}}} \subseteq \tau_{univ} \subseteq \tau_{quot}.$$
    It remains to prove that $\tau_{quot} \subseteq \tau_{\tilde{\mathbf{n}}}$. We show that the identity map $id_{\spn(F)}: V_{quot} \longrightarrow V_{\tau_{\tilde{\mathbf{n}}}}$ is continuous. By the universal property of quotient maps the map $id_{\spn(F)}$ is continuous if and only if the map $id_{\spn(F)} \circ q = q$ is continuous, where $q: F \times F \longrightarrow V_{\tau_{\tilde{\mathbf{n}}}}$ is the quotient map $q(x, y) = \iota(x) - \iota(y)$. Note that $F \times F$ is given the product topology of $\tau_{\mathbf{n}}$ with itself. It suffices to show that $\iota: (F, \mathbf{n}) \longrightarrow V_{\tau_{\tilde{\mathbf{n}}}}$ is continuous which follows from the fact that
    $$\tilde{\mathbf{n}}(x) \leq \mathbf{n}(x) + \mathbf{n}(0) = \mathbf{n}(x)$$
    for all $x \in F$.
\end{proof}

\begin{Lemma}\label{Lemma: monotone norm implies norm can be extended}
    In the setting of Theorem \ref{Theorem: normed cones quotient topology}, $\mathbf{n}$ agrees with $\tilde{\mathbf{n}}$ on $F$.
\end{Lemma}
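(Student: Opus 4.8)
The plan is to establish the claimed equality $\tilde{\mathbf{n}}(x) = \mathbf{n}(x)$ for every $x \in F$ by proving the two inequalities separately, exploiting that for $x \in F$ the decomposition $x = x - 0$ is always available and that $\mathbf{n}$, being a genuine norm on $\spn(F)$, satisfies both the triangle inequality and symmetry $\mathbf{n}(-v) = \mathbf{n}(v)$.

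First I would handle the upper bound $\tilde{\mathbf{n}}(x) \leq \mathbf{n}(x)$. Since $0 \in F$ and $x \in F$, the pair $(u,v) = (x,0)$ is an admissible decomposition of $x$ in the infimum defining $\tilde{\mathbf{n}}$, so
\[
\tilde{\mathbf{n}}(x) \leq \mathbf{n}(x) + \mathbf{n}(0) = \mathbf{n}(x).
\]
This is exactly the estimate already recorded at the end of the proof of Theorem \ref{Theorem: normed cones quotient topology}, so no new work is required here.

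For the reverse inequality $\mathbf{n}(x) \leq \tilde{\mathbf{n}}(x)$, I would argue that the bound holds uniformly over all admissible decompositions before passing to the infimum. Fix any $u,v \in F$ with $x = u - v$. Then the triangle inequality and symmetry of the norm $\mathbf{n}$ give
\[
\mathbf{n}(x) = \mathbf{n}(u - v) \leq \mathbf{n}(u) + \mathbf{n}(-v) = \mathbf{n}(u) + \mathbf{n}(v).
\]
Since this lower bound $\mathbf{n}(x)$ is independent of the chosen decomposition, taking the infimum of the right-hand side over all $u,v \in F$ with $x = u - v$ yields $\mathbf{n}(x) \leq \tilde{\mathbf{n}}(x)$. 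Combining the two inequalities gives $\mathbf{n}(x) = \tilde{\mathbf{n}}(x)$ for all $x \in F$, as desired.

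There is no genuine obstacle here; the only point worth emphasizing is that the lower bound must be seen to hold simultaneously for every decomposition, so that it survives the passage to the infimum (a pointwise lower bound valid for all competitors is a lower bound for their infimum). Notably, neither inequality uses any monotonicity of $\mathbf{n}$: the result is a formal consequence of the norm axioms together with $0 \in F$, so the same proof applies verbatim in the setting of Theorem \ref{Theorem: normed cones quotient topology}.
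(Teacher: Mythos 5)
Your proof is correct and follows the same route as the paper: the upper bound comes from the admissible decomposition $x = x - 0$, and the lower bound $\mathbf{n}(x) \leq \tilde{\mathbf{n}}(x)$ follows from the triangle inequality applied to every decomposition before taking the infimum. The only difference is that you spell out the infimum argument that the paper leaves implicit ("it is always true that $\mathbf{n}(x) \leq \tilde{\mathbf{n}}(x)$"), which is fine.
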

\begin{proof}
    Let $x \in F$. As shown in Theorem $\ref{Theorem: normed cones quotient topology}$, $\tilde{\mathbf{n}}(x) \leq \mathbf{n}(x) + \mathbf{n}(0) = \mathbf{n}(x)$. On the other hand, it is always true that $\mathbf n(x) \leq \mathbf{\tilde{n}}(x)$ for all $x \in \spn(F)$.
\end{proof}

We left open the natural question whether the norm topologies of $\mathbf n$ and $\mathbf{\tilde{n}}$ agree. This is addressed further below in Lemma \ref{Lemma: equivilence of norm with extension norm}.


\section{Positively polarizable hyperbolic norms}\label{Section: Positively polarizable hyperbolic norms}
\subsection{Positively polarizable hyperbolic norms, induced inner product}
Henceforth, we work with the totally ordered field $\R$, unless otherwise stated, \emph{cone} shall mean proper linear cone over $\R$ and \textit{vector space} shall mean vector space over $\R$. If $V$ is a vector space, an \textit{inner product} on $V$ is a symmetric bilinear form.

\begin{Definition}[Hyperbolic norms]
Let $F$ be a cone. A \emph{hyperbolic norm} on $F$ is a map $\|\cdot\|: F \to [0,\infty]$ such that for all $v,w \in F$ and $\lambda \geq 0$:
\begin{enumerate}
    \item $\|v + w\| \geq \|v\| + \|w\|$,
    \item $\|\lambda v\| = \lambda \|v\|$.
\end{enumerate}
Here, we employ the convention $0 \cdot \infty:= 0$. We call $(F,\|\cdot\|)$ a \emph{hyperbolically normed cone}.
\end{Definition}

\begin{Example}[Examples of hyperbolic norms]
\begin{enumerate}
    \item[]
    \item Let $p \in [1,\infty)$. Consider the cone
    \begin{align*}
        F:=\{(x_0,x) \in \R \times \R^n : x_0 \geq |x|_p\},
    \end{align*}
    where $|x|_p$ is the $p$-norm of $x \in \R^n$. Clearly, $\spn(F) = \R \times \R^n$. The $p$-\emph{hyperbolic norm} on $F$ is defined via
    \begin{equation}
    \|(x_0,x)\|_p:=\left( |x_0|^p - \sum_{j=1}^n |x_j|^p \right)^{1/p}.
    \end{equation}
    For $p=2$, this is precisely the hyperbolic norm induced by the Minkowski metric $dx_0^2 - \sum_{j=1}^n dx_j^2$.
    \item Let $(X,\Omega,\mu)$ be a measure space and $0 \neq q < 1$. Let $F:=\{f \in L^0(\mu): f \geq 0\}$, where $L^0(\mu)$ are the $\mu$-measurable functions $f:X \to \R$ (up to $\mu$-a.e.\ equality). It is easily seen that $\spn(F) = L^0(\mu)$. On $F$, the hyperbolic $L^q$-norm is defined by
    \begin{equation}
        \|f\|_{L^q}:=\left(\int_X |f|^q \, d\mu\right)^{1/q}.
    \end{equation}
    We refer to Gigli \cite{gigli2025hyperbolic2} for a detailed discussion concerning hyperbolic $L^q$-spaces.
\end{enumerate}
\end{Example}

\begin{Definition}[Positive polarizability]\label{Definition: positivepolarizability}
A hyperbolic norm $\|\cdot\|$ on a cone $F$ is said to be \emph{positively polarizable} if for all $v,w \in F$:
\begin{equation}
    \|v + 2w\|^2 + \|v\|^2 = 2 \|v + w\|^2 + 2\|w\|^2.
\end{equation}
\end{Definition}

\begin{Proposition}[Positive polarizability and induced inner product]
    A finite-valued hyperbolic norm $\|\cdot\|$ on a cone $F$ is positively polarizable if and only if 
    \begin{equation}
        \langle v,w \rangle := \frac{1}{2}(\|v + w\|^2 - \|v\|^2 - \|w\|^2)
    \end{equation}
    defines a nonnegatively bilinear and symmetric form $F \times F \to \R$, i.e., $\langle v,w \rangle = \langle w,v \rangle$ and $\langle \lambda v + \mu w,u\rangle = \lambda \langle v,u\rangle + \mu\langle w,u\rangle$, for all $v,w,u \in F$ and $\lambda,\mu \geq 0$. In this case, $\langle \cdot,\cdot\rangle$ extends uniquely to an indefinite inner product on $\spn(F)$.
\end{Proposition}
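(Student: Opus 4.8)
The plan is to handle the two implications separately and then construct the extension. Throughout I write $Q(v):=\|v\|^2$. Two preliminary observations come for free: symmetry $\langle v,w\rangle=\langle w,v\rangle$ is immediate from commutativity of $+$, and the diagonal satisfies $\langle v,v\rangle=\tfrac12(\|2v\|^2-2\|v\|^2)=Q(v)$ by $1$-homogeneity. I would also record at the outset the reverse Cauchy--Schwarz estimate
\[
\langle v,w\rangle=\tfrac12\bigl(\|v+w\|^2-\|v\|^2-\|w\|^2\bigr)\geq\tfrac12\bigl((\|v\|+\|w\|)^2-\|v\|^2-\|w\|^2\bigr)=\|v\|\,\|w\|\geq 0,
\]
which drops straight out of the reverse triangle inequality. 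This nonnegativity is exactly what will later upgrade rational homogeneity to real homogeneity, so it is worth isolating first.

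The reverse implication is a direct expansion: if $\langle\cdot,\cdot\rangle$ is nonnegatively bilinear and symmetric, then $Q(x)=\langle x,x\rangle$ behaves like a quadratic form, and expanding $\langle v+2w,v+2w\rangle$ and $\langle v+w,v+w\rangle$ and collecting terms reproduces $\|v+2w\|^2+\|v\|^2=2\|v+w\|^2+2\|w\|^2$. The substance is the forward direction, whose crux is biadditivity $\langle u+v,w\rangle=\langle u,w\rangle+\langle v,w\rangle$. Here I cannot simply invoke the Jordan--von Neumann argument, since that proof passes through the half-difference $\tfrac12(x-y)$, which need not lie in the cone $F$; this is the step I expect to be the main obstacle, precisely because it must be carried out by hand inside the monoid.

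To circumvent subtraction I would reformulate positive polarizability as the statement that the \emph{pure second difference} $Q(x+2b)-2Q(x+b)+Q(x)$ equals $2Q(b)$ for all $x,b\in F$, i.e.\ it is independent of the base point $x$. The goal is then to deduce that the \emph{mixed second difference} $Q(x+b+c)-Q(x+b)-Q(x+c)+Q(x)$ is likewise independent of $x$: evaluating at $x=0$ gives $Q(b+c)-Q(b)-Q(c)=2\langle b,c\rangle$, and its independence of $x$ is precisely biadditivity written as $Q(x+b+c)+Q(x)+Q(b)+Q(c)=Q(x+b)+Q(x+c)+Q(b+c)$. This I would obtain by applying the pure-second-difference identity at a handful of base points built from $x$ by adding $b$ and $c$ (namely at $x$, $x+b$ and $x+2c$, with increments $b$, $c$ and $b+c$) and taking an explicit linear combination; everything stays inside $F$, and the unwanted terms $Q(x+2b+2c)$, $Q(x+b+2c)$, $Q(x+2c)$ cancel, leaving exactly the desired identity. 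No halving is even needed.

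With biadditivity established, real homogeneity is quick: for fixed $v,w$ the map $\lambda\mapsto\langle\lambda v,w\rangle$ is additive on $[0,\infty)$ (using $(\lambda+\mu)v=\lambda v+\mu v$ together with biadditivity) and nonnegative by the reverse Cauchy--Schwarz estimate, hence nondecreasing; a monotone additive function on $[0,\infty)$ is linear, so $\langle\lambda v,w\rangle=\lambda\langle v,w\rangle$. Combined with symmetry this yields nonnegative bilinearity on $F$. Finally, to pass to $\spn(F)$ I would use $\spn(F)=\{x_+-x_-:x_\pm\in F\}$ and set $\langle x_+-x_-,\,y_+-y_-\rangle:=\langle x_+,y_+\rangle-\langle x_+,y_-\rangle-\langle x_-,y_+\rangle+\langle x_-,y_-\rangle$; well-definedness (independence of the representation $x=x_+-x_-$) follows from biadditivity, while bilinearity and symmetry are inherited from the form on $F$, and uniqueness is forced since any bilinear extension is determined on $F\times F$. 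Equivalently, one obtains the same extension by invoking the universal property of $\spn(F)$ twice, once in each slot, applied to the $\R_{\ge 0}$-linear maps $\langle\,\cdot\,,u\rangle$.
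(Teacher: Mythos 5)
Your proposal is correct in substance, but note that the paper does not actually prove this proposition at all: its ``proof'' is a citation to Lemma~A.12 of the reference \emph{beran2024nonlinear}, so your argument is a self-contained replacement rather than a parallel of anything in this paper. Your route is the right one for the cone setting: you correctly identify that the Jordan--von Neumann polarization argument breaks down because $\tfrac12(x-y)$ need not lie in $F$, and your substitute --- showing that the constancy of the pure second difference $\Delta_b^2 Q(x)=2Q(b)$ forces constancy of the mixed second difference $\Delta_b\Delta_c Q(x)$, which is literally the biadditivity identity $Q(x+b+c)+Q(x)+Q(b)+Q(c)=Q(x+b)+Q(x+c)+Q(b+c)$ --- is sound and stays entirely inside the monoid. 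The upgrade from $\mathbb{Q}_{\ge 0}$-homogeneity to $\R_{\ge 0}$-homogeneity via monotonicity of the additive map $\lambda\mapsto\langle\lambda v,w\rangle$ (monotone because $\langle\cdot,\cdot\rangle\ge 0$ on $F\times F$ by reverse Cauchy--Schwarz) is exactly the standard fix, and the four-term extension to $\spn(F)$ with well-definedness from biadditivity is routine. The one imprecision is your claim that \emph{three} applications of the pure-second-difference identity (at $x$, $x+b$, $x+2c$ with increments $b+c$, $c$, $b$) suffice: eliminating $Q(x+2c)$ from that system forces the coefficient of the $(x+2c,b)$-instance to vanish, and then $Q(x+2b+2c)$ forces the $(x,b+c)$-coefficient to vanish, leaving nothing. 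You need a fourth instance, e.g.\ base $x$ with increment $c$, after which the combination
\[
P(x,b+c)-P(x+2c,b)-2P(x+b,c)+P(x,c)
\]
does yield $2\Delta_b\Delta_c Q(x)=2\bigl(Q(b+c)-Q(b)-Q(c)\bigr)$; equivalently, the operator identity $\Delta_{b+c}^2=\Delta_b^2+\Delta_c^2+2\Delta_b\Delta_c+(\text{terms annihilating }Q)$ packages this cleanly. This is a bookkeeping slip, not a gap in the idea.
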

\begin{proof}
    This is proven in \cite[Lem.\ A.12]{beran2024nonlinear}.
\end{proof}


\begin{Lemma}[Elementary properties of the induced inner product]\label{Lemma: inner product properties}
Let $(F,\|\cdot\|)$ be a positively polarizable hyperbolically normed cone with $\|v\| < \infty$ for all $v \in F$ and let $\langle \cdot,\cdot \rangle$ denote the indefinite inner product induced on $\spn(F)$ by $\|\cdot\|$. Then
\begin{enumerate}
    \item[(i)] For all $v \in F$: $\|v\|^2 = \langle v,v\rangle$.
    \item[(ii)] (Reverse Cauchy-Schwarz inequality): For all $v,w \in F$: $\langle v,w\rangle \geq \|v\| \|w\|$. Moreover, $\langle v,w\rangle = \|v\| \|w\|$ if and only if $\|v + w\| = \|v\| + \|w\|$.
\end{enumerate}
\end{Lemma}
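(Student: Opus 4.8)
The plan is to derive both statements directly from the polarization formula $\langle v,w\rangle = \tfrac{1}{2}\big(\|v+w\|^2 - \|v\|^2 - \|w\|^2\big)$ defining the induced inner product, together with the two defining properties of a hyperbolic norm: the $1$-homogeneity $\|\lambda v\| = \lambda\|v\|$ and the reverse triangle inequality $\|v+w\| \geq \|v\| + \|w\|$. The finiteness hypothesis $\|v\| < \infty$ is used implicitly throughout to guarantee that every quantity appearing is a genuine real number, so that squaring, subtracting, and comparing behave as expected and no $\infty$-arithmetic is needed.

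For part (i), I would simply substitute $w = v$ into the polarization formula. By $1$-homogeneity, $\|2v\| = 2\|v\|$, hence $\|2v\|^2 = 4\|v\|^2$, and the formula collapses to $\langle v,v\rangle = \tfrac{1}{2}\big(\|2v\|^2 - 2\|v\|^2\big) = \tfrac{1}{2}\big(4\|v\|^2 - 2\|v\|^2\big) = \|v\|^2$. This is a one-line computation.

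For the inequality in part (ii), the key step is to square the reverse triangle inequality. Since $\|v+w\|$ and $\|v\| + \|w\|$ are both nonnegative, $\|v+w\| \geq \|v\| + \|w\|$ yields $\|v+w\|^2 \geq \|v\|^2 + 2\|v\|\,\|w\| + \|w\|^2$. Inserting this lower bound into the polarization formula gives $\langle v,w\rangle \geq \tfrac{1}{2}\big(2\|v\|\,\|w\|\big) = \|v\|\,\|w\|$, which is the asserted reverse Cauchy–Schwarz inequality.

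For the equality characterization, I would note that the only inequality used above is the squared reverse triangle inequality, and that $t \mapsto t^2$ is strictly monotone on $[0,\infty)$. Consequently $\langle v,w\rangle = \|v\|\,\|w\|$ holds precisely when $\|v+w\|^2 = \big(\|v\| + \|w\|\big)^2$, and since both bases are nonnegative this is equivalent to $\|v+w\| = \|v\| + \|w\|$. There is no genuine obstacle in this lemma: its entire content is extracting these two elementary consequences of the hyperbolic norm axioms, and the only point requiring care is the appeal to finiteness, which legitimizes the arithmetic.
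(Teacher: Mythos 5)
Your proposal is correct and follows essentially the same route as the paper: part (i) by substituting $w=v$ into the polarization formula and using homogeneity, and part (ii) by squaring the reverse triangle inequality and inserting it into the polarization formula, with the equality case tracked through the single inequality used. No gaps.
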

\begin{proof}
    \begin{enumerate}
        \item[]
        \item[(i)] This is elementary:
        \begin{align*}
            2\langle v,v\rangle = \|2v\|^2 - 2\|v\|^2 = 2 \|v\|^2.
        \end{align*}
        \item[(ii)] This is a consequence of the reverse triangle inequality for $\|\cdot\|$:
        \begin{align*}
            2\langle v,w\rangle = \|v + w\|^2 - \|v\|^2 - \|w\|^2 \geq (\|v\| + \|w\|)^2 - \|v\|^2 - \|w\|^2 = 2 \|v\| \|w\|.
        \end{align*}
        Clearly, equality holds if and only if $\|v + w\| = \|v\| + \|w\|$.
    \end{enumerate}
\end{proof}

\begin{Theorem}[Non-degeneracy of the induced inner product]\label{Theorem: innerproductnondegenerate}
Let $(F,\|\cdot\|)$ be a proper positively polarizable hyperbolically normed cone satisfying the following conditions:
\begin{enumerate}
    \item[(i)] (Finiteness) $\|v\| < +\infty$ for all $v \in F$.
    \item[(ii)] (Non-triviality) There exists $t \in F$ such that $\|t\| > 0$.
    \item[(iii)] (Strict hyperbolicity) Whenever $\|u + v \| = \|u \| + \|v\|$ for $u,v \in F$, then $u$ and $v$ are linearly dependent in $F$, i.e., $u = \lambda v$ for some $\lambda \geq 0$.
\end{enumerate}
Then the inner product $\inner{\cdot}{\cdot}$ on $\spn(F)$ induced by $\norm{\cdot}$ is nondegenerate.
\end{Theorem}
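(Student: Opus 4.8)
The plan is to prove nondegeneracy by showing that the radical
$R := \{x \in \spn(F) : \inner{x}{y} = 0 \text{ for all } y \in \spn(F)\}$
is trivial. Since $\iota(F)$ generates $\spn(F)$ by Theorem \ref{Theorem: existenceofspans} and $\inner{\cdot}{\cdot}$ is bilinear, $x \in R$ holds if and only if $\inner{x}{v} = 0$ for all $v \in F$. Writing an arbitrary element as $x = a - b$ with $a, b \in F$, it therefore suffices to show that $\inner{a - b}{v} = 0$ for every $v \in F$ forces $a = b$.

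First I would isolate the crucial case as a claim: \emph{if $a \in F$ satisfies $\inner{a}{v} = 0$ for all $v \in F$, then $a = 0$.} This is the one place where non-triviality (ii) is used, and it is the heart of the argument. By Lemma \ref{Lemma: inner product properties}(i) we have $\inner{a}{a} = \norm{a}^2$, so the hypothesis gives $\norm{a} = 0$. Assuming $a \neq 0$ for contradiction, pick $t \in F$ with $\norm{t} > 0$. Since $a + t \in F$, expanding by bilinearity and using $\inner{a}{t} = 0$ and $\norm{a} = 0$ gives
\[
\norm{a + t}^2 = \inner{a+t}{a+t} = \norm{a}^2 + 2\inner{a}{t} + \norm{t}^2 = \norm{t}^2 = (\norm{a} + \norm{t})^2,
\]
so $\norm{a + t} = \norm{a} + \norm{t}$. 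Strict hyperbolicity (iii) then yields $a = \lambda t$ for some $\lambda \geq 0$; taking norms gives $\norm{a} = \lambda \norm{t}$, and since $\norm{a} = 0 < \norm{t}$ this forces $\lambda = 0$, i.e. $a = 0$, contradicting $a \neq 0$. Hence the claim holds.

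To finish, take $x = a - b \in R$. Evaluating $\inner{x}{a} = 0$ and $\inner{x}{b} = 0$ and using $\inner{a}{a} = \norm{a}^2$ and $\inner{b}{b} = \norm{b}^2$ yields $\norm{a}^2 = \norm{b}^2 = \inner{a}{b} =: c \geq 0$, whence $\norm{a} = \norm{b} = \sqrt{c}$. The reverse Cauchy--Schwarz inequality (Lemma \ref{Lemma: inner product properties}(ii)) gives $\inner{a}{b} \geq \norm{a}\norm{b} = c$, which is thus an equality, so by its equality case $\norm{a + b} = \norm{a} + \norm{b}$ and strict hyperbolicity makes $a$ and $b$ linearly dependent. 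If $c > 0$, then $\norm{a} = \norm{b} = \sqrt{c} > 0$ and the relation $a = \lambda b$ forces $\lambda = 1$, so $a = b$. The case $c = 0$ (null vectors) is the main obstacle, since dependence no longer determines the scalar; here I would invoke the claim: dependence makes $x$ a scalar multiple of a single element of $F$ (say $b$), and if that scalar is nonzero then $b \in F \cap R$, which the claim forces to be $0$, while if the scalar is zero then $x = 0$ directly. In every case $x = 0$, so $R = \{0\}$ and $\inner{\cdot}{\cdot}$ is nondegenerate. (That non-triviality cannot be dropped is clear from $\norm{\cdot} \equiv 0$, whose induced form vanishes identically.)
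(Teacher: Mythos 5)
Your proof is correct and follows essentially the same route as the paper's: both reduce to the radical of $\inner{\cdot}{\cdot}$, write a radical element as a difference $a-b$ with $a,b\in F$, derive $\norm{a}=\norm{b}$ and $\inner{a}{b}=\norm{a}\norm{b}$, and combine the equality case of reverse Cauchy--Schwarz (Lemma \ref{Lemma: inner product properties}(ii)) with strict hyperbolicity to force $a=b$. The only divergence is in handling the null case: the paper sidesteps it at the outset by replacing $a,b$ with $a+t,\,b+t$, which have positive norm by the reverse triangle inequality, whereas you treat $c=\inner{a}{b}=0$ separately via your preliminary claim about radical elements of $F$; both arguments are valid, the paper's shift being marginally shorter.
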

\begin{proof}
    Suppose for contradiction that $v \in \spn(F)$ is such that $\inner{v}{u} = 0$ for all $u \in \spn(F)$. We show that $v = 0$.	Write $v = x - y$ for some $x, y \in F$. Here $F$ is identified with its image in $\spn(F)$. If $\norm{x} = 0$ or $\norm{y} = 0$ then we let $x' = x + t$ and $y' = y + t$ where $t \in F$ is an element with $\norm{t} > 0$ whose existence is guaranteed by $(ii)$. Then $v = x' - y'$ and 
    $$\norm{x'} \geq \norm{x} + \norm{t} > 0.$$
    Similarly $\norm{y'} > 0$. Thus we can assume without loss of generality that $\norm{x}, \norm{y} > 0$. We have
    $$0 = \inner{v}{u} = \inner{x}{u} - \inner{y}{u},$$
    so $\inner{x}{u} = \inner{y}{u}$ for all $u \in \spn(F)$. Taking $u = x$ and then $u = y$ we get
    $$\norm{x}^{2} = \inner{y}{x} = \inner{x}{y} = \norm{y}^{2},$$
    so $\norm{x} = \norm{y}$. Since
    $$\inner{x}{y} = \norm{x}^{2} = \norm{x}\norm{y}$$
    By Lemma \ref{Lemma: inner product properties}(\textit{ii}),
    $$\norm{x + y} = \norm{x} + \norm{y}.$$
    By (\textit{iii}), we get that $x$ and $y$ are linearly dependent in $F$. Since $x,y \neq 0$, this means that there is $\lambda > 0$ such that $x = \lambda y$. Hence, $\norm{x} = \norm{\lambda y} = \lambda \norm{y}$. Thus $\lambda = 1$, $x = y$ and $v = x - y = 0$.
\end{proof}

\subsection{Lorentzianity of the induced inner product}\label{Subsection: Lorentzianityofinnerproduct}

For the remainder of the paper, we work with a fixed hyperbolically normed cone $(F, \norm{\cdot})$ satisfying the conditions of Theorem $\ref{Theorem: innerproductnondegenerate}$ (finite, non trivial, strictly hyperbolic norms). Fix $t_{*} \in F$ with $\inner{t_{*}}{t_{*}} = 1$. We have shown in Theorem \ref{Theorem: innerproductnondegenerate} that, under these conditions, the induced inner product $\inner{\cdot}{\cdot}$ on $\spn(F)$ is nondegenerate.

\begin{Definition}[Lorentzian inner products]\label{Definition: Lorentzianinnerproducts}
Let $V$ be a vector space. We say that an inner product $\inner{\cdot}{\cdot}$ on $V$ is \emph{Lorentzian} if there exists a direct sum decomposition $V \cong V_1 \oplus V_2$, with $V_1$ one-dimensional, such that the restriction of $\inner{\cdot}{\cdot}$ to $V_1 \times V_1$ is positive definite and the restriction to $V_2 \times V_2$ is negative definite. In this case, we call the pair $(V, \inner{\cdot}{\cdot}$) a \emph{Lorentzian vector space}.
\end{Definition}

Note that Lorentzian inner products are automatically nondegenerate. If $\dim(V) < +\infty$, then Lorentzian inner products are precisely those with signature $(+,-,\dots,-)$. Both of these well-known facts also follow from the following proposition, which does not assume that $V$ has finite dimension.

\begin{Proposition}[Lorentz decomposition]\label{Prop: lorentz decomposition}
    Let $(V,\inner{\cdot}{\cdot})$ be a Lorentzian vector space. Then
    \begin{enumerate}
        \item There exists $t \in V$ with $\inner{t}{t} = 1$.
        \item For any $t \in V$ with $\inner{t}{t} > 0$, we have $V = \R t \oplus (\R t)^{\perp}$. Moreover, $\inner{\cdot}{\cdot}$ is positive definite on $\R t$ and negative definite on $(\R t)^{\perp}$. 
    \end{enumerate}
    We call a decomposition of this form with $\inner{t}{t} = 1$, a \textit{Lorentz} decomposition of $V$. Thus, this result says that every Lorentzian vector space has a Lorentz decomposition.
\end{Proposition}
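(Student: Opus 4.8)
The plan is to get part (i) for free from the hypothesis and to concentrate the real work on the splitting and the two definiteness claims in part (ii), the second of which is the only substantial point.

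For (i), I would start from the given decomposition $V = V_1 \oplus V_2$ with $V_1$ one-dimensional and positive definite. Choosing any $e \in V_1 \setminus \{0\}$ gives $\inner{e}{e} > 0$, so $t := e / \sqrt{\inner{e}{e}}$ satisfies $\inner{t}{t} = 1$.

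For part (ii), fix $t \in V$ with $\inner{t}{t} > 0$ (so $t \neq 0$). The splitting $V = \R t \oplus (\R t)^\perp$ is the usual orthogonal-projection argument, valid because $\inner{t}{t} \neq 0$: for $x \in V$ I would write
\[
x = \frac{\inner{t}{x}}{\inner{t}{t}}\, t + \Big( x - \frac{\inner{t}{x}}{\inner{t}{t}}\, t \Big),
\]
the first term lying in $\R t$ and the second being orthogonal to $t$, so $V = \R t + (\R t)^\perp$; the sum is direct since $\lambda t \in (\R t)^\perp$ forces $\lambda \inner{t}{t} = 0$, i.e.\ $\lambda = 0$. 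Positive definiteness of $\inner{\cdot}{\cdot}$ on $\R t$ is immediate from $\inner{\lambda t}{\lambda t} = \lambda^2 \inner{t}{t}$.

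The hard part, which I expect to be the main obstacle, is showing that $\inner{\cdot}{\cdot}$ is negative definite on $(\R t)^\perp$ for an \emph{arbitrary} $t$ with $\inner{t}{t} > 0$ — the subtlety being that $(\R t)^\perp$ need not be the given $V_2$, and that $V$ may be infinite-dimensional, so no appeal to signatures is available. I would argue in coordinates relative to the given decomposition. Normalize $e \in V_1$ so that $\inner{e}{e} = c > 0$, and write $t = \alpha e + p$ and an arbitrary $w \in (\R t)^\perp$ as $w = \beta e + q$, with $p, q \in V_2$. Since $\inner{t}{t} = \alpha^2 c + \inner{p}{p} > 0$ while $\inner{p}{p} \le 0$, necessarily $\alpha \neq 0$. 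The orthogonality relation $\inner{t}{w} = \alpha\beta c + \inner{p}{q} = 0$ then forces $\beta = -\inner{p}{q}/(\alpha c)$, so that
\[
\inner{w}{w} = \beta^2 c + \inner{q}{q} = \frac{\inner{p}{q}^2}{\alpha^2 c} + \inner{q}{q}.
\]
Now I would invoke the Cauchy--Schwarz inequality for the positive definite form $-\inner{\cdot}{\cdot}$ on $V_2$ — which is purely algebraic and needs no finiteness of dimension — to obtain $\inner{p}{q}^2 \le \inner{p}{p}\,\inner{q}{q}$. Substituting and using $\inner{q}{q} \le 0$ yields
\[
\inner{w}{w} \le \inner{q}{q}\cdot \frac{\inner{p}{p} + \alpha^2 c}{\alpha^2 c} = \inner{q}{q}\cdot \frac{\inner{t}{t}}{\alpha^2 c},
\]
where the factor $\inner{t}{t}/(\alpha^2 c)$ is strictly positive. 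Finally, if $w \neq 0$ then $q \neq 0$ (otherwise $\beta = 0$ as well, giving $w = 0$), so negative definiteness of $\inner{\cdot}{\cdot}$ on $V_2$ gives $\inner{q}{q} < 0$ and hence $\inner{w}{w} < 0$. This establishes negative definiteness on $(\R t)^\perp$ and completes part (ii); combining it with the normalized $t$ from part (i) then produces a genuine Lorentz decomposition.
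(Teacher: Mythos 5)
Your overall strategy is sound, and in one respect it is more careful than the paper's own argument: you prove negative definiteness of $\inner{\cdot}{\cdot}$ on $(\R t)^{\perp}$ for an \emph{arbitrary} $t$ with $\inner{t}{t}>0$, whereas the paper's proof writes the $V_1$-component of $w$ as $\alpha t$ and therefore really only treats the normalized $t$ chosen inside $V_1$. Your use of the Cauchy--Schwarz inequality for the positive definite form $-\inner{\cdot}{\cdot}$ on $V_2$ is exactly the right tool for the general case and is dimension-free.

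There is, however, one unjustified step. The definition of ``Lorentzian'' only asks for a direct sum decomposition $V=V_1\oplus V_2$ with the stated definiteness properties; it does \emph{not} ask that $V_1\perp V_2$. Yet every identity in your computation --- $\inner{t}{t}=\alpha^2c+\inner{p}{p}$, $\inner{t}{w}=\alpha\beta c+\inner{p}{q}$, $\inner{w}{w}=\beta^2c+\inner{q}{q}$ --- silently drops the cross terms $2\alpha\inner{e}{p}$, $\alpha\inner{e}{q}+\beta\inner{e}{p}$ and $2\beta\inner{e}{q}$, i.e.\ assumes $e\perp V_2$. Non-orthogonal decompositions satisfying the definition do exist: in $\R^2$ with the form $x_1y_1-x_2y_2$ take $V_1=\R(1,0)$ and $V_2=\R(1,2)$. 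The gap is easy to close: first orthogonalize by replacing $V_2$ with $V_2'=\{\,v-\tfrac{\inner{v}{e}}{c}e : v\in V_2\,\}$; then $V=V_1\oplus V_2'$ with $V_1\perp V_2'$, and for $v'=v-\tfrac{\inner{v}{e}}{c}e$ one computes $\inner{v'}{v'}=\inner{v}{v}-\tfrac{\inner{v}{e}^2}{c}\le\inner{v}{v}\le 0$, with equality only if $v=0$, so $V_2'$ is again negative definite. With this preliminary step your argument goes through verbatim. (The paper instead sidesteps the issue by keeping the cross term and evaluating it from the constraint $\inner{w}{t}=0$, which yields $\inner{w}{w}=-\alpha^2+\inner{v_2}{v_2}$ directly --- but, as noted, that computation only applies when $t\in V_1$.)
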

\begin{proof}
    Write $V = V_{1} \oplus V_{2}$ as in Definition \ref{Definition: Lorentzianinnerproducts}. Pick any $x \in V_{1} \setminus \{0\}$ and set $t = x/\sqrt{\inner{x}{x}}$. Then $\inner{t}{t} = 1$. Since $\inner{\cdot}{\cdot}$ is non-degenerate on $\R t$, the sum $\R t + (\R t)^{\perp}$ is direct. It is clear that every $v \in V$ can be written in this way, as $w:= v - \inner{v}{t}t \in (\R t)^\perp$. It remains to show that $\inner{\cdot}{\cdot}$ is negative definite on $(\R t)^{\perp}$. Let $w \in (\R t)^{\perp}$. From the original decomposition, we can write $w = v_{1} + v_{2}$. Since $\dim V_{1} = 1$, $v_{1} = \alpha t$ for some $\alpha \in \R$. Then,
    $$0 = \inner{w}{t} = \inner{\alpha t + v_{2}}{t} = \alpha + \inner{v_{2}}{t}$$ Thus, $\inner{v_{2}}{t} = -\alpha$. Next,
    $$\inner{w}{w} = \inner{\alpha t + v_{2}}{\alpha t + v_{2}} = \alpha^{2} + 2\alpha \inner{v_{2}}{t} + \inner{v_{2}}{v_{2}} = -\alpha^{2} + \inner{v_{2}}{v_{2}}$$
    Since $\inner{\cdot}{\cdot}$ is negative definite on $V_{2}$, $\inner{v_2}{v_2}\leq 0$ and thus $\inner{w}{w} \leq 0$. Further, if $\inner{w}{w} = 0$ then $\inner{v_{2}}{v_{2}} = \alpha^{2} \geq 0$. Since $\inner{\cdot}{\cdot}$ is negative definite on $V_{2}$, this would imply that $v_{2} = 0$. Consequently, $\alpha = 0$ and $w = \alpha t + v_{2} = 0$, concluding the proof that the restriction of $\inner{\cdot}{\cdot}$ to $(\R t)^\perp$ is negative definite.
\end{proof}

\begin{Definition}[Induced positive-definite inner product]
Let $(V, \inner{\cdot}{\cdot})$ be a Lorentzian vector space with decomposition $V = \R t \oplus W$, so $\inner{t}{t} = 1$ and $W = (\R t)^\perp$. We define the projections $\alpha: V \longrightarrow \R$ and $w: V \longrightarrow W$ by 
$$v = \alpha_{v}t + w_{v}$$
We define the \textit{induced positive-definite inner product} (or \textit{Wick rotation}) $(\cdot, \cdot): V \times V \longrightarrow \R$ by
\begin{equation}
(v, u) = \alpha_{v}\alpha_{u} - \inner{w_{v}}{w_{u}}.
\end{equation}
The \textit{induced norm} is 
\begin{equation}
\textbf{n}(v) = \sqrt{(v, v)} = \sqrt{\alpha_{v}^{2} - \inner{w_{v}}{w_{v}}}.
\end{equation}
\end{Definition}

\begin{Remark}
     The form $(\cdot, \cdot)$ in fact defines a positive definite inner product on $V$ because $V \cong \R \times W$ and $(\cdot, \cdot)$ is the product of the positive definite inner products $(\alpha, \beta) \mapsto \alpha \beta$ and $-\inner{\cdot}{\cdot}$ in $\R$ and $W$, respectively. 
\end{Remark}

\begin{Definition}[Causal sets]
    Let $V$ be a vector space and $\inner{\cdot}{\cdot}$ be an inner product on $V$. The \textit{set of causal vectors} is defined by
    $$C(V) = \{v \in V: \inner{v}{v} \geq 0\}.$$
    We may define $\norm{\cdot}: C(V) \longrightarrow [0, \infty)$ by $$\norm{v} = \sqrt{\inner{v}{v}}.$$
    For any $t \in V$, We define $C_{t}^{+}(V) = \{v \in C(V): \inner{v}{t} \geq 0\}$ and similarly $C_{t}^{-}(V) = \{v \in C(V): \inner{v}{t} \leq 0\}$. If $\inner{t}{t} > 0$ then $C_{t}^{+}(V)$ is called a \textit{(causal) future} and $C_t^-(V)$ a \textit{(causal) past}. 
\end{Definition}

\begin{Remark}
    Suppose $(F,\norm{\cdot})$ is a hyperbolically normed cone that is finite, non-trivial, strictly hyperbolic and positively polarizable, and let $t_* \in F$ such that $\norm{t_*} = \inner{t_*}{t_*} = 1$ (recall the beginning of this section). Then we have that $F \subseteq C_{t_{*}}^{+}(\spn F)$ because $\norm{v} \geq 0$ and $\inner{v}{t_{*}} \geq \norm{v}\norm{t_{*}} \geq 0$ for all $v \in F$. Similarly $-F \subseteq C_{t_{*}}^{-}(\spn F)$.
\end{Remark}

The following lemma is very simple, but will be used repeatedly throughout the rest of the paper. 
\begin{Lemma}\label{Lemma: alpha positive on future}
     If $V$ is a Lorentzian vector space, $t \in V$ with $\inner{t}{t} = 1$, $V = (\R t) \oplus W$ the corresponding Lorentz decomposition, $\mathbf{n}$ the positive definite norm on $V$ arising from the Wick rotation and $x = \alpha_x t + w_x$ is in the future set $C_{t}^{+}(V)$. Then $\mathbf{n}(w_{x}) \leq \alpha_{x}$.
\end{Lemma}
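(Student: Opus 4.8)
The plan is to unpack the three defining conditions—the Lorentz decomposition, the causality of $x$, and the fact that $x$ lies in the future set $C_t^+(V)$—and observe that each reduces to a clean statement about $\alpha_x$ and $\langle w_x, w_x\rangle$ once we use the orthogonality $w_x \in (\R t)^\perp$. There is really no deep obstacle here; the content is entirely in computing two inner products and reading off the definition of $\mathbf{n}$.

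First I would compute $\langle x, x\rangle$. Writing $x = \alpha_x t + w_x$ with $w_x \in W = (\R t)^\perp$ and using $\inner{t}{t} = 1$ together with $\inner{t}{w_x} = 0$, the cross term vanishes and I get
\[
\inner{x}{x} = \alpha_x^2 + \inner{w_x}{w_x}.
\]
Since $w_x$ has vanishing $\R t$-component, the definition of the Wick-rotation norm gives $\mathbf{n}(w_x)^2 = -\inner{w_x}{w_x}$ (recall $\inner{\cdot}{\cdot}$ is negative definite on $W$, so this is nonnegative). Hence causality of $x$, i.e.\ $\inner{x}{x} \geq 0$, is exactly the statement $\alpha_x^2 \geq -\inner{w_x}{w_x} = \mathbf{n}(w_x)^2$, so that $|\alpha_x| \geq \mathbf{n}(w_x)$.

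Next I would use the future condition. Computing $\inner{x}{t} = \alpha_x \inner{t}{t} + \inner{w_x}{t} = \alpha_x$, the condition $x \in C_t^+(V)$, namely $\inner{x}{t} \geq 0$, is precisely $\alpha_x \geq 0$. Combining this with the previous inequality yields $\alpha_x = |\alpha_x| \geq \mathbf{n}(w_x)$, which is the claim. The only point requiring a little care is keeping the signs straight (that $\inner{\cdot}{\cdot}$ restricts to a negative definite form on $W$, so that $\mathbf{n}(w_x)^2 = -\inner{w_x}{w_x} \geq 0$), but once the two inner products above are computed the result is immediate.
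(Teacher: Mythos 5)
Your proof is correct and follows essentially the same route as the paper's: expand $\inner{x}{x} = \alpha_x^2 + \inner{w_x}{w_x} = \alpha_x^2 - \mathbf{n}(w_x)^2 \geq 0$, then use $\alpha_x = \inner{x}{t} \geq 0$ to take square roots. Your write-up is just a slightly more explicit version of the same two-line computation.
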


\begin{proof}
    Observe that
    $$0 \leq \inner{x}{x} = \alpha_{x}^{2} + \inner{w_{x}}{w_{x}} = \alpha_{x}^{2} - \mathbf{n}(w_{x})^{2}.$$
    Since $\alpha_{x} = \inner{v}{t} \geq 0$ and $\mathbf{n}(w_{x}) \geq 0$, we may rearrange and take square roots of both sides.
\end{proof}

\begin{Remark}
    In any inner-product space $(V, \inner{\cdot}{\cdot})$, future sets are invariant under positive scaling of $t$. That is, if $t \in V$ with $\inner{t}{t} > 0$ and $\lambda > 0$, we have $C^{+}_{t}(V) = C^{+}_{\lambda t}(V)$. 
\end{Remark}

\begin{Lemma}
    Let $V$ be a Lorentzian vector space. Then the reverse Cauchy-Schwarz inequality holds on any future set. That is,
    $$\inner{u}{v} \geq \norm{u}\norm{v}$$
    for all $u, v \in C_t^{+}(V)$ for any $t \in V$ with $\inner{t}{t} > 1$.
\end{Lemma}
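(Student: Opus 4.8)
The plan is to reduce the Lorentzian reverse Cauchy--Schwarz inequality to the ordinary (positive-definite) Cauchy--Schwarz inequality on the spacelike factor $W$, followed by an elementary squaring argument whose only delicate point is bookkeeping of signs.

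First I would normalize. Since future sets are invariant under positive scaling of $t$ (as observed in the preceding remark), and the hypothesis $\inner{t}{t} > 0$ lets me rescale, I may assume $\inner{t}{t} = 1$ and fix the corresponding Lorentz decomposition $V = \R t \oplus W$ with $W = (\R t)^{\perp}$, on which $\inner{\cdot}{\cdot}$ is negative definite by Proposition \ref{Prop: lorentz decomposition}. Writing $u = \alpha_u t + w_u$ and $v = \alpha_v t + w_v$ with $w_u, w_v \in W$, I compute $\inner{u}{v} = \alpha_u \alpha_v + \inner{w_u}{w_v}$ and $\norm{u}^2 = \inner{u}{u} = \alpha_u^2 - \mathbf{n}(w_u)^2$, where $\mathbf{n}$ is the Wick-rotated positive definite norm, so that $\mathbf{n}(w_u)^2 = -\inner{w_u}{w_u}$, and similarly for $v$.

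Next I would apply the ordinary Cauchy--Schwarz inequality for the positive definite inner product $-\inner{\cdot}{\cdot}$ restricted to $W$. This gives $\lvert \inner{w_u}{w_v} \rvert \leq \mathbf{n}(w_u)\,\mathbf{n}(w_v)$, hence $\inner{w_u}{w_v} \geq -\mathbf{n}(w_u)\,\mathbf{n}(w_v)$ and therefore $\inner{u}{v} \geq \alpha_u \alpha_v - \mathbf{n}(w_u)\,\mathbf{n}(w_v)$. Since $u, v \in C_t^{+}(V)$, Lemma \ref{Lemma: alpha positive on future} supplies $0 \leq \mathbf{n}(w_u) \leq \alpha_u$ and $0 \leq \mathbf{n}(w_v) \leq \alpha_v$, so the right-hand side is nonnegative.

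Finally, setting $a = \alpha_u$, $b = \alpha_v$, $p = \mathbf{n}(w_u)$, $q = \mathbf{n}(w_v)$, both quantities $\inner{u}{v} \geq ab - pq$ and $\norm{u}\,\norm{v} = \sqrt{(a^2 - p^2)(b^2 - q^2)}$ are nonnegative, so it suffices to compare their squares. The difference $(ab - pq)^2 - (a^2 - p^2)(b^2 - q^2)$ expands and collapses to $(aq - bp)^2 \geq 0$, which yields $ab - pq \geq \norm{u}\,\norm{v}$ and hence the claim. The main obstacle, such as it is, lies entirely in the normalization and sign discipline: one must secure $ab \geq pq$ from Lemma \ref{Lemma: alpha positive on future} before squaring, since that is precisely what guarantees the left-hand side is nonnegative and makes passing to squares legitimate.
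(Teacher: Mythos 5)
Your proof is correct, but it takes a genuinely different route from the paper's. The paper first establishes $\inner{u}{v} \geq 0$ exactly as you do (via the Wick-rotated Cauchy--Schwarz on $W$ and Lemma \ref{Lemma: alpha positive on future}), but then splits off the case $\norm{v}=0$, normalizes $v' = v/\norm{v}$, and invokes Proposition \ref{Prop: lorentz decomposition} a second time for the decomposition $V = \R v' \oplus (\R v')^{\perp}$: the component $u' = u - \inner{u}{v'}v'$ lies in $(\R v')^{\perp}$, where the form is negative definite, and expanding $\inner{u'}{u'} \leq 0$ gives $\inner{u}{v'}^2 \geq \norm{u}^2$ directly. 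You instead stay in the single fixed decomposition $V = \R t \oplus W$ and reduce everything to the scalar inequality $(ab-pq)^2 - (a^2-p^2)(b^2-q^2) = (aq-bp)^2 \geq 0$, which I have checked. Your approach buys a uniform treatment with no case distinction on $\norm{v}=0$ and no second appeal to the decomposition proposition; the paper's approach is closer to the standard Gram-determinant proof of Cauchy--Schwarz and isolates more clearly where negative definiteness of the spacelike complement enters. You are also right to flag that the nonnegativity $ab \geq pq$, supplied by Lemma \ref{Lemma: alpha positive on future}, is the one point where squaring could otherwise go wrong. (As a side remark, the condition $\inner{t}{t} > 1$ in the statement is evidently a typo for $\inner{t}{t} > 0$; both you and the paper implicitly read it that way when rescaling to $\inner{t}{t}=1$.)
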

\begin{proof}
    We may assume $\inner{t}{t} = 1$. Let $u, v \in C_{t}^{+}(V)$. We start by showing that $\inner{u}{v} \geq 0$. 
    Using the regular Cauchy-Schwarz inequality for $(\cdot, \cdot)$ and Lemma $\ref{Lemma: alpha positive on future}$,
    $$\inner{u}{v} = \alpha_{u}\alpha_{v} + \inner{w_{u}}{w_{v}} = \alpha_{u}\alpha_{v} - (w_{u}, w_{v}) \geq \alpha_{u}\alpha_{v} - \mathbf{n}(w_{u})\mathbf{n}(w_{v}) \geq 0.$$
    Thus, if $\norm{v} = 0$ then $\inner{u}{v} \geq \norm{u}\norm{v}$ is trivial. Assume $\norm{v} > 0$, let $v' = v/\norm{v}$ and let $u' = u - \inner{u}{v'}v'$
    As seen in Proposition $\ref{Prop: lorentz decomposition}$, we can decompose $V$ as $V = \R v' \oplus (\R v')^{\perp}$. Since
    $$\inner{u'}{v'} = \inner{u - \inner{u}{v'}v'}{v'} = \inner{u}{v'} - \inner{u}{v'} = 0$$
    We see that $u' \in (\R v')^{\perp}$ so $\inner{u'}{u'} \leq 0$. Expanding this out,
    \begin{align*}
        0 &\geq \inner{u'}{u'} \\
        &= \inner{u - \inner{u}{v'}v'}{u - \inner{u}{v'}v'} \\
        &= \inner{u}{u} - 2\inner{u}{v'}^{2} + \inner{u}{v'}^{2}\inner{v'}{v'}\\
        &= \norm{u}^{2} - \inner{u}{v'}^{2}
    \end{align*}
    Thus, $\inner{u}{v'}^{2} \geq \norm{u}^{2}$. Since $\norm{u} \geq 0$ and we've shown that $\inner{u}{v'} = \inner{u}{v}/\norm{v} \geq 0$, we can take square roots to get $\inner{u}{v'} \geq \norm{u}$. Finally, scaling both sides by $\norm{v}$ gives us the desired result.
\end{proof}

\begin{Proposition}[Future cones in Lorentzian vector spaces and their spans]\label{Prop: Future cones in Lorentzian vector spaces and their spans}
    If $V$ is a Lorentzian vector space and $C_{t}^{+}(V)$ is any future set then
    \begin{enumerate}
        \item $(C_{t}^{+}(V), \norm{\cdot}|_{C_{t}^{+}(V)})$ is a hyperbolically normed cone satisfying the conditions of Theorem $\ref{Theorem: innerproductnondegenerate}$. 
        \item $V = \spn(C_{t}^{+}(V))$.
    \end{enumerate}
\end{Proposition}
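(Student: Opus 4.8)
The plan is to treat the two parts separately. Part (1) reduces to a sequence of axiom checks, all of which rest on two earlier results: the reverse Cauchy--Schwarz inequality on future sets (the Lemma immediately preceding this Proposition) and the Lorentz decomposition $V = \R t \oplus (\R t)^{\perp}$ with its negative-definiteness on $(\R t)^{\perp}$ (Proposition \ref{Prop: lorentz decomposition}). Part (2) is an application of the universal property of the span via Lemma \ref{Lemma: Uniqueness of spans}. Throughout I may normalize $\inner{t}{t} = 1$, since future sets are invariant under positive scaling of $t$, and then write $x = \alpha_x t + w_x$ with $w_x \in (\R t)^{\perp}$ as in the Wick-rotation setup.

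For the structural part of (1), I would first check that $C_{t}^{+}(V)$ is a proper cone. Closure under nonnegative scaling is immediate from bilinearity, and closure under addition follows because for $u,v \in C_{t}^{+}(V)$ the reverse Cauchy--Schwarz inequality gives $\inner{u}{v} \geq \norm{u}\norm{v} \geq 0$, so $\inner{u+v}{u+v} = \norm{u}^2 + 2\inner{u}{v} + \norm{v}^2 \geq 0$, while $\inner{u+v}{t} \geq 0$ is additive; properness follows from Lemma \ref{Lemma: alpha positive on future}, since $v,-v \in C_{t}^{+}(V)$ forces $\alpha_v = \inner{v}{t} = 0$ and hence $\mathbf{n}(w_v) \leq \alpha_v = 0$, giving $v = 0$. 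The norm axioms are then quick: homogeneity is clear, and the reverse triangle inequality is the computation $\norm{u+v}^2 = (\norm{u} + \norm{v})^2 + 2(\inner{u}{v} - \norm{u}\norm{v}) \geq (\norm{u}+\norm{v})^2$, again from reverse Cauchy--Schwarz. Finiteness and non-triviality are immediate (take $t$ itself, with $\norm{t} = 1$), and positive polarizability is purely algebraic: since every element of $C_{t}^{+}(V)$ is causal, $\norm{x}^2 = \inner{x}{x}$ there, and expanding both $\inner{v+2w}{v+2w} + \inner{v}{v}$ and $2\inner{v+w}{v+w} + 2\inner{w}{w}$ yields $2\inner{v}{v} + 4\inner{v}{w} + 4\inner{w}{w}$.

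The step I expect to carry the real content is strict hyperbolicity, i.e. the equality case of the reverse Cauchy--Schwarz inequality. From $\norm{u+v} = \norm{u} + \norm{v}$ the same expansion gives $\inner{u}{v} = \norm{u}\norm{v}$, and I would establish linear dependence by cases. When one vector, say $v$, is timelike ($\norm{v} > 0$), I would normalize $v' = v/\norm{v}$, decompose $u = \inner{u}{v'}v' + u'$ with $u' \in (\R v')^{\perp}$, and use that $\inner{\cdot}{\cdot}$ is negative definite on $(\R v')^{\perp}$ (Proposition \ref{Prop: lorentz decomposition}); equality forces $\inner{u'}{u'} = 0$, hence $u' = 0$ and $u = \lambda v$ with $\lambda = \inner{u}{v'}/\norm{v} \geq 0$. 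The genuinely delicate case is when both vectors are null ($\norm{u} = \norm{v} = 0$): here $\inner{u}{v} = 0$ translates, through the Wick rotation, into $(w_u, w_v) = \alpha_u \alpha_v = \mathbf{n}(w_u)\mathbf{n}(w_v)$, which is the equality case of the \emph{ordinary} (positive-definite) Cauchy--Schwarz inequality for $\mathbf{n}$; this forces $w_u, w_v$ to be positively parallel, and combining with the nullity relations $\alpha_u = \mathbf{n}(w_u)$, $\alpha_v = \mathbf{n}(w_v)$ yields $u = c\,v$ with $c \geq 0$. The remaining mixed case ($\norm{u} > 0 = \norm{v}$) I would dispose of by the timelike argument applied to $u$: since $\inner{v}{u} = 0$, the component of $v$ along $u$ vanishes, so $v \in (\R u')^{\perp}$ with $\inner{v}{v} = 0$, forcing $v = 0$.

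For part (2), I would show $C_{t}^{+}(V)$ generates $V$ and then invoke uniqueness of spans. Generation is the observation that for any $v = \alpha_v t + w_v \in V$ and any $\lambda$ large enough that $\alpha_v + \lambda \geq \mathbf{n}(w_v)$ and $\alpha_v + \lambda \geq 0$, both $v + \lambda t$ and $\lambda t$ lie in $C_{t}^{+}(V)$, so $v = (v + \lambda t) - \lambda t$ is a difference of future vectors. Since the inclusion $C_{t}^{+}(V) \hookrightarrow V$ is an injective $\R_{+}$-linear map whose image generates $V$, the formal differences $a - b$ with $a,b \in C_{t}^{+}(V)$ biject with their actual values in $V$ --- because $a + b' = a' + b$ in the cone is literally $a - b = a' - b'$ in $V$ --- and one checks directly that $(V, \iota)$ satisfies the universal property defining a span (the extension $\tilde f(a-b) := f(a) - f(b)$ is well-defined by $\R_{+}$-linearity of $f$, linear, and unique because $C_{t}^{+}(V)$ generates $V$). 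By Lemma \ref{Lemma: Uniqueness of spans}, this gives the canonical identification $V = \spn(C_{t}^{+}(V))$.
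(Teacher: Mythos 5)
Your proposal is correct and follows essentially the same route as the paper: the same Lorentz decomposition and Wick-rotation bookkeeping, the reverse Cauchy--Schwarz inequality for the cone and norm axioms, the same case split for strict hyperbolicity (with the null--null case resolved via the equality case of the ordinary Cauchy--Schwarz inequality for $\mathbf{n}$), and the same ``shift by $\lambda t$'' device for showing $C_{t}^{+}(V)$ generates $V$. The only cosmetic differences are that you check positive polarizability explicitly (the paper leaves it implicit) and that you treat the timelike equality case by projecting onto $(\R v')^{\perp}$ and invoking negative definiteness there, where the paper runs a Gram-determinant argument that reduces to the same fact.
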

\begin{proof}
    Without loss of generality, assume that $\inner{t}{t} = 1$. We continue to use the notation from the previous results: So $V = (\R t) \oplus W$, and an element $x$ in $V$ is written $x = \alpha_x t + w_x$ according to this Lorentz decomposition. First, $0 \in C_{t}^{+}(V)$. If $v, -v \in C_{t}^{+}(V)$ then $\alpha_{v}, \alpha_{-v} = - \alpha_{v} \geq 0$. It follows that $0 \leq \inner{v}{v} = \inner{w_{v}}{w_{v}} \leq 0$ so by definiteness, $w_{v} = 0$. Thus $v = 0$, which shows that $C_t^+(V)$ is proper. Now we show that $\norm{\cdot}$ is hyperbolic and that $C_{t}^{+}(v)$ is convex. If $u, v \in C_{t}^{+}(V)$ then $\inner{u + v}{t} = \inner{u}{t} + \inner{v}{t} \geq 0$. Clearly, $C_{t}^{+}(V)$ is closed under non-negative scaling and
    $$\norm{u + v}^{2} = \inner{u + v}{u + v} = \norm{u}^{2} + \norm{v}^{2} + 2\inner{u}{v} \geq \norm{u}^{2} + \norm{v}^{2} + 2\norm{u}\norm{v} = (\norm{u} + \norm{v})^{2}$$
    shows both that $\norm{\cdot}$ is a hyperbolic norm and that $C_{t}^{+}(V)$ is closed under addition. We now show that $\inner{u}{v} = \norm{u}\norm{v}$ only when $u$ and $v$ are collinear. Suppose $\inner{u}{v} = \norm{u}\norm{v}$. First, consider the special case when $\norm{u} = \norm{v} = 0$. Then $\inner{u}{v} = 0$. The equation $\norm{u}^{2} = 0$ implies that $\alpha_{u}^{2} = \mathbf{n}(w_{u})^{2}$. Since $u \in C_{t}^{+}(v)$, both of these quantities are non-negative. Thus $\alpha_{u} = \mathbf{n}(w_{u})$. Similarly, $\norm{v}^{2} = 0$ implies $\alpha_{v} = \mathbf{n}(w_{v})$. Finally, $\inner{u}{v} = 0$ gives $\alpha_{u}\alpha_{v} = (w_{u}, w_{v})$. Substituting the first two equations, we get $(w_{u}, w_{v}) = \mathbf{n}(w_{u})\mathbf{n}(w_{v})$. By the regular Cauchy-Schwarz inequality for $(\cdot, \cdot)$, we conclude that $w_{u}$ and $w_{v}$ are collinear. Hence, $w_{u} = \lambda w_{v}$ for some $\lambda \in \R$. Then
    $$\alpha_{u} = \mathbf{n}(w_{u}) = \mathbf{n}(\lambda w_{v}) = |\lambda|\mathbf{n}(w_{v}) = |\lambda|\alpha_{v}.$$
    It remains to be shown that $\lambda \geq 0$. If $\mathbf{n}(w_{v}) = 0$ then $\alpha_{u} = \alpha_{v} = 0$ and $w_{u} = w_{v} = 0$ so $u = v = 0$. Otherwise,
    $$\lambda = \frac{(w_{u}, w_{v})}{\mathbf{n}({w_{v}})^{2}} = \frac{\mathbf{n}(w_{u})\mathbf{n}({w_{v}})}{\mathbf{n}({w_{v}})^{2}} = \frac{\mathbf{n}({w_{u}})}{\mathbf{n}({w_{v}})} \geq 0.$$
    Thus, $u = \alpha_u t + w_u = \lambda (\alpha_v t + w_v) = \lambda v$. This concludes the case where $\norm{u} = \norm{v} = 0$. Without loss of generality, assume now $\norm{v} \neq 0$ (w.l.o.g. $\norm{v} = 1$). Define the Gram matrix
    $$G = \begin{pmatrix}
        \inner{u}{u} & \inner{v}{u} \\
        \inner{u}{v} & \inner{v}{v} \\
    \end{pmatrix}.$$
    Then by assumption
    $$\det(G) = \norm{u}^{2}\norm{v}^{2} - \inner{u}{v} = 0.$$
    Hence there exists a nonzero $c = (c_{1}, c_{2}) \in \R^{2}$ such that $Gc = 0$. That is,
    $$\inner{c_{1}u +c_{2}v}{u} = 0, \text{ and }\inner{c_{1}u + c_{2}v}{v} = 0$$
    so $x = c_{1}u + c_{2}v$ is orthogonal to both $u$ and $v$. It follows that $\inner{x}{x} = 0$. Since $\norm{v} = 1$,  we may form the decomposition $\R v \oplus (\R v)^{\perp}$, and $x \in (\R v)^{\perp}$ where the inner product is negative definite. This would imply that $x = 0$ so $u$ and $v$ are collinear.
        
    To show that $\spn(C_{t}^{+}(V)) = V$, we aim to write any given $x \in V$ as $x = v_{1} - v_{2}$ where $v_{1}, v_2 \in C_{t}^{+}(V)$ are of the form
    $$v_{1} = \lambda t + \frac{1}{2}x \text{ and } v_{2} = \lambda t - \frac{1}{2}x$$
    for some $\lambda \in \R$. We must pick $\lambda$ such that the following four conditions hold:
    \begin{enumerate}
        \item $\norm{v_{1}}^{2} \geq 0 \implies \lambda^{2} + \lambda \alpha_{x} + \frac{1}{4}\inner{x}{x} \geq 0;$
        \item $\norm{v_{2}}^{2} \geq 0 \implies \lambda^{2} - \lambda \alpha_{x} + \frac{1}{4}\inner{x}{x} \geq 0;$
        \item $\inner{v_{1}}{t} \geq 0 \implies \lambda \geq -\frac{1}{2}\alpha_{x};$
        \item $\inner{v_{2}}{t} \geq 0 \implies \lambda \geq \frac{1}{2}\alpha_{x}.$
    \end{enumerate}
    It is easily seen that all four inequalities can be satisfied by choosing a sufficiently large $\lambda$. We conclude that $V = \spn(C_t^+(V))$.
\end{proof}

\begin{Definition}
    Let $V$ be a Lorentzian vector space. We will refer to the future and past sets $C^{+}_t(V)$ and $C^{-}_{t}(V)$ for any $\inner{t}{t} > 0$ as future and past \textit{cones}, respectively. This is justified by Proposition $\ref{Prop: Future cones in Lorentzian vector spaces and their spans}$.
\end{Definition}

\begin{Remark}
    Suppose $(N, \norm{\cdot})$ is any finite positively polarizable hyperbolically normed cone such that $\spn(N)$ with the inner product $\inner{\cdot}{\cdot}$ induced by $\norm{\cdot}$ is Lorentzian. Then $\norm{\cdot}$ is automatically non-trivial, so there exists some $t \in N$ with $\norm{t} = 1$. This implies that $N$ is contained in the future cone $C_{t}^{+}(\spn N)$ which in turn means that $\norm{\cdot}$ is also strictly hyperbolic. In particular, every cone whose span is Lorentzian, must be a subcone of a future cone.
\end{Remark}

For the purposes of the next result, we refer to a subset $S$ of a vector space $V$ as \textit{proper} if $S \cap (-S) = \{0\}$.

\begin{Proposition}
\label{Theorem: proper implies Lorentz}
    Let $V$ be a vector space with an inner product $\inner{\cdot}{\cdot}$. Then $V$ is Lorentzian if and only if $V$ contains a proper future set.
\end{Proposition}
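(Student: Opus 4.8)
The plan is to prove the two implications separately, obtaining the forward direction almost for free from the earlier results and reserving the real (if short) work for the converse.

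For the forward implication, suppose $(V,\inner{\cdot}{\cdot})$ is Lorentzian. By Proposition \ref{Prop: lorentz decomposition}(i) there exists $t \in V$ with $\inner{t}{t} = 1$, so that $C_t^+(V)$ is a genuine future set. Proposition \ref{Prop: Future cones in Lorentzian vector spaces and their spans}(i) then tells us that $C_t^+(V)$ is a proper cone; indeed, properness in the sense $C_t^+(V) \cap (-C_t^+(V)) = \{0\}$ is exactly the first assertion established in that proof (from $v,-v \in C_t^+(V)$ one deduces $v = 0$). Hence $V$ contains a proper future set and this direction is immediate.

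For the converse, suppose $V$ contains a proper future set, say $C_s^+(V)$ with $\inner{s}{s} > 0$ and $C_s^+(V) \cap (-C_s^+(V)) = \{0\}$. Since future sets are unchanged under positive rescaling of the defining vector, I would normalize by setting $t := s/\sqrt{\inner{s}{s}}$, so that $\inner{t}{t} = 1$ and $C_t^+(V) = C_s^+(V)$ remains proper. I would first record the orthogonal splitting $V = \R t \oplus (\R t)^{\perp}$, which needs only $\inner{t}{t} \neq 0$: every $v$ decomposes as $v = \inner{v}{t}t + (v - \inner{v}{t}t)$ with the second summand in $(\R t)^{\perp}$, and $\R t \cap (\R t)^{\perp} = \{0\}$ because $\lambda t \in (\R t)^{\perp}$ forces $\lambda = \inner{\lambda t}{t} = 0$. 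The restriction of $\inner{\cdot}{\cdot}$ to $\R t$ is positive definite since $\inner{t}{t} = 1$.

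The crux is to show that $\inner{\cdot}{\cdot}$ is negative definite on $W := (\R t)^{\perp}$, and this is the one step where properness is genuinely used; it is the main (though brief) obstacle. Let $w \in W \setminus \{0\}$ and suppose toward a contradiction that $\inner{w}{w} \geq 0$. Then $w$ is causal, and since $\inner{\pm w}{t} = \pm \inner{w}{t} = 0 \geq 0$, both $w$ and $-w$ lie in $C_t^+(V)$; that is, $w \in C_t^+(V) \cap (-C_t^+(V))$. Properness then forces $w = 0$, a contradiction, so $\inner{w}{w} < 0$ for every nonzero $w \in W$. Combined with the splitting $V = \R t \oplus W$ and positive definiteness on $\R t$, this is precisely a decomposition of the form required by Definition \ref{Definition: Lorentzianinnerproducts}, whence $V$ is Lorentzian. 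I do not anticipate any difficulty beyond applying properness to both $w$ and $-w$ simultaneously; the entire converse rests on the observation that a nonzero $t$-orthogonal causal vector would be a nonzero element of $C_t^+(V) \cap (-C_t^+(V))$.
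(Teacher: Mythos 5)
Your proof is correct and follows essentially the same route as the paper: the forward direction is delegated to the earlier results on future cones, and the converse uses the splitting $V = \R t \oplus (\R t)^{\perp}$ together with the observation that a nonzero causal $w \perp t$ would lie in $C_t^+(V) \cap (-C_t^+(V))$, contradicting properness. The extra details you supply (normalizing $t$ and verifying the sum is direct) are exactly the steps the paper leaves implicit.
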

\begin{proof}
    We already showed that every Lorentzian vector space contains a proper future cone (in fact, every future cone is proper). To show the converse, suppose $C_{t}^{+}(V)$ is a proper future cone. Write $V = \R t + (\R t)^{\perp}$. Then $\inner{\cdot}{\cdot}$ is positive definite on $\R t$. This means the sum $ \R t \oplus (\R t)^{\perp}$ is direct. We show that $\inner{\cdot}{\cdot}$ is negative definite on $(\R t)^{\perp}$. If $w \in (\R t)^{\perp}$ then $\inner{w}{t} = 0$. If in addition, $\inner{w}{w} \geq 0$ then $w \in C_{t}^{+}(V)$. By the same logic, $-w \in C_{t}^{+}(V)$ so by assumption, $w = 0$. Thus, $\inner{\cdot}{\cdot}$ is negative definite on $(\R t)^{\perp}$.
\end{proof}

Note that in the above proof, we did not assume that the set $C_{t}^{+}(V)$ is a convex cone. We only assumed that $C_{t}^{+}(V) \cap C_{t}^{-}(V) = \{0\}$.

Let now $(F,\norm{\cdot})$ be an (abstract) hyperbolically normed cone satisfying the assumptions we fixed at the beginning of this section.

\begin{Lemma}\label{Lemma: positive on interior}
    If $\spn(F)$ (equipped with some topology) is a topological vector space, then $\norm{\cdot}$ is strictly positive on $\Int(F)$.
\end{Lemma}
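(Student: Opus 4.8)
The plan is to argue by contradiction, exploiting that $\norm{v}^2 = \inner{v}{v} \ge 0$ on all of $F$ (Lemma \ref{Lemma: inner product properties}(i)) together with the nondegeneracy of $\inner{\cdot}{\cdot}$ on $\spn(F)$ (Theorem \ref{Theorem: innerproductnondegenerate}). The key geometric input is that an interior point of $F$ can be perturbed in \emph{every} direction while remaining inside $F$; this is what converts the one-sided bound $\inner{\cdot}{\cdot} \ge 0$ on $F$ into a two-sided constraint on the pairing of $v$ with an arbitrary vector.

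First I would record that $0 \notin \Int(F)$. Indeed, if $0$ were interior, then $\Int(F)$ would be a neighborhood of $0$, hence absorbing (Remark \ref{Rem: Standard properties of TVSs}(ii)); so for any $y \in \spn(F)$ there would be some $s > 0$ with $s y \in F$ and $-s y \in F$, and since $F$ is closed under nonnegative scaling this forces $y, -y \in F$. Properness then gives $y = 0$, so $\spn(F) = \{0\}$, contradicting non-triviality (which provides $t_{*} \neq 0$). Consequently every $v \in \Int(F)$ is nonzero.

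Now fix $v \in \Int(F)$ and suppose toward a contradiction that $\norm{v} = 0$, i.e.\ $\inner{v}{v} = 0$. Since $v \neq 0$ and $\inner{\cdot}{\cdot}$ is nondegenerate, there is $y \in \spn(F)$ with $a := \inner{v}{y} \neq 0$. Because $v$ is interior, $\Int(F) - v$ is a neighborhood of $0$ and hence absorbing, so there is $\delta > 0$ with $v + s y \in F$ for all $|s| < \delta$. For such $s$, combining Lemma \ref{Lemma: inner product properties}(i) with $\inner{v}{v} = 0$ gives
\begin{equation*}
0 \le \norm{v + s y}^2 = \inner{v + sy}{v + sy} = 2 s\, a + s^2 \inner{y}{y}.
\end{equation*}
Choosing $s$ of sign opposite to $a$ and sufficiently small in modulus makes the linear term $2 s a < 0$ dominate the quadratic term $s^2 \inner{y}{y}$, so the right-hand side becomes strictly negative, a contradiction. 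Hence $\inner{v}{v} > 0$, i.e.\ $\norm{v} > 0$.

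The argument is essentially complete; the only point requiring care is the correct invocation of the absorbing property of $0$-neighborhoods in a topological vector space (Remark \ref{Rem: Standard properties of TVSs}(ii)), which is what guarantees both that $0$ cannot be interior and that an interior point admits the needed two-sided perturbation in an arbitrary direction. I expect no genuine obstacle beyond this bookkeeping.
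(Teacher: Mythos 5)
Your proof is correct, and its core mechanism is the same as the paper's: assume an interior vector $v$ with $\norm{v}=0$, perturb it within $F$ along a direction whose pairing with $v$ is nonzero, and contradict the fact that $\inner{u}{u}=\norm{u}^2\ge 0$ for all $u\in F$. The one substantive difference is where the perturbation direction comes from: you invoke the nondegeneracy of the induced inner product (Theorem \ref{Theorem: innerproductnondegenerate}) to produce some $y$ with $\inner{v}{y}\neq 0$ and then use a two-sided perturbation $v+sy$, whereas the paper fixes $y=t_*$, notes $v\notin\R t_*$ (since $0\notin\Int(F)$), and applies strict hyperbolicity to get the sign information $\inner{v}{t_*}>0$ directly, so that only the one-sided perturbation $v-\varepsilon t_*$ is required. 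Both routes are sound; yours is slightly more modular, since nondegeneracy is already established at this point in the paper, while the paper's version re-extracts the needed strict inequality from strict hyperbolicity. Your preliminary observation that $0\notin\Int(F)$ (via the absorbing property and properness) is also needed and correctly carried out.
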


\begin{proof}
    Suppose for contradiction that $v \in \Int(F)$ with $\norm{v} = 0$. If $v = \lambda t_{*}$ then $\norm{v} = \lambda = 0$ but $0 \notin \Int(F)$ (because $F$ is a proper cone). Thus $v$ is not a scalar multiple of $t_{*}$. It follows by strict hyperbolicity that
    $$\norm{v + t_{*}} > \norm{v} + \norm{t_{*}} = 1.$$
    Then
    $$2\inner{v}{t_{*}} = \norm{v + t_{*}}^{2} - \norm{v}^{2} - \norm{t}^2 = \norm{v + t_{*}}^{2} - 1 > 0.$$
    Finally, since $v \in \Int(F)$, we can take $\varepsilon > 0$ small enough so that $u = v - \varepsilon t_{*} \in F$ and that 
    $$\norm{u} = \inner{u}{u} = \norm{v}^{2} - 2\varepsilon\inner{v}{t_{*}} + \varepsilon^{2}\norm{t_{*}}^{2} = -2\varepsilon \inner{v}{t_{*}} + \varepsilon^{2} < 0,$$
    which is a contradiction.
\end{proof}

\begin{Definition}[Core]
    The \textit{core} of a cone $F$ is defined as
    $$\core(F) = \{x \in F: \forall v \in \spn(F)\; \exists \, \epsilon_{v} > 0 : x + \epsilon_{v}v \in F\}.$$
\end{Definition}

We come to one of our main results of this section, where we give sufficient conditions under which the inner product on $\spn(F)$ induced by a sufficiently regular, positively polarizable hyperbolically normed cone $(F,\norm{\cdot})$ is Lorentzian.

\begin{Theorem}[Lorentzianity of the induced inner product] \label{Theorem: innerproductLorentzian}
    Let $(F,\norm{\cdot})$ be a finite, positively polarizable hyperbolically normed proper cone such that there exists $t_* \in F$ with $\norm{t_*} = 1$ and $\norm{\cdot}$ is strictly hyperbolic. Suppose, in addition, that $(F,\norm{\cdot})$ satisfies one one of the following conditions:
    \begin{enumerate}
        \item[(i)] $F$ is a future cone in $\spn(F)$;
        \item[(ii)] There exists $t \in \core(F)$ with $\norm{t} = 1$;
        \item[(iii)] There exists a topology on $\spn(F)$ with which it is a TVS and $\inner{v}{v} = 0$ for all $v \in \partial F$.
    \end{enumerate}
    Then the induced inner product $\inner{\cdot}{\cdot}$ on $\spn(F)$ is Lorentzian.
\end{Theorem}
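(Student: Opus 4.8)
The plan is to prove all three cases uniformly by producing a vector $t \in \spn(F)$ with $\inner{t}{t}=1$ for which the future set $C_t^+(\spn F)$ is proper, and then invoking Proposition \ref{Theorem: proper implies Lorentz}. Equivalently, once such a $t$ is fixed it suffices to check that $\inner{\cdot}{\cdot}$ is negative definite on $(\R t)^\perp$: if $v \in C_t^+(\spn F) \cap C_t^-(\spn F)$ then $\inner{v}{t}=0$ and $\inner{v}{v}\ge 0$, so $v \in (\R t)^\perp$ has nonnegative square-norm, and properness of the future set is exactly the assertion that no nonzero such $v$ exists. Case (i) is then immediate: reading ``$F$ is a future cone'' as the equality $F = C_{t_*}^+(\spn F)$, one has $-F = C_{t_*}^-(\spn F)$, so the hypothesis that $F$ is proper, $F\cap(-F)=\{0\}$, says precisely that this future set is proper, and Proposition \ref{Theorem: proper implies Lorentz} gives Lorentzianity.

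The substance of the argument is case (ii), which I expect to be the main obstacle. Take $t \in \core(F)$ with $\norm{t}=1$, so $\inner{t}{t}=1$ by Lemma \ref{Lemma: inner product properties}(i), and let $w \in (\R t)^\perp$ with $\inner{w}{w}\ge 0$; the goal is $w=0$. Applying the core property in the directions $w$ and $-w$ yields $\epsilon_+,\epsilon_->0$ with $a := t+\epsilon_+ w \in F$ and $b := t-\epsilon_- w \in F$. Writing $s := \inner{w}{w}\ge 0$ and using $\inner{w}{t}=0$, one computes $\norm{a}^2 = 1+\epsilon_+^2 s$, $\norm{b}^2 = 1+\epsilon_-^2 s$, and $\inner{a}{b} = 1-\epsilon_+\epsilon_- s$. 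The reverse Cauchy--Schwarz inequality of Lemma \ref{Lemma: inner product properties}(ii) gives $\inner{a}{b}\ge \norm{a}\norm{b}\ge 0$; since the left-hand side is thereby nonnegative, squaring and cancelling the common terms yields $(\epsilon_++\epsilon_-)^2 s \le 0$, and as $\epsilon_++\epsilon_->0$ this forces $s=0$, i.e.\ $\inner{w}{w}=0$.

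With $s=0$ we now have $\norm{a}=\norm{b}=1$ and $\inner{a}{b}=1=\norm{a}\norm{b}$, i.e.\ equality in reverse Cauchy--Schwarz; by Lemma \ref{Lemma: inner product properties}(ii) this means $\norm{a+b}=\norm{a}+\norm{b}$, so strict hyperbolicity forces $a$ and $b$ to be linearly dependent in $F$. As both are nonzero they are proportional, say $a=\lambda b$ (or $b=\lambda a$) with $\lambda>0$; substituting $a=t+\epsilon_+ w$ and $b=t-\epsilon_- w$ and pairing the proportionality relation against $t$ (using $\inner{w}{t}=0$, $\inner{t}{t}=1$) forces $\lambda=1$, and the remaining $w$-component then gives $(\epsilon_++\epsilon_-)w=0$, hence $w=0$. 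Thus $\inner{\cdot}{\cdot}$ is negative definite on $(\R t)^\perp$, the future set $C_t^+(\spn F)$ is proper, and $\spn(F)$ is Lorentzian.

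Finally, case (iii) reduces to case (ii). Since $\inner{v}{v}=0$ for every $v\in\partial F$ while $\inner{t_*}{t_*}=1\ne 0$, the distinguished vector $t_*$ cannot lie on $\partial F$, so $t_*\in\Int(F)$ (consistent with Lemma \ref{Lemma: positive on interior}, which guarantees positivity on the interior). In a topological vector space every interior point of $F$ lies in $\core(F)$: for $x\in\Int(F)$ with open neighbourhood $U\subseteq F$ and any direction $v$, the map $\epsilon\mapsto x+\epsilon v$ is continuous and sends $0$ to $x\in U$, so $x+\epsilon v\in U\subseteq F$ for all sufficiently small $\epsilon>0$. Applying this to $x=t_*$ gives $t_*\in\core(F)$ with $\norm{t_*}=1$, placing us in case (ii). The chief difficulty throughout is the inequality analysis of case (ii), namely combining the quantitative reverse Cauchy--Schwarz bound with its rigidity (equality) statement and strict hyperbolicity to first annihilate $\inner{w}{w}$ and then $w$ itself; a minor point to settle is the precise reading of ``future cone'' in (i) as the equality $F=C_{t_*}^+(\spn F)$, so that the reduction to Proposition \ref{Theorem: proper implies Lorentz} is literal.
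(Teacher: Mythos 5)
Your proposal is correct, and cases (i) and (iii) follow the paper's proof essentially verbatim: (i) is the immediate application of Proposition \ref{Theorem: proper implies Lorentz}, and (iii) is reduced to (ii) by observing that $t_*$ lies in $\Int(F)\subseteq\core(F)$ (the paper invokes Lemma \ref{Lemma: positive on interior} to identify $\Int(F)=\{v\in F:\norm{v}>0\}$, whereas you argue more directly that $\norm{t_*}=1\neq 0$ excludes $t_*$ from $\partial F$; both work). Where you genuinely diverge is the core case (ii). The paper fixes $w\in(\R t)^\perp$ with $\inner{w}{w}\geq 0$, passes to the plane $U=\spn(t,w)$ on which the form is positive semidefinite, and observes that the induced seminorm $\ell=\sqrt{\inner{\cdot}{\cdot}}$ agrees with $\norm{\cdot}$ on $U\cap F$ and satisfies the ordinary triangle inequality; combined with the reverse triangle inequality this forces additivity of $\norm{\cdot}$ on all of $U\cap F$, so strict hyperbolicity collapses $U\cap F$ into $\R t$, and a single application of the core property ($t+\epsilon w\in F$) then gives $w\in\R t\cap(\R t)^\perp=\{0\}$. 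You instead apply the core property in both directions $\pm w$ to produce $a=t+\epsilon_+w$ and $b=t-\epsilon_-w$ in $F$, use the quantitative reverse Cauchy--Schwarz inequality to derive $(\epsilon_++\epsilon_-)^2\inner{w}{w}\leq 0$ and hence $\inner{w}{w}=0$, and only then invoke the equality case of Lemma \ref{Lemma: inner product properties}(ii) together with strict hyperbolicity to force $a=b$ and $w=0$. Your computation is elementary and avoids introducing the auxiliary Euclidean structure on $U$, at the cost of using the core hypothesis twice and splitting the argument into a quantitative step and a rigidity step; the paper's two-plane argument is slicker in that the triangle/reverse-triangle tension does both jobs at once. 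Both are complete proofs.
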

\begin{proof}
    Let us suppose $(i)$ holds, so $F = C_{t}^{+}(\spn F)$ for some $t \in \spn(F)$ with $\inner{t}{t} = 1$. Then $C_{t}^{+}(\spn F)$ is a proper cone and by Proposition $\ref{Theorem: proper implies Lorentz}$, $\spn(F)$ is Lorentzian. 
    
    Now, let us assume \textit{(ii)}. Let $\spn(F) = \R t \oplus (\R t)^{\perp}$ be an orthogonal decomposition of $\spn(F)$ and suppose $w \in (\R t)^{\perp}$ with $\inner{w}{w} \geq 0$. Let $U = \spn(t, w)$. Then $\inner{\cdot}{\cdot}|_{U \times U}$ is a positive definite inner product, so it induces a norm $\ell: U \longrightarrow \R$,
    $$\ell(u) = \sqrt{\inner{u}{u}}$$
    Note that if $u \in U \cap F$ then $\ell(u) = \norm{u}$. That is, $\ell$ and $\norm{\cdot}$ agree on $U \cap F$. Hence, $\norm{\cdot}$ satisfies both the triangle inequality and the reverse triangle on $U \cap F$, i.e.,
    $$\norm{u + u'} = \norm{u} + \norm{u'}$$
    for all $u, u' \in U \cap F$. By assumption of strict hyperbolicity, any such $u$ and $u'$ are linearly dependent. Taking $u' = t$, we see that $U \cap F \subseteq \R t$. There exists an $\epsilon > 0$ such that $t + \epsilon w \in F$. Since $t + \epsilon w \in U$, we get $t + \epsilon w \in U \cap F$ so $w \in \R t$, which implies $w = 0$. This shows the claim under the assumption \textit{(ii)}.
    
    Let us now assume $(iii)$. If $\spn(F)$ is a TVS and $\inner{v}{v} = 0$ for all $v \in \partial F$ then, in particular, $\norm{\cdot}$ is zero in $(\partial F) \cap F$. Lemma $\ref{Lemma: positive on interior}$ implies that that
    $$\Int(F) = \{v \in F: \norm{v} > 0\}$$
    Since $\Int(F)$ is open in $\spn(F)$ and $t \in \Int(F) \subseteq \core(F)$, condition \textit{(ii)} is satisfied.
\end{proof}

We close this subsection with a characterization of $F \subseteq \spn(F)$ as a future cone via self-duality.

\begin{Definition}[Dual cone]
    Let $(V,\inner{\cdot}{\cdot})$ be a Lorentzian vector space and $F \subseteq C(V)$ be a linear cone. The \textit{dual} of $F$ is 
    $$F^{*} = \{v \in C(V): \inner{v}{x} \geq 0,\;\forall x \in F\}$$
    $F$ is said to be \textit{self dual} if $F = F^{*}$.
\end{Definition}

\begin{Remark}
    Any cone $F$ as above is a subset of its dual $F^{*}$, which is itself a linear cone contained in $C(V)$. This follows directly from the reverse Cauchy-Schwarz inequality.
\end{Remark}

\begin{Proposition}[Future cones and self-duality]
    Let $(F,\norm{\cdot})$ be a positively polarizable, finite, proper, hyperbolically normed cone such that the induced inner product on $\spn(F)$ is Lorentzian. Then $F \subseteq \spn(F)$ is a future cone if and only if $F$ is self-dual and there is $t \in \core(F)$ with $\norm{t} \neq 0$.
\end{Proposition}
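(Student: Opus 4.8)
The plan is to prove both implications by double inclusion, exploiting the two reverse Cauchy--Schwarz inequalities at our disposal: the intrinsic one for the hyperbolically normed cone $F$ from Lemma~\ref{Lemma: inner product properties}(ii), and the one valid on any future set of the Lorentzian space $V := \spn(F)$. Throughout I will write $V = \spn(F)$ and, whenever a distinguished vector $t$ with $\norm t \neq 0$ is available, normalize it to $\inner t t = 1$; this is harmless because $F$, and the core, are closed under positive scaling.

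For the forward direction I would suppose $F = C_t^+(V)$ with $\inner t t = 1$ and establish self-duality and $t \in \core(F)$ separately. For self-duality, the inclusion $F \subseteq F^*$ always holds (it is exactly the reverse Cauchy--Schwarz inequality, as recorded in the Remark following the definition of the dual cone); for the reverse inclusion I would take $v \in F^*$ and use $t \in F$ to get $\inner v t \geq 0$, whence $v \in C_t^+(V) = F$. For the core condition I would fix an arbitrary $w \in V$ and note that the two quantities $\inner{t+\eps w}{t+\eps w} = 1 + 2\eps\inner w t + \eps^2 \inner w w$ and $\inner{t+\eps w}{t} = 1 + \eps\inner w t$ both tend to $1$ as $\eps \to 0^+$, so both are nonnegative for all small enough $\eps > 0$; hence $t + \eps w \in C_t^+(V) = F$ for such $\eps$, giving $t \in \core(F)$, while $\norm t = 1 \neq 0$.

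For the converse I would assume $F = F^*$ and fix $t \in \core(F) \subseteq F$ with $\norm t \neq 0$, normalized to $\inner t t = 1$, and prove $F = C_t^+(V)$ by two inclusions. First, $F \subseteq C_t^+(V)$: each $v \in F$ is causal by Lemma~\ref{Lemma: inner product properties}(i), and the intrinsic reverse Cauchy--Schwarz inequality on $F$ gives $\inner v t \geq \norm v \norm t = \norm v \geq 0$, so $v \in C_t^+(V)$. Second, $C_t^+(V) \subseteq F$: given $u \in C_t^+(V)$ and any $x \in F$, the first inclusion shows $x \in C_t^+(V)$, so the reverse Cauchy--Schwarz inequality on the future set $C_t^+(V)$ yields $\inner u x \geq \norm u \norm x \geq 0$; since this holds for all $x \in F$ and $u$ is causal, $u \in F^* = F$. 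Combining the inclusions gives $F = C_t^+(V)$.

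All the computations here are elementary; the one point that needs care is the logical ordering in the converse, where one must establish $F \subseteq C_t^+(V)$ via the cone's own reverse Cauchy--Schwarz \emph{before} the future-set reverse Cauchy--Schwarz can legitimately be applied to pairs $u,x$ with $u \in C_t^+(V)$ and $x \in F$. I also expect that the converse in fact uses only the weaker fact that some $t \in F$ has $\norm t \neq 0$ (equivalently, non-triviality); the stronger statement $t \in \core(F)$ is precisely what the forward direction produces, so the two hypotheses are matched exactly.
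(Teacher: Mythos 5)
Your proof is correct and, in both the core computation and the two inclusions of the converse, it coincides with the paper's argument step for step; your closing observation that the converse only uses the existence of some $t \in F$ with $\norm{t} \neq 0$ (not the full core condition) is also accurate for the paper's proof. The one place you genuinely diverge is the inclusion $F^* \subseteq F$ in the forward direction: you read the causality of $v \in F^*$ directly off the definition of the dual cone (which the paper declares as a subset of $C(V)$), so that $\inner{v}{t} \geq 0$ immediately yields $v \in C_t^+(V) = F$. The paper instead does not lean on that constraint and proves $\inner{v}{v} \geq 0$ by contradiction: assuming $\inner{v}{v} < 0$, it forms the null vector $x = t + w_v/\mathbf{n}(w_v) \in F$ and computes $\inner{v}{x} = \alpha_v - \mathbf{n}(w_v) < 0$, contradicting $v \in F^*$. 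Your shortcut is legitimate under the stated definition and is shorter; the paper's longer route buys the slightly stronger fact that even the unconstrained dual $\{v \in \spn(F) : \inner{v}{x} \geq 0 \ \forall x \in F\}$ already equals $F$, i.e., the causality requirement in the definition of $F^*$ is redundant for future cones.
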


\begin{proof}
    Suppose $F = C_{t}^{+}(F)$ for some $t \in \spn(F)$ with $\inner{t}{t} = 1$, i.e., $F$ is a future cone. If $v \in \spn(F)$ and $\epsilon > 0$ then
    $$\inner{t + \epsilon v}{t + \epsilon v} = 1 + 2 \epsilon \alpha_{v} + \epsilon^{2}\inner{v}{v}$$
    and
    $$\inner{t + \epsilon v}{t} = 1 + \epsilon \alpha_{v}.$$
    We can take $\epsilon$ sufficiently small so that both of the above expressions are positive. Thus $t \in \core(F) \neq \emptyset$. Now we show that $F^{*} \subseteq F$. If $v \in F^{*}$, then $\alpha_{v} = \inner{v}{t} \geq 0$ by definition. We need to show that $\inner{v}{v} \geq 0$. Suppose for contradiction that $\inner{v}{v} < 0$. We construct an $x \in F$ with $\inner{v}{x} < 0$. Notice that
    $$0 > \inner{v}{v} = \alpha_{v}^{2} + \inner{w_{v}}{w_{v}} = \alpha_{v}^{2} - \mathbf{n}(w_{v})^{2},$$
    so $\mathbf{n}(w_{v}) > \alpha_{v}$. Set $w' = w_{v}/\mathbf{n}(w_{v})$ so that $\inner{w'}{w'} = -1$. Set $x = t + w'$. Then $\inner{x}{t} = 1$ and $\inner{x}{x} = 1 - 1 = 0$ so $x \in F$. Finally, 
    $$\inner{v}{x} = \inner{\alpha_{v}t + w_{v}}{t + w'} = \alpha_{v} + \inner{w_{v}}{w'} = \alpha_{v} + \mathbf{n}(w_{v})\inner{w'}{w'} = \alpha_{v} - \mathbf{n}(w_{v}) < 0.$$
    This contradicts $v \in F^{*}$. Thus $\inner{v}{v} \geq 0$ meaning that $v \in F$ and $F^{*} \subseteq F$, so $F$ is self-dual.
    
    Conversely, suppose $F = F^{*}$ and that let $t \in \core(F)$ with $\norm{t} > 0$. We claim that $F = C_{t}^{+}(V)$. If $v \in F$ then $\inner{v}{v} = \norm{v}^{2} \geq 0$ and reverse Cauchy-Schwarz gives $\inner{v}{t} \geq 0$. Thus $F \subseteq C_{t}^{+}(V)$. Conversely, let $v \in C_{t}^{+}(V)$. If $x \in F \subseteq C_{t}^{+}(V)$ and the reverse Cauchy-Schwarz inequality holds on any future cone, $\inner{v}{x} \geq \norm{v}\norm{x} \geq 0$. Thus $v \in F^{*} = F$ so $C_{t}^{+}(V) = F$.
\end{proof}

\subsection{Order completeness and Lorentz--Hilbertianity}

In this final subsection, we establish a link between two completeness notions on $\spn(F)$, assuming throughout that it arises as a Lorentzian vector space a positively polarizable hyperbolically normed cone $(F,\norm{\cdot})$ satisfying the assumptions specified at the beginning of Subsection \ref{Subsection: Lorentzianityofinnerproduct} (recall that $F$ is always assumed to be a proper linear cone over $\R$). The first completeness notion is the usual one with respect to the positive definite Wick rotation inner product, while the other one is order-theoretic, using the fact that there is a natural partial order relation on $\spn(F)$.

\begin{Definition}[Lorentz--Hilbert space]\label{Def: Lorentz Hilbert}
    If $V$ is a Lorentzian vector space that is complete with respect to the induced form $(\cdot, \cdot)$ in some Lorentz decomposition, then we call $V$ a \textit{Lorentz--Hilbert space}.
\end{Definition}

\begin{Remark}[On Lorentz--Hilbert spaces]
\begin{enumerate}
    \item[]
    \item It is easily observed that if $V = \R t \oplus W = \R \overline{t} \oplus \overline{W}$ are two Lorentz decompositions of $V$, then the two resulting metric space structures on $V$ are isometric. Thus, the definition of Lorentz--Hilbert space is independent of any choice of Lorentz decomposition.
    \item If $V = \R t \oplus W$ is any Lorentz decomposition of $V$, then $V$ is a Lorentz--Hilbert space if and only if $W$ is a Hilbert space under the inner product $(\cdot, \cdot)|_{W \times W} = -\inner{\cdot}{\cdot}|_{W \times W}$.
\end{enumerate}
\end{Remark}

Recall that whenever $F$ is a proper linear cone over $\R$, there exists a reflexive, transitive and antisymmetric relation (i.e., a partial order) $\leq$ on $\spn(F)$ defined via $v \leq w$ if and only if $w - v \in F$.

\begin{Definition}[Sequential forward completeness]
    Let $V$ be a topological vector space and let $\leq$ be a partial order on $V$. We say that $V$ is \textit{sequentially forward complete} if every non-decreasing sequence $(v_{j})_{j \in \N}$ which is bounded above, converges in $V$.
\end{Definition}

The notion of \textit{sequential backward completeness} can be defined analogously. It corresponds precisely to sequential forward completeness of the reversed partial order.

\begin{Lemma}
    If $\spn(F)$ is Lorentzian then $(x, y) \geq 0$ for all  $x, y \in F$.
\end{Lemma}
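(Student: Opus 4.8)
The plan is to reduce the inequality to the ordinary Cauchy--Schwarz inequality for the positive-definite Wick rotation inner product, using that every element of $F$ is future-directed together with Lemma \ref{Lemma: alpha positive on future}.

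First I would set up the Lorentz decomposition $\spn(F) = \R t_* \oplus W$ with $W = (\R t_*)^{\perp}$ that underlies the Wick rotation $(\cdot,\cdot)$, and write $x = \alpha_x t_* + w_x$, $y = \alpha_y t_* + w_y$ for $x,y \in F$. Since $\norm{v} \geq 0$ and, by the reverse Cauchy--Schwarz inequality of Lemma \ref{Lemma: inner product properties}(ii), $\inner{v}{t_*} \geq \norm{v}\,\norm{t_*} \geq 0$ for all $v \in F$, both $x$ and $y$ lie in the future cone $C_{t_*}^{+}(\spn F)$. In particular $\alpha_x = \inner{x}{t_*} \geq 0$ and $\alpha_y \geq 0$, and Lemma \ref{Lemma: alpha positive on future} gives $\mathbf{n}(w_x) \leq \alpha_x$ and $\mathbf{n}(w_y) \leq \alpha_y$.

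Next I would expand the Wick rotation in this decomposition. Using $(\cdot,\cdot)|_{W \times W} = -\inner{\cdot}{\cdot}|_{W \times W}$ we have
$$(x,y) = \alpha_x \alpha_y - \inner{w_x}{w_y} = \alpha_x \alpha_y + (w_x, w_y),$$
and the ordinary Cauchy--Schwarz inequality for the positive-definite inner product $(\cdot,\cdot)$ on $W$ yields $(w_x, w_y) \geq -\mathbf{n}(w_x)\,\mathbf{n}(w_y)$. Combining these,
$$(x,y) \geq \alpha_x \alpha_y - \mathbf{n}(w_x)\,\mathbf{n}(w_y) \geq \alpha_x \alpha_y - \alpha_x \alpha_y = 0,$$
where the last step uses $0 \leq \mathbf{n}(w_x) \leq \alpha_x$ and $0 \leq \mathbf{n}(w_y) \leq \alpha_y$.

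There is no serious obstacle here; the argument is essentially the computation already carried out in the reverse Cauchy--Schwarz lemma, but with the sign of the $W$-contribution flipped so as to match the Wick rotation rather than the Lorentzian form $\inner{\cdot}{\cdot}$. The only point deserving care is that the timelike vector defining the Wick rotation be chosen so that $F$ lies in its future cone; this is automatic for the fixed $t_* \in F$, which is all that is needed.
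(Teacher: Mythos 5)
Your proposal is correct and follows essentially the same route as the paper: both use the Lorentz decomposition with respect to $t_*$, the ordinary Cauchy--Schwarz inequality for the Wick rotation restricted to $W$, and Lemma \ref{Lemma: alpha positive on future} to bound $\mathbf{n}(w_x)\mathbf{n}(w_y)$ by $\alpha_x\alpha_y$. The only difference is cosmetic: you spell out why $F \subseteq C_{t_*}^+(\spn F)$, which the paper takes from an earlier remark.
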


\begin{proof}
    Let $x, y \in F \subseteq C_{t_*}^+(\spn(F))$, where $\|t_*\| = 1$ and $\spn(F) = (\R t_*) \oplus W$. By the regular Cauchy-Schwarz inequality and Lemma $\ref{Lemma: alpha positive on future}$
    $$\inner{w_{x}}{w_{y}} = - (w_{x}, w_{y})\leq |(w_{x}, w_{y})| \leq \mathbf{n}(w_{x})\mathbf{n}(w_{y}) \leq \alpha_{x}\alpha_{y}$$
    which implies that $(x, y) = \alpha_{x}\alpha_{y} - \inner{w_{x}}{w_{y}} \geq 0.$
\end{proof}

\begin{Corollary}\label{Cor: Wick norm is monotone}
    The norm $\mathbf{n}$ is monotone on $F$, with respect to the order induced by $F$. That is, whenever $x,y \in F$ such that $x \leq y$ then $\mathbf{n}(x) \leq \mathbf{n}(y)$.
\end{Corollary}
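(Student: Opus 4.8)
The plan is to reduce the statement directly to the preceding Lemma together with the positive-definiteness of the Wick rotation $(\cdot,\cdot)$. Suppose $x,y \in F$ with $x \leq y$; by the definition of the order on $\spn(F)$ this means precisely that $z := y - x \in F$. The first step is to expand, using symmetry and bilinearity of $(\cdot,\cdot)$,
$$\mathbf{n}(y)^{2} = (y,y) = (x+z,\, x+z) = (x,x) + 2(x,z) + (z,z) = \mathbf{n}(x)^{2} + 2(x,z) + \mathbf{n}(z)^{2}.$$

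Next I would observe that both correction terms are nonnegative. Since $x, z \in F$, the preceding Lemma yields $(x,z) \geq 0$, and since $(\cdot,\cdot)$ is a positive-definite inner product we have $(z,z) = \mathbf{n}(z)^{2} \geq 0$. Combining these gives $\mathbf{n}(y)^{2} \geq \mathbf{n}(x)^{2}$, and as $\mathbf{n}(x), \mathbf{n}(y) \geq 0$ by definition, taking square roots yields $\mathbf{n}(x) \leq \mathbf{n}(y)$, as desired.

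I do not anticipate any genuine obstacle: the whole force of the corollary is concentrated in the inequality $(x,z) \geq 0$ for elements $x,z$ of $F$, which is exactly the content of the preceding Lemma, so the argument is a one-line consequence once $y$ is written as $x + z$ with $z \in F$.
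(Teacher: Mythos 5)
Your argument is correct and is essentially identical to the paper's proof: both write $y = x + z$ with $z = y - x \in F$, expand $\mathbf{n}(y)^2 = \mathbf{n}(x)^2 + 2(x,z) + \mathbf{n}(z)^2$, and invoke the preceding lemma for $(x,z) \ge 0$ together with nonnegativity of $\mathbf{n}(z)^2$. No differences worth noting.
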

\begin{proof}
    Let $x, z \in F$. Then
    $$\mathbf{n}(x + z)^{2} = (x + z, x + z) = \mathbf{n}(x)^{2} + \mathbf{n}(z)^{2} + 2(x, z).$$
    Since $\mathbf{n}(z), (x, z) \geq 0$, we get $\mathbf{n}(x + z) \geq \mathbf{n}(x)$. Taking $z = y - x \in F$ gives us the desired result.
\end{proof}

\begin{Remark}
    Recall that, given a norm $\mathbf{n}$, we can construct the norm $\tilde{\mathbf{n}}$ on $\spn(F)$. We showed in Lemma $\ref{Lemma: monotone norm implies norm can be extended}$ that $\tilde{\mathbf{n}}|_{F} = \mathbf{n}$.
\end{Remark}

In the next result, the interior is taken with respect to the topology of the norm $\mathbf{n}$ (which is defined on all of $\spn(F)$). 
\begin{Lemma}\label{Lemma: equivilence of norm with extension norm}
    If $\Int(F) \neq \emptyset$ then $\tilde{\mathbf{n}}$ and $\mathbf{n}$ are equivalent as norms on $\spn(F)$.
\end{Lemma}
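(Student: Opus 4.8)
The plan is to establish the two-sided bound $\mathbf{n} \le \tilde{\mathbf{n}} \le C\,\mathbf{n}$ on all of $\spn(F)$ for some constant $C > 0$; since $\tilde{\mathbf{n}}$ is already known to be a norm (by the Norm extension Proposition), this is precisely equivalence of $\mathbf{n}$ and $\tilde{\mathbf{n}}$. One inequality is free: for any decomposition $x = u - v$ with $u,v \in F$, the triangle inequality for $\mathbf{n}$ gives $\mathbf{n}(x) \le \mathbf{n}(u) + \mathbf{n}(v)$, and taking the infimum over all such decompositions yields $\mathbf{n}(x) \le \tilde{\mathbf{n}}(x)$ (the inequality already noted in the proof of Lemma \ref{Lemma: monotone norm implies norm can be extended}). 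So the entire content lies in the reverse bound $\tilde{\mathbf{n}}(x) \le C\,\mathbf{n}(x)$.

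To obtain it, I would convert the hypothesis $\Int(F) \neq \emptyset$ into the statement that $F$ contains an $\mathbf{n}$-ball around some point. Fix $p \in \Int(F)$ and $r > 0$ with $B_{\mathbf{n}}(p,r) \subseteq F$ (open ball in the $\mathbf{n}$-topology). The key observation is that whenever $\mathbf{n}(x) < r$, the element $p + x$ lies in $B_{\mathbf{n}}(p,r) \subseteq F$, so $x = (p+x) - p$ is an admissible difference of elements of $F$; by the definition of $\tilde{\mathbf{n}}$ together with the triangle inequality for $\mathbf{n}$, this gives $\tilde{\mathbf{n}}(x) \le \mathbf{n}(p+x) + \mathbf{n}(p) \le \mathbf{n}(x) + 2\mathbf{n}(p)$. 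This estimate is not yet homogeneous, but homogeneity of both norms repairs it: for arbitrary $x \neq 0$ I would apply the bound to $y = \tfrac{r}{2\mathbf{n}(x)}\,x$, which satisfies $\mathbf{n}(y) = r/2 < r$, and then rescale via $\tilde{\mathbf{n}}(x) = \tfrac{2\mathbf{n}(x)}{r}\,\tilde{\mathbf{n}}(y)$. This produces $\tilde{\mathbf{n}}(x) \le \bigl(1 + \tfrac{4\mathbf{n}(p)}{r}\bigr)\mathbf{n}(x)$, i.e. the desired bound with $C = 1 + 4\mathbf{n}(p)/r$.

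Combining the two inequalities gives $\mathbf{n}(x) \le \tilde{\mathbf{n}}(x) \le C\,\mathbf{n}(x)$ for every $x \in \spn(F)$, which is exactly equivalence of the two norms. I do not anticipate a serious obstacle: the only genuine idea is to turn ``$F$ has nonempty interior'' into the uniform decomposition $x = (p+x)-p$ valid for all sufficiently small $x$, and then to upgrade the resulting inhomogeneous estimate to a homogeneous one by scaling. (One can slightly improve the constant by using the symmetric decomposition $x = \tfrac12(p+x) - \tfrac12(p-x)$, valid since $p \pm x \in F$ and $F$ is closed under multiplication by $\tfrac12$, but this is merely cosmetic.)
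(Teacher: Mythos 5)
Your proof is correct and follows essentially the same route as the paper: both arguments convert $\Int(F)\neq\emptyset$ into a ball $B_{\mathbf n}(p,r)\subseteq F$, decompose a small $x$ as a difference of elements of $F$ near $p$, and then use homogeneity to upgrade the resulting inhomogeneous estimate to $\tilde{\mathbf n}\le C\,\mathbf n$. The paper in fact uses the symmetric decomposition $x=\frac{1}{2\epsilon}(s+\epsilon x)-\frac{1}{2\epsilon}(s-\epsilon x)$ that you mention in your closing parenthetical, arriving at the constant $1+2\mathbf n(s)/\delta$; the difference is purely cosmetic.
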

\begin{proof}
    For any $u, v \in F$ with $x = u - v$, we have $\mathbf n(x) \leq  \mathbf n(u) + \mathbf n(v)$. Taking the infimum over all such $u, v$, we get $\mathbf n(x) \leq \tilde{\mathbf{n}}(x)$. Now suppose that there exists some $s \in \Int(F)$. That is, there exists a $\delta > 0$ such that $v \in F$ for all $v \in \spn(F)$ with $\mathbf n(s - v) < \delta$. We can now show that $\tilde{\mathbf{n}}(x) \leq K \mathbf n(x)$ for all $x \in \spn(F)$ and some $K > 0$. If $\tilde{\mathbf{n}}(x) = 0$ then $x = 0$ and $\tilde{\mathbf{n}}(x) = 0$ so we are done. Otherwise, let $\epsilon = \frac{\delta}{2\mathbf n(x)}$, $y_{+} = s + \epsilon x$, $y_{-} = s - \epsilon x$. Since 
    $$\mathbf n(s - y_{+}) = \mathbf n(s - y_{-}) = \mathbf n(\epsilon x) = \frac{\delta}{2} < \delta,$$
    it follows that $y_{+}, y_{-} \in F$. Note that $y_{+} - y_{-} = 2\epsilon x$, so
    $$x = \frac{y_{+} - y_{-}}{2\epsilon}.$$
    Let $u = \frac{y_{+}}{2 \epsilon}$ and $v = \frac{y_{-}}{2\epsilon}$. This gives us $x = u - v$ with $u, v \in F$. By definition of $\mathbf p$,
    $$\tilde{\mathbf{n}}(x) \leq \mathbf n(u) + \mathbf n(v) = \frac{\mathbf n(y_{+}) + \mathbf n(y_{-})}{2\epsilon}.$$
    By the triangle inequality,
    $$\mathbf n(y_{+}) + \mathbf n(y_{-}) = \mathbf n(s + \epsilon x) + \mathbf n(s - \epsilon x) \leq 2\mathbf n(s) + 2\epsilon \mathbf n(x).$$
    Thus,
    $$\tilde{\mathbf{n}}(x) \leq \frac{\mathbf n(s) + \epsilon \mathbf n(x)}{\epsilon} = \frac{2\mathbf n(x) \mathbf n(s)}{\delta} + \mathbf n(x) = \Big(\frac{2\mathbf n(s)}{\delta} + 1\Big)\mathbf n(x).$$
    Taking $K = \frac{2n(s)}{\delta} + 1$ completes the proof.
\end{proof}

\begin{Lemma}
    Let $\tau_{\mathbf{n}}$ be the topology induced by the norm $\mathbf{n}$. Suppose $\Int_{\tau_{\mathbf{n}}}(F) \neq \emptyset$. If $F$ is given the subspace topology then $\tau_{\mathbf{n}} = \tau_{univ}$.
\end{Lemma}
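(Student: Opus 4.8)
The plan is to recognize this statement as the confluence of two results already established, both of which carry out the substantive work. The key preliminary observation is that, under the standing hypotheses of Subsection~\ref{Subsection: Lorentzianityofinnerproduct}, the Wick rotation $\mathbf{n}$ is a genuine norm on all of $\spn(F)$ (it arises from the positive definite inner product $(\cdot,\cdot)$). Hence $F$, endowed with the subspace topology of $\tau_{\mathbf{n}}$, is exactly the situation treated in Theorem~\ref{Theorem: normed cones quotient topology}, with that theorem's abstract norm taken to be $\mathbf{n}$.

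Concretely, I would argue in two steps. First, apply Theorem~\ref{Theorem: normed cones quotient topology} with the norm $\mathbf{n}$: since $F$ carries the subspace topology of $\tau_{\mathbf{n}}$, the theorem yields $\tau_{univ} = \tau_{quot} = \tau_{\tilde{\mathbf{n}}}$, where $\tilde{\mathbf{n}}$ is the extension norm built from $\mathbf{n}$ via the Norm extension proposition. In particular $\tau_{univ}$ is induced by $\tilde{\mathbf{n}}$, and no interior hypothesis is needed for this identification. Second, invoke the hypothesis $\Int_{\tau_{\mathbf{n}}}(F) \neq \emptyset$: this is precisely the assumption of Lemma~\ref{Lemma: equivilence of norm with extension norm}, which then tells us that $\mathbf{n}$ and $\tilde{\mathbf{n}}$ are equivalent norms on $\spn(F)$. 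Equivalent norms induce the same topology, so $\tau_{\tilde{\mathbf{n}}} = \tau_{\mathbf{n}}$. Chaining the two identities gives $\tau_{univ} = \tau_{\tilde{\mathbf{n}}} = \tau_{\mathbf{n}}$, as claimed.

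There is no serious obstacle here, since the two cited results bear the entire burden; the only point requiring care is the logical role of the nonemptiness of the interior. One always has $\mathbf{n} \leq \tilde{\mathbf{n}}$ pointwise (as used in Lemma~\ref{Lemma: monotone norm implies norm can be extended}), which by itself only yields the inclusion $\tau_{\mathbf{n}} \subseteq \tau_{univ}$; it is exactly the reverse bound $\tilde{\mathbf{n}} \leq K\mathbf{n}$, supplied by the interior hypothesis through Lemma~\ref{Lemma: equivilence of norm with extension norm}, that upgrades this inclusion to the equality $\tau_{\mathbf{n}} = \tau_{univ}$. I would therefore make sure to flag explicitly that the nonemptiness of $\Int_{\tau_{\mathbf{n}}}(F)$ is used solely to pass from comparability to equivalence of the two norms.
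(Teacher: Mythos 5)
Your proposal is correct and follows exactly the paper's own argument: Theorem~\ref{Theorem: normed cones quotient topology} identifies $\tau_{univ}$ with the topology of $\tilde{\mathbf{n}}$, and Lemma~\ref{Lemma: equivilence of norm with extension norm} (via the nonempty-interior hypothesis) shows $\tilde{\mathbf{n}}$ and $\mathbf{n}$ are equivalent, hence induce the same topology. Your additional remark pinpointing that the interior hypothesis is needed only for the bound $\tilde{\mathbf{n}} \leq K\mathbf{n}$ is accurate and a useful clarification.
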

\begin{proof}
    By Theorem $\ref{Theorem: normed cones quotient topology}$, the universal topology $\tau_{univ}$ is induced by the norm $\tilde{\mathbf{n}}$. By Lemma $\ref{Lemma: equivilence of norm with extension norm}$,  $\tilde{\mathbf{n}}$ and $\mathbf{n}$ induce the same topology.
\end{proof}

Our last main theorem provides an equivalence between order-theoretic completeness and Lorentz--Hilbertianity. Similar results in the more general context of Lorentzian products over normed spaces are due to Gigli \cite{gigli2025hyperbolic2}.

\begin{Theorem}[Order-completeness and Lorentz--Hilbertianity]\label{Theorem: Sequential foward completeness and Lorentz Hilbert}
Suppose that $\spn(F)$ is a Lorentzian vector space. If $\spn(F)$ is sequentially forward complete with respect to the topology induced by $\mathbf n$ and $\Int(F) \neq \emptyset$, then $\spn(F)$ is a Lorentz--Hilbert space. Conversely, if $\spn(F)$ is a Lorentz--Hilbert space then it is sequentially forward complete.
\end{Theorem}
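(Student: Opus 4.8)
The plan is to reduce both implications to statements about the positive-definite Wick norm $\mathbf n$, using that, by Definition~\ref{Def: Lorentz Hilbert} together with the subsequent remark, $\spn(F)$ is a Lorentz--Hilbert space if and only if $(\spn(F),\mathbf n)$ is complete as a normed space. Throughout I fix a Lorentz decomposition $\spn(F) = \R t_* \oplus W$ and work with the order $v \leq w \Leftrightarrow w - v \in F$. I would treat the (easier) converse direction first and the forward direction second.

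For the converse, assume $(\spn(F),\mathbf n)$ is complete and let $(v_j)_j$ be non-decreasing and bounded above by some $z$. Setting $u_j := v_j - v_1 \in F$, the sequence $(u_j)$ is still non-decreasing, and $u_j \leq z' := z - v_1 \in F$. By Corollary~\ref{Cor: Wick norm is monotone} ($\mathbf n$ is monotone on $F$), $(\mathbf n(u_j))_j$ is non-decreasing and bounded above by $\mathbf n(z')$, hence converges in $\R$. For $m \geq j$ we have $u_m - u_j \in F$, so expanding $\mathbf n(u_m)^2 = (u_j + (u_m - u_j),\, u_j + (u_m - u_j))$ and using that the Wick product is nonnegative on $F \times F$ (the Lemma preceding Corollary~\ref{Cor: Wick norm is monotone}) gives
\[
\mathbf n(u_m - u_j)^2 = \mathbf n(u_m)^2 - \mathbf n(u_j)^2 - 2(u_j,\, u_m - u_j) \leq \mathbf n(u_m)^2 - \mathbf n(u_j)^2.
\]
Since $(\mathbf n(u_j)^2)_j$ is Cauchy in $\R$, the right-hand side tends to $0$, so $(u_j)$ is $\mathbf n$-Cauchy and therefore converges; hence so does $(v_j)$.

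For the forward direction, assume $\spn(F)$ is sequentially forward complete and $\Int(F) \neq \emptyset$, and let $(x_n)$ be $\mathbf n$-Cauchy; I pass to a subsequence with $c_k := \mathbf n(x_{n_{k+1}} - x_{n_k}) < 2^{-k}$, so $C := \sum_k c_k < \infty$ (the case $C = 0$ being trivial). Fix $s \in \Int(F)$ and $\delta > 0$ with the $\mathbf n$-ball $B(s,\delta) \subseteq F$; the key elementary fact is that $\lambda s + v \in F$ whenever $\lambda > 0$ and $\mathbf n(v) < \lambda\delta$ (divide by $\lambda$, use $s + v/\lambda \in F$, then rescale). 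Using this, set $M_k := \tfrac{2}{\delta}\sum_{j<k} c_j$, $M_\infty := \tfrac{2C}{\delta}$, and $y_k := x_{n_k} + M_k s$. Then $y_{k+1} - y_k = (M_{k+1}-M_k)s + (x_{n_{k+1}}-x_{n_k}) \in F$ because $\mathbf n(x_{n_{k+1}}-x_{n_k}) = c_k < 2c_k = (M_{k+1}-M_k)\delta$, so $(y_k)$ is non-decreasing. Moreover $z := x_{n_1} + (M_\infty + \tfrac{2C}{\delta})s$ bounds $(y_k)$ above, since $z - y_k = (x_{n_1}-x_{n_k}) + (M_\infty - M_k + \tfrac{2C}{\delta})s \in F$, using $\mathbf n(x_{n_1}-x_{n_k}) \leq C < 2C \leq (M_\infty - M_k + \tfrac{2C}{\delta})\delta$.

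By sequential forward completeness, $(y_k)$ converges in $\mathbf n$; since $M_k \to M_\infty$ in $\R$, the subsequence $x_{n_k} = y_k - M_k s$ converges, and because $(x_n)$ is Cauchy with a convergent subsequence it converges. Thus $(\spn(F),\mathbf n)$ is complete, i.e.\ $\spn(F)$ is a Lorentz--Hilbert space. I expect the main obstacle to be precisely this construction in the forward direction: one must manufacture a monotone, order-bounded sequence out of an arbitrary Cauchy sequence, and the verification that $(y_k)$ is bounded above --- controlling all accumulated increments by a single fixed multiple of the interior point $s$ --- is exactly where the hypothesis $\Int(F) \neq \emptyset$ is essential.
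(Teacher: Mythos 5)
Your proof is correct, and both halves take a genuinely different route from the paper's. For the converse direction, the paper decomposes $v_k = \alpha_{v_k}t_* + w_{v_k}$ along the Lorentz splitting and argues componentwise: $(\alpha_{v_k})$ converges as a monotone bounded real sequence, and $(w_{v_k})$ is Cauchy via a telescoping estimate from Lemma \ref{Lemma: alpha positive on future}. You instead work intrinsically with the Wick inner product, using its nonnegativity on $F \times F$ to get $\mathbf n(u_m - u_j)^2 \leq \mathbf n(u_m)^2 - \mathbf n(u_j)^2$ for $j \leq m$; this never touches the decomposition and is arguably cleaner. For the forward direction, the paper first passes to the extension norm $\tilde{\mathbf{n}}$ (invoking the norm equivalence of Lemma \ref{Lemma: equivilence of norm with extension norm}, which is where $\Int(F) \neq \emptyset$ enters for them), writes each increment of the subsequence as $u_k - v_k$ with $u_k, v_k \in F$ of summably small norm, and then order-bounds the two monotone partial-sum sequences $U_k$, $V_k$ by a fixed multiple of the interior point $s$. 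You instead make the Cauchy subsequence itself monotone by adding the drift $M_k s$, which uses $\Int(F) \neq \emptyset$ directly through the absorption fact $\mathbf n(v) < \lambda\delta \Rightarrow \lambda s + v \in F$ and bypasses $\tilde{\mathbf{n}}$ entirely. Both constructions hinge on the same mechanism --- absorbing small-norm vectors into $F$ near an interior point --- but yours is more self-contained, at the cost of slightly more delicate bookkeeping of the constants $M_k$ and of the degenerate cases $c_k = 0$ and $C = 0$, which you do handle.
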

\begin{proof}
    Suppose $\spn(F)$ is a Lorentzian vector space such that $\Int(F) \neq \emptyset$ in the topology induced by $\mathbf n$. Assume that $\spn(F)$ is forward complete and that $(x_{k}) \subseteq \spn(F)$ is an $\mathbf n$-Cauchy sequence. By Lemma $\ref{Lemma: monotone norm implies norm can be extended}$, $(x_{k})$ is also $\tilde{\mathbf{n}}$-Cauchy. Thus we can find a subsequence $(y_{k})$ such that $\mathbf p(y_{k + 1} - y_{k}) < 2^{-k}$. That is, for each $k$, there are $u_{k}, v_{k} \in F$ with $y_{k + 1} - y_{k} = u_{k} - v_{k}$ and 
    $\mathbf n(u_{k}) + \mathbf n(v_{k}) < 2^{-k}$. This means that the series $\sum_{k} u_{k}$ and $\sum_{k} u_{k}$ are both absolutely convergent (with respect to both norms $\tilde{\mathbf{n}}$ and $\mathbf n$). Let $V_{k}, U_{k}$ be the partial sums
    $$V_{k} = \sum_{j = 0}^{k}v_{j}, \quad U_{k} = \sum_{j = 0}^{k}u_{j}.$$
    It follows that both $(V_{k})$ and $(U_{k})$ are $\mathbf n$-Cauchy and hence there exists some $M > 0$ such that $\mathbf n(V_{k}), \mathbf n(U_{k}) < M$ for all $k$. Let $s \in \Int(F)$ and let $\delta > 0$ be such that $B_{\delta}(s) \subseteq F$. Let $V'_{k} = \frac{\delta}{M}V_{k}$. Then
    $$\mathbf n(V'_{k}) = \frac{\delta}{M}\mathbf n(V_{k}) < \delta,$$
    so $s - V'_{k} \in B_{\delta}(s) \subseteq F$. Hence $V'_{k} \leq s$. It follows that $V_{k} \leq \frac{M}{\delta} s$. A similar argument shows that $U_{k} \leq \frac{M}{\delta} s$. Hence, $(V_{k})$ and $(U_{k})$ are non-decreasing, order-bounded sequences. By assumption, they converge to $V$ and $U$ respectively. Thus,
    $$y_{k} = y_{0} + \sum_{j = 0}^{k}(y_{j + 1} - y_{j}) = y_{0} + \sum_{j = 0}^{k}(u_{k} - v_{k}) = y_{0} + U_{k} - V_{k} \to y_{0} + U - V.$$
    Since the original sequence $(x_{k})$ is $\mathbf n$-Cauchy and has an convergent subsequence, it must itself be convergent. 

    Now we prove the converse. Suppose that $\spn(F)$ is a Lorentz-Hilbert space and let $(v_{k}) \subseteq \spn(F)$ be a non-decreasing sequence bounded above by some $y \in \spn(F)$. For each $k$, we have $v_{k + 1} - v_{k} \in F$. By Lemma $\ref{Lemma: alpha positive on future}$,
    $$\mathbf n(w_{v_{k + 1}} - w_{v_{k}}) \leq \alpha_{v_{k + 1}} - \alpha_{v_{k}}.$$
    This implies that $\alpha_{v_{k + 1}} \geq \alpha_{v_{k}}$ so $(\alpha_{v_{k}})$ is also a non-decreasing sequence. By the same logic, $v_{k} \leq y$ implies that $\alpha_{v_{k}} \leq \alpha_{y}$. Similarly, $\mathbf n(w_{y} - w_{v_{k}}) \leq \alpha_{y} - \alpha_{v_{k}}$. By the triangle inequality,
    $$\mathbf n(w_{v_{k}}) \leq \mathbf n(w_y) + \mathbf n(w_{y} - w_{v_k}) \leq \mathbf n(w_y) + \alpha_{y} - \alpha_{v_{k}} \leq \mathbf n(w_y) + \alpha_{y} - \alpha_{0}.$$
    Hence $(\mathbf n(w_{v_{k}}))$ is bounded above. Since $(\alpha_{v_{k}})$ is a bounded nondecreasing sequence of real numbers, it converges. Similarly, $(w_{v_{k}})$ is a Cauchy sequence because for all $k < j$
    $$\mathbf n(w_{j} - w_{k}) = \mathbf n\left(\sum_{i = k}^{j - 1}(w_{i + 1} - w_{i})\right) \leq \sum_{i = k}^{j - 1} \mathbf n(w_{i + 1} - w_{i}) \leq \sum_{i = k}^{j - 1}(\alpha_{i + 1} - \alpha_{i}) = \alpha_{j} - \alpha_{k} \to 0.$$
    Since $(\spn(F),\mathbf n)$ is complete by assumption, $(w_{k})$ converges. Consequently, $(v_{k})$ converges.
\end{proof}

We remark that $\spn(F)$ being a Lorentz--Hilbert space is also equivalent with sequential backward completeness, since the latter is imply forward completeness with respect to the relation $x \leq y$ if and only if $x - y \in F$.

\begin{Remark}[Krein spaces]

It is noteworthy that Lorentz--Hilbert spaces in the sense of our Definition \ref{Def: Lorentz Hilbert} are examples of Krein spaces, which are more general examples of inner product spaces admitting a direct sum decomposition such that both summands are Hilbert spaces with respect to the (negative of the) restriction of the indefinite inner product. In that setting, what we call Wick rotation norm and denote by $\mathbf n$ is known as a $J$-norm. We refer to Sorjonen \cite{Pekka} (also the classical text of Bognar \cite{Bognar}) for more details.
\end{Remark}

\section{Outlook and open problems}\label{Section: Outlook}

In this article, we have investigated linear cones $F$ over totally ordered fields $\mathbb{K}$. We have shown that $F$ embeds into a unique $\mathbb{K}$-vector space satisfying a universal property, called $\spn(F)$. Moreover, if $F$ is topological, then there is a universal topology on $\spn(F)$. We have studied positively polarizable hyperbolic norms on linear cones over $\R$, and given sufficient conditions under which the induced inner product on $\spn(F)$ is Lorentzian. Finally, we have linked the completeness under the Wick rotation of this inner product with order-theoretic completeness.

There are many open problems in the context of hyperbolic norms, a lot of which are certain to be addressed by Gigli in \cite{gigli2025hyperbolic2}. A significant one is to link the notion of positive polarizability (Definition \ref{Definition: positivepolarizability}) and infinitesimal Minkowskianity introduced in \cite[Def.\ 1.4]{beran2024nonlinear}, where the latter is a polarization identity involving the maximal weak subslopes $|df|$ of functions which are order-monotone with respect to the order structure of $\spn(F)$ (see \cite[Sec.\ 3.3]{beran2024nonlinear}). If $\spn(F)$ is finite-dimensional, then the equivalence between these two competing notions of polarizability is implicitly provided by the compatibility result proven in \cite[Thm.\ A.2]{beran2024nonlinear}. However, in infinite dimensions, one would need to establish a relationship between common notions of differentials of functions (more specifically, the hyperbolic norm of such) and the maximal weak subslope. Moreover, of similar interest is the question of timelike curvature-dimension coditions holding on infinite dimensional $\spn(F)$ arising from hyperbolically normed cones. Also, a causal theoretic analysis of $\spn(F)$ (arising from $(F,\norm{\cdot})$) would be of importance (in finite dimensions, $\spn(F)$ is always globally hyperbolic, see \cite[Prop.\ A.10]{beran2024nonlinear}). These questions and related ones will surely become more tractable once the proper Lorentzian functional-analytic theory of hyperbolic Banach spaces \cite{gigli2025hyperbolic2} is available.

Let us also note that hyperbolic norms could provide a way to study Lorentz--Finsler structures on manifolds which are nonsmooth, simply as maps on the tangent bundle whose restriction to each tangent space (or suitable tangent cones) is a hyperbolic norm. Such definitions of lower regularity Finsler metrics are customary in positive definite signature. This type of definition would likely put low regularity Finsler spacetimes directly into the framework of cone structures due to Minguzzi \cite{minguzzi2019causality}, whose theory is well-developed.

\section*{Acknowledgments}

The second author is grateful for the hospitality of the Department of Mathematics of the University of Vienna during his research visit in the summer of 2025 during the course of the 2nd Workshop on "A new geometry for Einstein's theory of relativity and beyond". The authors would like to thank Nicola Gigli for his comments on a preliminary version of this manuscript.

This research was funded in part by the Austrian Science Fund (FWF) [Grant DOI\\10.55776/EFP6 and 10.55776/J4913]. For open access purposes, the authors have applied a CC BY public copyright license to any author accepted manuscript version arising from this submission.

\addcontentsline{toc}{section}{References}
\bibliography{Bibliography} 

\begin{thebibliography}{1}

\bibitem{beran2024nonlinear}
{\sc Beran, T., Braun, M., Calisti, M., Gigli, N., McCann, R.~J., Ohanyan, A., Rott, F., and S{\"a}mann, C.}
\newblock A nonlinear d'{A}lembert comparison theorem and causal differential calculus on metric measure spacetimes.
\newblock {\em arXiv:2408.15968\/} (2024).

\bibitem{Bognar}
{\sc Bogn\'ar, J.}
\newblock {\em Indefinite inner product spaces}, vol.~Band 78 of {\em Ergebnisse der Mathematik und ihrer Grenzgebiete [Results in Mathematics and Related Areas]}.
\newblock Springer-Verlag, New York-Heidelberg, 1974.

\bibitem{braun2025new}
{\sc Braun, M.}
\newblock New perspectives on the d’{A}lembertian from general relativity. {A}n invitation.
\newblock {\em Indagationes Mathematicae, to appear\/} (2025).

\bibitem{BrunsGubeladze}
{\sc Bruns, W., and Gubeladze, J.}
\newblock {\em Polytopes, rings, and {$K$}-theory}.
\newblock Springer Monographs in Mathematics. Springer, Dordrecht, 2009.

\bibitem{gigli2025hyperbolic2}
{\sc Gigli, N.}
\newblock Hyperbolic {B}anach spaces.
\newblock {\em in preparation\/}.

\bibitem{gigli2025hyperbolic}
{\sc Gigli, N.}
\newblock Hyperbolic {B}anach spaces {I}--{D}irected completion of partial orders.
\newblock {\em arXiv:2503.10467\/} (2025).

\bibitem{MacLane}
{\sc MacLane, S.}
\newblock {\em Categories for the working mathematician}, vol.~Vol. 5 of {\em Graduate Texts in Mathematics}.
\newblock Springer-Verlag, New York-Berlin, 1971.

\bibitem{minguzzi2019causality}
{\sc Minguzzi, E.}
\newblock Causality theory for closed cone structures with applications.
\newblock {\em Reviews in Mathematical Physics 31}, 05 (2019), 1930001.

\bibitem{Pekka}
{\sc Sorjonen, P.}
\newblock On linear relations in an indefinite inner product space.
\newblock {\em Ann. Acad. Sci. Fenn. Ser. A I Math. 4}, 1 (1979), 169--192.

\end{thebibliography}
\bibliographystyle{acm}

\end{document}